\documentclass[a4paper,12pt]{article}
\usepackage{amsmath}
\usepackage{amssymb}
\usepackage{tabularx}
\usepackage{enumerate}
\usepackage{graphicx}
\usepackage{indentfirst}
\usepackage{subfigure}
\usepackage{epsfig}
\usepackage{graphics}
\usepackage{cases}
\usepackage[compress]{cite}
\usepackage{txfonts}
\usepackage{geometry}
 \usepackage{epstopdf}
 \usepackage{lineno}
 \usepackage{color}
 \usepackage{float}
\usepackage{multirow}

\newtheorem{thm}{Theorem}[section]
\newtheorem{lem}[thm]{Lemma}

\newtheorem{exm}{Example}[section]

\newtheorem{defi}{Definition}[section]
\newtheorem{pppp}{Proof}
\newtheorem{case}{Case}[section]

\newcommand{\qed}{\hspace{1em}\mbox{\raisebox{0.65ex}{\fbox{}}}}

\numberwithin{equation}{section}

\newcommand{\be}{\begin{equation}}
\newcommand{\ee}{\end{equation}}
\newcommand\bes{\begin{eqnarray}} \newcommand\ees{\end{eqnarray}}
\newcommand{\bess}{\begin{eqnarray*}}
\newcommand{\eess}{\end{eqnarray*}}

\newcommand{\epf}{\mbox{}\hfill $\Box$}

\begin{document}

\thispagestyle{empty}

\title{Dengue fever model with impulsive intervention in a periodically varying environment\thanks{The work is partially supported by the NNSF of China (Grant No. 12271470).}}

\date{\empty}

\author{Han Zhang$^1$, Inkyung Ahn$^{2}$ and Zhigui Lin$^1 \thanks{Corresponding author. Email: zglin@yzu.edu.cn (Z. Lin).}$\\
{\small 1 School of Mathematical Science, Yangzhou University, Yangzhou 225002, China}\\
{\small 2 Department of Applied Mathematical Sciences, Korea University, Sejong 30019, Republic of Korea}}

 \maketitle
\begin{quote}
\noindent
{\bf Abstract.} { \footnotesize\small
Pulse interventions represent a highly effective measure for infection control, as they influence disease transmission through short-term actions. In addition, the habitat ranges of dengue vectors and hosts exhibit periodic variations driven by environmental and climatic factors. To investigate the effects of impulsive interventions and domain evolution on disease transmission, we propose a dengue fever reaction-diffusion model that incorporates impulsive perturbations in a periodically varying domain. By applying the Poincar$\acute{e}$ map and the Krein-Rutman theorem, we establish the existence of the principal eigenvalue for the periodic eigenvalue problem with impulses, thereby extending previous studies on reaction-diffusion equations in fixed domains without impulsive effects. Sufficient conditions governing the long-term dynamics of periodic solutions are derived using the comparison principle and monotone iteration theory. Numerical simulations corroborate the theoretical findings and elucidate the effects of impulsive-intervention intensity and periodic domain evolution on disease transmission patterns. Our results indicate that increasing the intensity of pulse interventions suppresses disease transmission, whereas a larger magnitude of domain variation impedes disease control.
}

\noindent {\it MSC:} 35K57, 
35R12, 
 92D30 

\medskip
\noindent {\it Keywords:} Dengue fever model; Evolving domain; Impulsive intervention; Principal eigenvalue; Vanishing and spreading
\end{quote}

\section{Introduction and main results}
Infectious diseases have persisted as a grave threat to global public health. Despite steady medical advances, factors such as population migration, urbanization, and environmental pollution have amplified the spread of these diseases. Outbreaks such as SARS, H7N9 influenza, Cholera, and the recent COVID-19 pandemic have each inflicted heavy death tolls, economic damage, and societal disruption\cite{Sharif2024}.

As one of the many infectious diseases, dengue fever poses a serious threat. Dengue fever is a viral infection primarily transmitted by \textit{Aedes aegypti} and \textit{Aedes albopictus} mosquitoes. Severe dengue can lead to shock, respiratory distress, severe bleeding, and even death. Half of the global population is at risk of dengue virus infection\cite {Dengue2025}. In recent decades, the distribution of disease vectors has shifted due to factors such as El Ni$\tilde{n}$o and climate change, which leads indirectly to a significant increase in the global incidence of dengue fever. For example, the number of global dengue cases reported by the World Health Organization (WHO) increased from 505,430 in 2000 to 5.2 million in 2019\cite{Dengue2023-1,Dengue2023}. Based on global dengue surveillance data \cite{WHO2026}, the global numbers of reported total, confirmed, severe, and fatal cases from 2021 to 2025 are plotted, with the results shown in Fig.~1(a)-(b). The data reveal an overall worsening trend in the global dengue epidemic. Especially in 2024, total reported cases surged to 14 million, with fatalities nearing 10,000, which underscores the particular severity of the outbreak that year. Apart from monitoring and prevention, therefore, the formulation and application of mathematical models for predicting outbreak dynamics hold practical significance.
\begin{figure}[htbp]
\centering
\subfigure[\scriptsize Graph of total cases and confirmed cases
]{ {
\includegraphics[width=0.9\textwidth]{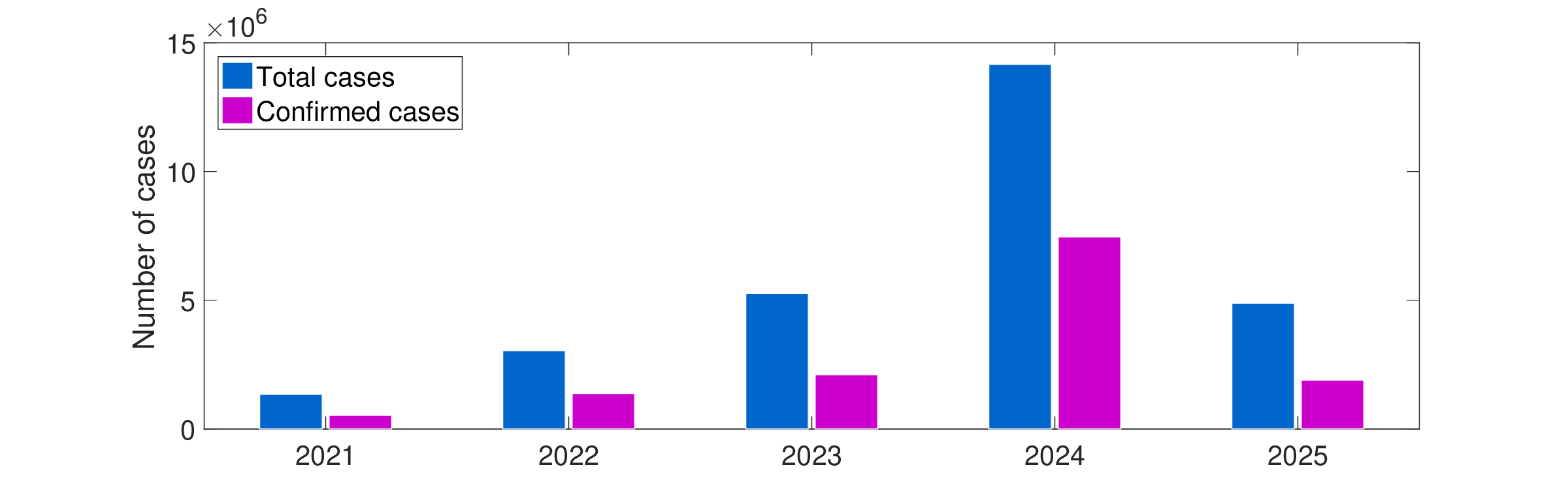}
} }
\subfigure[\scriptsize Graph of severe cases and total deaths]{ {
\includegraphics[width=0.9\textwidth]{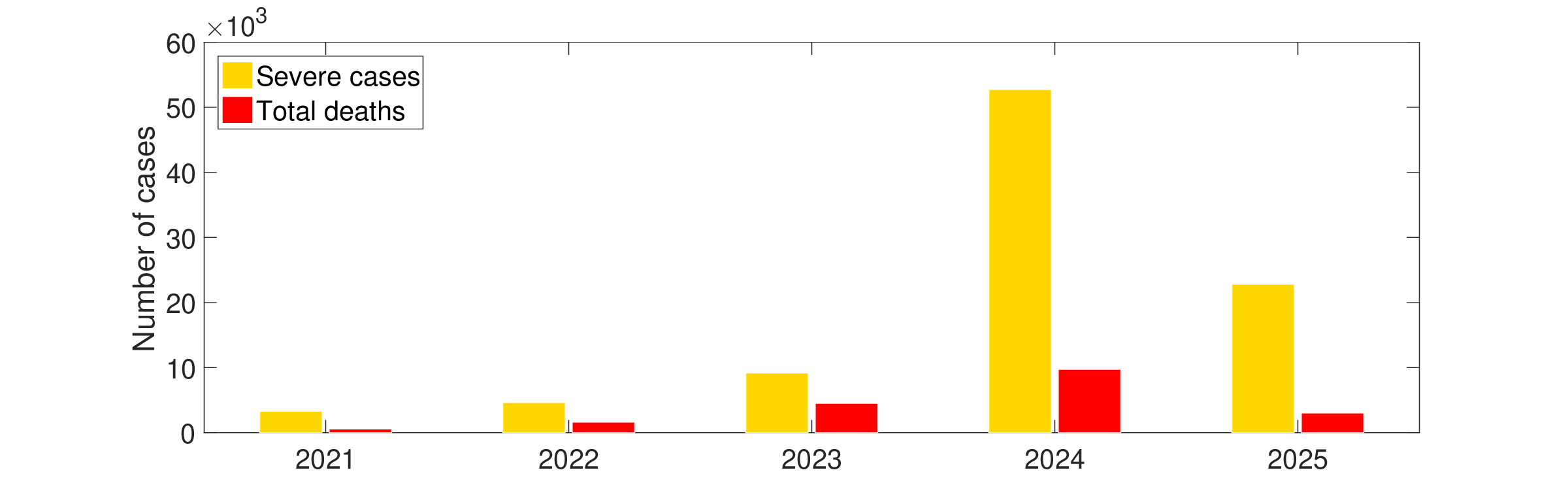}
} }
\renewcommand{\figurename}{\scriptsize Fig.}
\caption{\scriptsize Global reported dengue cases, 2021-2025. Data source: https://worldhealthorg.shinyapps.io/dengue\_global/.}
\end{figure}

Mathematical models have become an indispensable tool in public health research by revealing disease transmission mechanisms, identifying critical factors, formulating prevention and control strategies, and assessing epidemic dynamics\cite {Li2018}. To provide a crucial theoretical foundation for the effective management and ultimate eradication of dengue fever, the development and analysis of mathematical models are of substantial practical significance. In 1970, Fischer and Halstead first employed a mathematical model to investigate dengue fever, aiming to predict the risk of reinfection among different age groups under ``multi-serotype exposure''\cite{Fischer1970}. Over the decades that followed, many scholars have further developed mathematical models of dengue fever. In\cite{FengZhilan1997}, for instance, a dual-strain dengue model was formulated, with emphasis on the dynamical mechanisms governing competition-driven exclusion versus short-term coexistence of the virus strains. A non-autonomous dengue model was proposed in \cite{Coutinho2005} to simulate seasonal transmission dynamics, in which all dynamical features are governed by an approximate transmission-threshold condition. A dengue model with nonlocal time delays and spatial dispersal was developed in \cite{wangwendi2011}. By defining the basic reproduction number $R_0$ as a threshold parameter, it was shown that whether the disease dies out or persists depends on the magnitude of $R_0$. In addition, a classical SIR-SI dengue model was presented in \cite{Tewa2009}.

By simplifying the SIR-SI model, Zhu, Lin, and Zhang\cite{zhumin2018} considered the following dengue model
\begin{eqnarray}
\left\{ \begin{array}{l}
{I'_H} = \frac{{{\beta _H}b}}{{{N_H} + L}}(\frac{{{b_H}{N_H}}}{{{m_H}}} - {I_H}){I_V} - {m_H}{I_H} - {r_H}{I_H},\\
{I'_V} = \frac{{{\beta _V}b}}{{{N_H} + L}}(\frac{\Lambda }{{{m_V}}} - {I_V}){I_H} - {m_V}{I_V},\\
0 < {I_H}(0) < \frac{{{b_H}{N_H}}}{{{m_H}}},\;0 < {I_V}(0) < \frac{\Lambda }{{{m_V}}},
\end{array} \right.
\label{a03}
\end{eqnarray}
where variables and parameters for model (\ref{a03}) are defined in Table 1. They investigated the spatial dispersal and domain evolution.\vspace{-0.5\baselineskip}
\begin{table}[htbp]
\renewcommand{\arraystretch}{1.1}
  \centering
  \setlength{\belowcaptionskip}{10pt}
   {\caption{\; The definitions of all the variables or parameters in system (\ref{a03})}}
  \scalebox{1}{
 \begin{tabular}{l p{10cm}}
    \hline
    {\bf Variables or parameters} & {\bf Definitions} \\
    \hline
    \multirow{2}{*}{$I_H, I_V$}      & The densities of the infected human and mosquito populations at time $t$, respectively; \\
    $N_H$ & The total population size of humans; \\
    $\Lambda$      & The recruitment rate of mosquitoes; \\
    $b$     & The mean rate of mosquito bites per mosquito; \\
    $L$     & The density of other hosts (poultry and livestock); \\
    $m_H,m_V$     & The mortality rate of human and mosquito, respectively; \\    $b_H,r_H$     & The birth and recovery rate of human, respectively. \\     \multirow{2}{*}{ $\beta_H,\beta_V$}     & The contact transmission probabilities from infectious mosquitoes to susceptible humans, and from infectious humans to susceptible mosquitoes, respectively. \\
    \hline
  \end{tabular}
  }
  \label{tu1}
\end{table}

Since mosquito survival and reproduction are influenced by seasonal factors such as temperature and humidity, their distribution ranges exhibit periodic variation \cite {Zitko2014}. In\cite{liyuepeng2023}, for example, Li et al. protracted monthly distribution changes for three \textit{Aedes} mosquito species (\textit{Aedes aegypti}, \textit{Aedes albopictus}, and
\textit{Aedes vexans}) and observed that their distribution ranges display a periodic pattern. Such periodic variation in mosquito distribution is a major factor driving the periodic transmission patterns of dengue fever. Consequently, integrating domain evolution into models is now an important direction for understanding the spread of infections. For instance, a dengue model that incorporates regional periodic changes is developed in \cite{zm2019} to explore how such changes affect the transmission of dengue fever. In\cite{Wangjie2025}, Wang et al. analyzed a May-Nowak type degenerate system on a periodically varying domain and revealed the fundamental mechanisms of viral propagation. Beyond epidemic models, research has been conducted on population dynamics across various domains, as can be found in \cite{Adam2019,Fangjian2024,zhanghan2025}.

In addition to seasonal changes, impulsive interventions (e.g., targeted vector suppression, deployment of Wolbachia-carrying male mosquitoes, or emergency vaccination) also play a critical role in regulating dengue transmission \cite{Wijaya2021}. Thus, it is necessary to consider the impact of impulsive disturbances during model development. The impulsive reaction-diffusion model proposed by Lewis and Li in \cite{Lewis2012} is considered a groundbreaking contribution to impulsive theory in biomathematics and provided a foundational framework for subsequent developments in this direction. For an SIRS epidemic model, Nie et al.\cite{Nie2013} introduced a state-dependent impulsive control mechanism and proved the existence and orbital stability of a positive periodic solution. In \cite{Fazly2017}, Fazly and Lewis developed impulsive differential equations on a higher-dimensional domain $\Omega\in R^N(N\ge 0)$, which extends the results of \cite{Lewis2012}. Zhang et al.\cite{zhangYuron2024} developed an impulsive reaction-diffusion model with environmental variations and proved the existence of traveling wave solutions as well as the asymptotic spreading properties of the populations. By developing a nonlocal impulsive diffusion model, Lu and Wang\cite{Lu2026} conducted an in-depth study on the spatial distribution and propagation dynamics of populations characterized by synchronous reproduction and two-stage diffusion.

Motivated by the works above, we take into account a periodically evolving domain along with an impulsive intervention, and thus extend model (\ref{a03}) to the following system:
{\small \begin{eqnarray}
\left\{ {\begin{array}{*{20}{l}}
{\frac{{\partial {I_H}}}{{\partial t}} + {\bf{a}} \cdot \nabla {I_H} + {I_H}(\nabla  \cdot {\bf{a}}) = {d_H}\Delta {I_H} + \frac{{{\beta _H}b}}{{{N_H} + L}}(\frac{{{b_H}{N_H}}}{{{m_H}}} - {I_H}){I_V}}&{}\\
{\quad \quad \quad \quad \quad \quad \quad \quad \quad \quad \quad  - {m_H}{I_H} - {r_H}{I_H},}&{x(t) \in {\Omega _t},t \in ({{(n\tau )}^ + },(n + 1)\tau ],}\\
{\frac{{\partial {I_V}}}{{\partial t}} + {\bf{a}} \cdot \nabla {I_V} + {I_V}(\nabla  \cdot {\bf{a}}) = {d_V}\Delta {I_V} + \frac{{{\beta _V}b}}{{{N_H} + L}}(\frac{\Lambda }{{{m_V}}} - {I_V}){I_H}}&{}\\
{\quad \quad \quad \quad \quad \quad \quad \quad \quad \quad \quad  - {m_V}{I_V},}&{x(t) \in {\Omega _t},t \in ({{(n\tau )}^ + },(n + 1)\tau ],}\\
{{I_H}(x(t),{{(n\tau )}^ + }) = {I_H}(x(t),n\tau ),}&{x(t) \in {\Omega _t},}\\
{{I_V}(x(t),{{(n\tau )}^ + }) = P({I_V}(x(t),n\tau )),}&{x(t) \in {\Omega _t},n = 0,1, \cdots ,}
\end{array}} \right.
\label{a04}
\end{eqnarray}}
subject to the homogeneous Dirichlet boundary condition
\begin{eqnarray}
{I_H}(x(t),t) = {I_V}(x(t),t) = 0,\;\; x({\rm{t}}) \in \partial {\Omega _t},t > 0,
\label{a05}
\end{eqnarray}
and the initial condition
\begin{eqnarray}
{I_H}(x,0) = {I_{H,0}}(x) \le \frac{{{b_H}{N_H}}}{{{m_H}}},\;\;{I_V}(x,0) = {I_{V,0}}(x) \le \frac{\Lambda }{{{m_V}}},\;\; x \in {{\overline \Omega   }_0},
\label{a06}
\end{eqnarray}
where $\mathbf{a}$ represents the flow, the advection terms $\mathbf{a} \cdot \nabla {I_H}$ and $\mathbf{a} \cdot \nabla {I_V}$ indicate that it matches to elemental volumes transferring with the flow because of local growth, and the dilution terms ${I_H}(\nabla  \cdot \mathbf{a})$ and ${I_V}(\nabla  \cdot \mathbf{a})$ arise from the local volume expansion\cite{Baker2007}. ${\Omega _t} \subset {\mathbb{R}^N}(N \ge 1)$ is an evolving domain and $\Omega_0$ refers to the initial domain. The parameter $\tau$ denotes the period of pulsed intervention, ${(n\tau )}^ +$ represents the right-hand limit at time
$n\tau$, and $P({I_V}(x(t),n\tau ))$ stands for the impulsive function. ${I_{H,0}}(x)$ and ${I_{V,0}}(x)$ are positive continuous initial functions. Moreover, the last equation of model (\ref{a04}) indicates the spatial density of infected mosquitoes after pulsed intervention is applied at position $x(t)$ and time $n\tau$. From the third equation of the model (\ref{a04}), it follows that the infected human population is not subject to pulsed intervention.

To overcome the analytical difficulties caused by convection terms and dilution terms, we regard $\Gamma (y,t)$ as a transformation from ${{\Omega _0}}$ to ${{\Omega _t}}$. Let $t_0$ and $x_0\in\Omega_{t_0}$ be given, and define a curve $\mathbf{x}(t)=\Gamma(y_0,t)$. Then we have ${x_0} = \mathbf{x}({t_0}) = \Gamma ({y_0},{t_0})$ for ${y_0} \in {\Omega _0}$ and ${y_0} = \mathbf{X}({x_0},{t_0})$, where $\mathbf{X}$ is the inverse of $\Gamma$. Thus, it follows that $\mathbf{x}(0)=\Gamma(y_0,0)=y_0$. In addition, the flow $\mathbf{a}$ satisfies
\begin{eqnarray*}
\mathbf{a}({x_0},{t_0}) = \mathbf{x}'({t_0}) = \frac{{\partial \Gamma ({y_0},{t_0})}}{{\partial t}} = \frac{{\partial \Gamma (\mathbf{X}({x_0},{t_0}),{t_0})}}{{\partial t}}.
\end{eqnarray*}
Given the arbitrariness of $x_0$ and $t_0$, we have the general formula $
\mathbf{a}(x,t) = {\left. {\frac{{\partial \Gamma (y,t)}}{{\partial t}}} \right|_{y = \mathbf{X}(x,t)}}$ for $ x \in {\Omega _t}$ and $t \ge 0$.
For simplicity, we consider a specific type of evolving domain whose evolution follows linear isotropic deformation (see, e.g., Eq. (15) in \cite{cej1999}). Such a deformation can be expressed as
\begin{eqnarray}
x = \Gamma (y,t) = \rho (t)y,
\label{a07}
\end{eqnarray}
where $\rho (t)$ is referred to as the scaling factor and satisfies. Therefore, the domain at time $t$ is given by ${\Omega _t}: = \rho (t){\Omega _0} = \{ \rho (t)y:y \in {\Omega _0}\}$. Equation (\ref{a07}) directly yields $y = \frac{x}{{\rho (t)}}: = \mathbf{X}(x,t)$, accordingly $
\mathbf{a}(x,t) = {\left. {\dot \rho (t)y} \right|_{y = \mathbf{X}(x,t)}} = \frac{{\dot \rho (t)}}{{\rho (t)}}x$. Now, we assume that
\begin{eqnarray*}
{I_H}(x,t) = {I_H}(\rho (t)y,t): = u(y,t),\;\; {I_V}(x,t) = {I_V}(\rho (t)y,t): = v(y,t),\;\; y \in {\Omega _0},t > 0,
\end{eqnarray*}
and further obtain
$$\frac{{\partial u}}{{\partial t}} = \frac{{\partial {I_H}}}{{\partial t}} + \mathbf{a} \cdot \nabla {I_H},\;\; \frac{{\partial v}}{{\partial t}} = \frac{{\partial {I_V}}}{{\partial t}} + \mathbf{a} \cdot \nabla {I_V},$$
$$\Delta {I_H} = \frac{1}{{{\rho ^2}(t)}}\Delta u,\;\; \Delta {I_V} = \frac{1}{{{\rho ^2}(t)}}\Delta v,\;\; \nabla  \cdot \mathbf{a} = \frac{{N\dot \rho (t)}}{{\rho (t)}}.$$
We therefore transform problem (\ref{a04})-(\ref{a06}) into the following model:
{\small \begin{eqnarray}
\left\{ {\begin{array}{*{20}{l}}
{\frac{{\partial u}}{{\partial t}} = \frac{{{d_H}}}{{{\rho ^2}(t)}}\Delta u + \frac{{{\beta _H}b}}{{{N_H} + L}}(\frac{{{b_H}{N_H}}}{{{m_H}}} - u)v - (\frac{{N\dot \rho (t)}}{{\rho (t)}} + {m_H} + {r_H})u,}&{y \in {\Omega _0},t \in ({{(n\tau )}^ + },(n + 1)\tau ],}\\
{\frac{{\partial v}}{{\partial t}} = \frac{{{d_V}}}{{{\rho ^2}(t)}}\Delta v + \frac{{{\beta _V}b}}{{{N_H} + L}}(\frac{\Lambda }{{{m_V}}} - v)u - (\frac{{N\dot \rho (t)}}{{\rho (t)}} + {m_V})v,}&{y \in {\Omega _0},t \in ({{(n\tau )}^ + },(n + 1)\tau ],}\\
{u(y,{{(n\tau )}^ + }) = u(y,n\tau ),\;v(y,{{(n\tau )}^ + }) = P(v(y,n\tau )),}&{y \in {\Omega _0},}\\
{u(y,t) = v(y,t) = 0,}&{y \in \partial {\Omega _0},t > 0,}\\
{u(y,0) = {I_{H,0}}(x(0)) \le \frac{{{b_H}{N_H}}}{{{m_H}}},\;v(y,0) = {I_{V,0}}(x(0)) \le \frac{\Lambda }{{{m_V}}},}&{y \in {{\overline \Omega }_0},n = 0,1, \cdots .}
\end{array}} \right.
\label{a08}
\end{eqnarray}}

The primary objective of this paper is to explore the long-term dynamics of the solution to problem (\ref{a08}) under the effects of impulsive intervention and domain evolution. To ensure the existence and uniqueness of classical solution, we assume that the initial functions, defined by ${I_{H,0}}(x(0)):= {u_0}(y)$ and ${I_{V,0}}(x(0)) := {v_0}(y)$, satisfy ${u_0}(y), {v_0}(y) \in {C^1}(\overline{\Omega}_0)$ with ${u_0}(y), {v_0}(y) \ge 0$ (and not identically zero). We successively propose the following assumptions regarding the impulsive function $P(v)$ and the scaling factor $\rho (t)$:
\begin{enumerate}
		\item[{$\mathbf{(A1)}$:}]  $P(v)\in C^1[0,+\infty)$, $P(0)=0$, $P'(0)>0$, and $P(v)$ is nondecreasing for $v\ge0$.
		\item[{$\mathbf{(A2)}$:}]  The function $\frac{{P(v)}}{v}$ is strictly  decreasing and satisfies $0 < \frac{{P(v)}}{v} \le 1$ for $v>0$.
		\item[{$\mathbf{(H)}$:}] $\rho (t) \in {C}^1[0,\infty )$, $\rho (0) = 1$ and $\rho (t)$ is $\tau$-periodic (i.e. $\rho (t+\tau)=\rho (t))$.
\end{enumerate}

Following \cite{my2021}, several functions satisfy Assumptions $\mathbf{(A1)}$ and $\mathbf{(A2)}$. Examples include the linear function $P(v)=cv$ with $c\in (0,1]$ and the Beverton-Holt function $P(v) = \frac{{mv}}{a+v}$ with $a\in (0,\infty)$ and $m\in (0,a]$. Additionally,  in the absence of impulsive interventions (i.e., when $P(v)=v$), problem (\ref{a08}) reduces to a system involving only domain evolution, a case already discussed in \cite{zm2019}.

With the initial conditions in problem (\ref{a08}) replaced by periodic conditions, it becomes the following periodic problem:
\begin{eqnarray}
\left\{ {\begin{array}{*{20}{l}}
{\frac{{\partial U}}{{\partial t}} = \frac{{{d_H}}}{{{\rho ^2}(t)}}\Delta U + \frac{{{\beta _H}b}}{{{N_H} + L}}(\frac{{{b_H}{N_H}}}{{{m_H}}} - U)V - (\frac{{N\dot \rho (t)}}{{\rho (t)}} + {m_H} + {r_H})U,}&{y \in {\Omega _0},t \in ({0^ + },\tau ],}\\
{\frac{{\partial V}}{{\partial t}} = \frac{{{d_V}}}{{{\rho ^2}(t)}}\Delta V + \frac{{{\beta _V}b}}{{{N_H} + L}}(\frac{\Lambda }{{{m_V}}} - V)U - (\frac{{N\dot \rho (t)}}{{\rho (t)}} + {m_V})V,}&{y \in {\Omega _0},t \in ({0^ + },\tau ],}\\
{U(y,{0^ + }) = U(y,0),\;V(y,{0^ + }) = P(V(y,0)),}&{y \in {\Omega _0},}\\
{U(y,t) = V(y,t) = 0,}&{y \in \partial {\Omega _0},t \in (0,\tau ],}\\
{U(y,0) = U(y,\tau ),\;V(y,0) = V(y,\tau ),}&{y \in {{\overline \Omega }_0}.}
\end{array}} \right.
\label{a09}
\end{eqnarray}

As a crucial concept in infectious disease dynamics, the basic reproduction number $R_0$ quantifies the spread or extinction of an infectious disease. Simultaneously, the principal eigenvalue $\lambda_1$ can similarly be used as a threshold to assess the transmission of infectious diseases.

In Section 3, the existence and properties of $\lambda_1$ are discussed. The main results concerning $\lambda_1$ are presented below, with their proofs detailed in Section 4.

\begin{thm}\label{thm1.2}
If $\lambda_1 < 0$, then auxiliary problem (\ref{a09}) admits a unique positive $\tau$-periodic solution $(U(y,t), V(y,t))$.
\end{thm}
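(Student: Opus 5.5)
The plan is to use the method of upper and lower solutions together with monotone iteration for the cooperative system (\ref{a09}). Positive periodic solutions are then exactly the fixed points of the Poincar\'e map $Q:(U_0,V_0)\mapsto (U(\cdot,\tau),V(\cdot,\tau))$. Here $Q$ is obtained by first applying the impulse $V_0\mapsto P(V_0)$ and then solving the parabolic system on $(0,\tau]$.

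\textbf{Monotonicity and invariant box.} For $0\le U\le \frac{b_HN_H}{m_H}$ and $0\le V\le \frac{\Lambda}{m_V}$ the reaction terms are quasi-monotone nondecreasing: $\partial_V f_1\ge 0$ and $\partial_U f_2\ge0$. By (A1), $P$ is nondecreasing, so $Q$ is order preserving on this box. It is also strongly positive and compact, by the strong maximum principle and parabolic regularity.

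\textbf{Upper solution.} The candidate is the constant pair $(\overline U,\overline V)=(M_1,M_2)$ with $M_1=\frac{b_HN_H}{m_H}$ and $M_2=\frac{\Lambda}{m_V}$. The difficulty is that the term $-\frac{N\dot\rho}{\rho}u$ changes sign, so a constant is not directly a supersolution. I would therefore use $\overline U=M_1\rho^{-N}(t)e^{\kappa t}$-type corrections. More simply, I would take the solution of (\ref{a08}) starting from $(M_1,M_2)$ and show that $Q(M_1,M_2)\le (M_1,M_2)$. This holds because at $U=M_1$ the infection term vanishes, and $P(v)\le v$ by (A2). Periodicity of $\rho$ makes the factor $\rho^{-N}$ integrate to $1$ over one period, so $Q(M_1,M_2)\le(M_1,M_2)$ follows from comparison with the decoupled linear equations.

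\textbf{Lower solution.} Let $(\phi,\psi)>0$ be the principal eigenfunction of the linearized impulsive periodic problem associated with $\lambda_1<0$, as established in Section 3. Set $(\underline U,\underline V)=\varepsilon(\phi,\psi)$. Since $\lambda_1<0$ and the nonlinear terms are $O(\varepsilon^2)$, this pair is a strict subsolution on $(0,\tau]$ for small $\varepsilon$. At the impulse, (A2) gives $P(\varepsilon\psi)/(\varepsilon\psi)\to P'(0)$ as $\varepsilon\to0$; since the eigenproblem's jump condition involves $P'(0)$ and $P\in C^1$, the error is $o(\varepsilon)$, which is absorbed by the strict margin $-\lambda_1$. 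We also need $\varepsilon(\phi,\psi)\le (M_1,M_2)$, which holds for small $\varepsilon$. Monotone iteration of $Q$ from above and below then yields a maximal and a minimal positive fixed point $(U^*,V^*)\ge(U_*,V_*)\ge\varepsilon(\phi,\psi)$.

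\textbf{Uniqueness.} I expect this step to be the main obstacle. Given two positive periodic solutions with $(U_1,V_1)\le(U_2,V_2)$, let $s^*=\sup\{s\in(0,1]: s(U_2,V_2)\le(U_1,V_1)\}$, and suppose $s^*<1$. The reaction is sublinear: $f_1(sU,sV)-sf_1(U,V)=\frac{\beta_Hb}{N_H+L}s(1-s)UV>0$, and similarly for $f_2$. For the impulse, (A2) gives $P(sV)> sP(V)$ for $s<1$. Hence $s^*(U_2,V_2)$ is a strict subsolution of the periodic problem. The strong maximum principle, together with the Hopf lemma at the Dirichlet boundary so that the comparison of normal derivatives gives an interior ordering, then shows $s^*(U_2,V_2)\ll(U_1,V_1)$ after one period. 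This contradicts the maximality of $s^*$. So $s^*=1$, and the maximal and minimal solutions coincide.

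The delicate points are handling the boundary, where both solutions vanish, via the Hopf lemma, and checking that the strict inequality survives the impulse at $t=0$.
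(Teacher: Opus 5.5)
Your architecture is the paper's: upper and lower solutions, monotone iteration of the Poincar\'e map, and sublinearity for uniqueness. However, two building blocks fail as stated.

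\textbf{Upper solution.} With $M_1=\frac{b_HN_H}{m_H}$ and $M_2=\frac{\Lambda}{m_V}$, the claim $Q(M_1,M_2)\le(M_1,M_2)$ is false in general. So is the invariance of the box on which you assert that $Q$ is order preserving.

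At $U=M_1$ the $U$-reaction equals $-(\frac{N\dot\rho}{\rho}+m_H+r_H)M_1$. This is positive whenever the domain contracts fast enough, so trajectories leave the box during the period.

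Periodicity of $\rho$ does not repair this at $t=\tau$. The identity $\int_0^\tau\frac{N\dot\rho}{\rho}\,dt=0$ neutralizes only the linear dilution. The infection term $\frac{\beta_Hb}{N_H+L}(M_1-U)V$ is positive precisely when expansion has pushed $U$ below $M_1$, so there is no decoupled linear majorant. For instance, let $\rho$ rise quickly to $R>1$, stay there, and fall quickly back to $1$. After the rise $u\approx R^{-N}M_1$. Infection then drives $u$ towards the endemic level $u^*$ of the frozen problem. The final contraction multiplies it by about $R^N$, giving $u(\cdot,\tau)\approx R^Nu^*>M_1$ in the interior (for small diffusion) once $u^*>R^{-N}M_1$.

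The paper instead takes the constant $(\overline C,\overline C)$ with $\overline C$ so large that $\frac{\beta_Hb}{N_H+L}(\frac{b_HN_H}{m_H}-\overline C)\le\min_t\frac{N\dot\rho}{\rho}$ and $\frac{\beta_Vb}{N_H+L}(\frac{\Lambda}{m_V}-\overline C)\le\min_t\frac{N\dot\rho}{\rho}$. The self-limiting part of the mass-action terms then beats the concentration at every $t$, and $\overline C\ge P(\overline C)$ by (A2).

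Be aware that any such upper solution lies above $M_1$, where the coupling coefficient $\frac{\beta_Hb}{N_H+L}(M_1-U)$ is negative. So the order preservation of $Q$, which both your iteration and your uniqueness step use, needs a separate justification on that part of the sector; the paper's Step 1 does not address this either. Alternatively, under the extra hypothesis $\frac{N\dot\rho}{\rho}+m_H+r_H\ge0$ and $\frac{N\dot\rho}{\rho}+m_V\ge0$ on $[0,\tau]$, your constant $(M_1,M_2)$ is a genuine upper solution and your box argument goes through.

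\textbf{Lower solution.} The pair $\varepsilon(\phi,\psi)$ violates the jump inequality, and not by an error that the margin $-\lambda_1$ can absorb as you have set things up.

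Since $\psi(\cdot,0^+)=P'(0)\psi(\cdot,0)$, you need $\varepsilon P'(0)\psi(y,0)\le P(\varepsilon\psi(y,0))$. But (A2) says $P(v)/v$ is strictly decreasing with limit $P'(0)$ at $0^+$. Hence $P(s)<P'(0)s$ for every $s>0$, and the inequality fails for every $\varepsilon>0$ wherever $\psi(y,0)>0$.

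The margin $-\lambda_1$ sits in the differential inequality. It helps at the jump only after you convert it into growth across the period, which is exactly the paper's construction: $\varepsilon e^{-(\lambda_1+\mu)(t-\tau)}(\phi_1,\phi_2)$ on $(0^+,\tau]$ with $\mu>0$ and $\lambda_1+\mu<0$. This function has three properties:
\begin{itemize}
\item it keeps slack $-\mu\phi_i+O(\varepsilon)$ in the equations;
\item it returns to $\varepsilon(\phi_1,\phi_2)$ at $t=\tau$, so the periodicity inequality holds;
\item just after the jump it starts from $\varepsilon e^{(\lambda_1+\mu)\tau}P'(0)\phi_2(y,0)$. This is $\le P(\varepsilon\phi_2(y,0))$ for small $\varepsilon$, because $e^{(\lambda_1+\mu)\tau}<1$, $\phi_2$ is bounded, and $P(s)/s\to P'(0)>0$ as $s\to0^+$.
\end{itemize}

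\textbf{Uniqueness.} Your $s^*$ argument uses $f_1(sU,sV)-sf_1(U,V)=\frac{\beta_Hb}{N_H+L}s(1-s)UV>0$, its analogue for $f_2$, $P(sV)>sP(V)$, and Hopf at the boundary. This is essentially the paper's Step 2 and is sound, given cooperativity along the two solutions.
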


\begin{thm}\label{thm1.3}
If $\lambda_1 < 0$, then for any initial value $({u_0}(y),{v_0}(y))$ the solution $(u(y,t),v(y,t))$ to problem (\ref{a08}) fulfills
\begin{eqnarray}
\mathop {\lim }\limits_{n \to \infty } (u(y,n\tau  + t),v(y,n\tau  + t)) = ({U }(y,t),{V }(y,t)),\;\; y\in {{\overline \Omega }_0},t \in [0,\infty ).
\label{a10}
\end{eqnarray}
\end{thm}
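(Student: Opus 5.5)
We prove Theorem \ref{thm1.3} by squeezing the solution of (\ref{a08}) between two monotone sequences of period maps, both converging to the unique positive periodic solution $(U,V)$ from Theorem \ref{thm1.2}. The system is cooperative on the order interval $[0,\frac{b_HN_H}{m_H}]\times[0,\frac{\Lambda}{m_V}]$: the $u$-equation increases in $v$ while $u<\frac{b_HN_H}{m_H}$, and similarly for $v$. The pulse $P$ is nondecreasing by (A1) and satisfies $P(v)\le v$ by (A2). Hence this interval is invariant and the comparison principle holds across each impulse.

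Let $\mathcal{Q}$ denote the Poincaré map $(u_0,v_0)\mapsto(u(\cdot,\tau),v(\cdot,\tau))$. Because $\rho$ is $\tau$-periodic, $(u(\cdot,n\tau+t),v(\cdot,n\tau+t))$ is the solution started from $\mathcal{Q}^n(u_0,v_0)$ and evaluated at time $t$. It therefore suffices to show $\mathcal{Q}^n(u_0,v_0)\to(U(\cdot,0),V(\cdot,0))$ in $C(\overline\Omega_0)$. Convergence for $t\in[0,\infty)$, uniformly on compact $t$-sets, then follows from continuous dependence on initial data over a single period and periodicity.

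\textbf{Upper bound.} Take $(\overline u_0,\overline v_0)=(\frac{b_HN_H}{m_H},\frac{\Lambda}{m_V})$. Its solution over one period lies below this constant pair. The reason is that the constant pair is a supersolution, with $P(\frac{\Lambda}{m_V})\le \frac{\Lambda}{m_V}$ at the pulse, so the reaction terms are nonpositive after the pulse; the $N\dot\rho/\rho$ term needs care, as discussed below. Monotonicity then makes $\mathcal{Q}^n(\overline u_0,\overline v_0)$ nonincreasing in $n$. Its limit, by parabolic estimates and compactness of $\mathcal{Q}$, is a fixed point of $\mathcal{Q}$ and hence a nonnegative periodic solution.

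\textbf{Lower bound.} Let $(\phi,\psi)>0$ be the principal eigenfunction associated with $\lambda_1<0$, taken from Section 3. For $\varepsilon$ small, $\varepsilon(\phi,\psi)e^{\lambda_1 t}$-type functions, adjusted so they are $\tau$-periodic, form a subsolution of the impulsive problem. At the pulse this uses $P(\varepsilon\psi)\ge (P'(0)-o(1))\varepsilon\psi$, where the strict monotonicity of $P(v)/v$ makes the linearization dominate. On the reaction side, the quadratic terms are $O(\varepsilon^2)$ and are absorbed by $-\lambda_1>0$. After one period $u,v>0$ in $\Omega_0$ with negative normal derivative on $\partial\Omega_0$ by the strong maximum principle and the Hopf lemma, so $\varepsilon(\phi,\psi)\le \mathcal{Q}(u_0,v_0)$ for small $\varepsilon$. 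The iterates $\mathcal{Q}^n(\varepsilon\phi,\varepsilon\psi)$ then increase to a positive fixed point. Both limits are positive periodic solutions, so by the uniqueness in Theorem \ref{thm1.2} they coincide with $(U,V)$. Squeezing $\mathcal{Q}^{n+1}(u_0,v_0)$ between the two sequences yields (\ref{a10}).

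\textbf{Main obstacle.} The hardest steps are (i) the careful construction of the periodic subsolution across the impulse using the eigenpair from Section 3, and (ii) the upper bound. In (ii) the constant pair may fail to be a supersolution when $\dot\rho<0$, since then $-N\dot\rho/\rho$ is positive. The natural fix is to replace the constants by $M(\rho(t))^{-N}$-weighted constants, or simply by a large multiple $K(\phi,\psi)$ or the solution of the problem from a large constant. Invariance of the order interval in the original $I_H,I_V$ variables, where $0\le I_H\le b_HN_H/m_H$, should give the needed a priori bound.
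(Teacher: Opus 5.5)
Your architecture matches the paper's. You squeeze $(u,v)(\cdot,t+n\tau)$ between iterates issued from an ordered periodic upper/lower pair, pass to the monotone limits, and identify both with $(U,V)$ through the uniqueness in Theorem~\ref{thm1.2}. Using the period map $\mathcal{Q}$ instead of the iteration (\ref{d06})--(\ref{d07}) is a cosmetic, and cleaner, difference.

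The step that fails as written is the subsolution, your step (i). Since $\lambda_1<0$, the profile $\varepsilon e^{\lambda_1 t}(\phi_1,\phi_2)$ decreases over each period. Making it $\tau$-periodic therefore forces an upward jump at $t=n\tau$. But a lower solution must satisfy $\hat u((n\tau)^+)\le \hat u(n\tau)$ (there is no pulse on $u$) and $\hat v((n\tau)^+)\le P(\hat v(n\tau))$.

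Using the exact linear growth $\varepsilon e^{-\lambda_1(t-\tau)}(\phi_1,\phi_2)$ does not help either. It solves the linearized equations exactly, so there is no room for the quadratic terms $-\frac{\beta_Hb}{N_H+L}\underline U\,\underline V$ and $-\frac{\beta_Vb}{N_H+L}\underline U\,\underline V$, which have the wrong sign.

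The missing idea is to split the margin $-\lambda_1$. Fix $\mu\in(0,-\lambda_1)$ and take $\varepsilon e^{-(\lambda_1+\mu)(t-\tau)}(\phi_1,\phi_2)$ on $(0,\tau]$, equal to $\varepsilon(\phi_1,\phi_2)(\cdot,0)$ at $t=0$, as in Step~1 of the proof of Theorem~\ref{thm1.2}.
\begin{itemize}
\item In the interior the residuals are $\varepsilon e^{-(\lambda_1+\mu)(t-\tau)}\phi_i\,(-\mu+O(\varepsilon))\le 0$.
\item At the pulse you need $\varepsilon e^{(\lambda_1+\mu)\tau}P'(0)\phi_2(\cdot,0)\le P(\varepsilon\phi_2(\cdot,0))$. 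This holds for small $\varepsilon$ because $e^{(\lambda_1+\mu)\tau}<1$ and $P(s)/s\to P'(0)$.
\end{itemize}
The strict decrease of $P(v)/v$ does not make the linearization dominate. It gives $P(v)<P'(0)v$, which is the unfavourable direction for a subsolution. Only differentiability of $P$ at $0$ and the slack factor $e^{(\lambda_1+\mu)\tau}$ are used.

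On the upper bound your suspicion is justified. The pair $(\frac{b_HN_H}{m_H},\frac{\Lambda}{m_V})$ is not a supersolution once $\frac{N\dot\rho}{\rho}<-\min\{m_H+r_H,m_V\}$ somewhere. Returning to $I_H,I_V$ does not help, since the dilution term $I_H(\nabla\cdot\mathbf{a})$ is the same term.

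Of your repairs, $K(\phi_1,\phi_2)$ fails for $\lambda_1<0$. Its residual in the first equation is $K\phi_1\bigl(\lambda_1+K\frac{\beta_Hb}{N_H+L}\phi_2\bigr)$, which is negative near $\partial\Omega_0$.

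The weighted constants $(A\rho(t)^{-N},B\rho(t)^{-N})$ do work, with $A\min_t\rho^{-N}\ge \frac{b_HN_H}{m_H}$ and $B\min_t\rho^{-N}\ge\frac{\Lambda}{m_V}$. They cancel the dilution term exactly, and $P(s)\le s$ handles the pulse. The paper instead uses the large constant $(\overline C,\overline C)$ of (\ref{d05}). There the saturation term $-\frac{\beta_Hb}{N_H+L}\overline C^{2}$ dominates the possibly positive contribution $-\frac{N\dot\rho}{\rho}\overline C$.

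With either upper solution and the rate-split subsolution, the rest of your argument goes through: Hopf gives $\varepsilon(\phi_1,\phi_2)(\cdot,0)\le\mathcal{Q}(u_0,v_0)$, the $\mathcal{Q}$-iterates converge monotonically to fixed points, and uniqueness finishes it.
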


\begin{thm}\label{thm1.1}
If ${\lambda _1} \ge 0$, then the solution $(u(y,t),v(y,t))$ to problem (\ref{a08}) satisfies $$\mathop {\lim }\limits_{t \to \infty } (u(y,t),v(y,t)) = (0,0)\;\;uniformly\;\;for\;\;y\in {{\overline \Omega }_0}.$$
\end{thm}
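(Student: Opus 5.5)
The plan is to compare the nonlinear impulsive system (\ref{a08}) with its linearization at $(0,0)$, which is cooperative and periodic. Then we use the principal eigenpair of that linear problem to build an upper solution that decays.

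\textbf{Linear problem and upper solution.} Write $k_H=\frac{\beta_H b}{N_H+L}\frac{b_HN_H}{m_H}$ and $k_V=\frac{\beta_V b}{N_H+L}\frac{\Lambda}{m_V}$. The periodic eigenvalue problem from Section 3 is
\begin{eqnarray*}
\left\{ \begin{array}{l}
\phi_t=\frac{d_H}{\rho^2(t)}\Delta\phi+k_H\psi-(\frac{N\dot\rho}{\rho}+m_H+r_H)\phi+\lambda\phi,\\
\psi_t=\frac{d_V}{\rho^2(t)}\Delta\psi+k_V\phi-(\frac{N\dot\rho}{\rho}+m_V)\psi+\lambda\psi,
\end{array}\right.
\end{eqnarray*}
with Dirichlet boundary conditions, impulse $\phi(y,0^+)=\phi(y,0)$, $\psi(y,0^+)=P'(0)\psi(y,0)$, and $\tau$-periodicity. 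It has a principal eigenvalue $\lambda_1$ with a positive eigenfunction pair $(\phi,\psi)$. Three facts make the nonlinear problem sit below the linear one. First, the comparison principle (with the initial bounds in (\ref{a06}), which are preserved) gives $0\le u\le \frac{b_HN_H}{m_H}$ and $0\le v\le\frac{\Lambda}{m_V}$. Second, the reaction terms satisfy $(\frac{b_HN_H}{m_H}-u)v\le \frac{b_HN_H}{m_H}v$ and similarly for $v$. Third, by (A2), $P(v)\le P'(0)v$, since $P(v)/v$ is decreasing with limit $P'(0)$ as $v\to0^+$.

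Now set $(\bar u,\bar v)=Me^{-\lambda_1 t}(\phi,\psi)$. With $M$ large this pair is an upper solution of (\ref{a08}): the differential inequalities hold with equality for the linear part and with $\ge$ for the nonlinear part. At the impulse times, $\bar v(n\tau^+)=P'(0)\bar v(n\tau)\ge P(\bar v(n\tau))$, and $P$ is monotone by (A1). The comparison principle for impulsive cooperative systems then gives $(u,v)\le Me^{-\lambda_1 t}(\phi,\psi)$. If $\lambda_1>0$ we get uniform exponential decay, because $\phi,\psi$ are bounded and periodic and $\rho$ is bounded below.

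\textbf{The critical case $\lambda_1=0$.} This step is the main obstacle, since the upper solution is then only bounded. I would use a monotone iteration argument. Take the solution $(\hat u,\hat v)$ of (\ref{a08}) started from a large constant, or from $M(\phi,\psi)(0)$, which is a periodic upper solution. The sequence $(\hat u,\hat v)(y,n\tau+t)$ is nonincreasing in $n$. By parabolic estimates on each interval $((n\tau)^+,(n+1)\tau]$ it converges to a nonnegative $\tau$-periodic solution $(U^*,V^*)$ of (\ref{a09}), and by comparison $(u,v)\le(\hat u,\hat v)$.

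It remains to show $(U^*,V^*)\equiv0$. Suppose not. The strong maximum principle makes $U^*,V^*>0$ in $\Omega_0$. Then $(U^*,V^*)$ satisfies a linear periodic impulsive problem with coefficients $k_H(1-U^*m_H/(b_HN_H))<k_H$ and an impulse factor $P(V^*)/V^*<P'(0)$, where both inequalities are strict somewhere. Its principal eigenvalue is therefore $0$, since $(U^*,V^*)$ is a positive periodic solution of it. Strict monotonicity of the principal eigenvalue with respect to the coefficients, which I expect from the Krein--Rutman framework via the Poincar\'e map in Section 3, gives $0>\lambda_1=0$, a contradiction.

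Alternatively, to avoid the monotonicity lemma, one can pair with the adjoint eigenfunction of the Poincar\'e map: the strictly positive defect integrated against the adjoint positive eigenfunction yields the contradiction. Either way we obtain $(U^*,V^*)=0$, so $(u,v)\to(0,0)$. The convergence is uniform via Dini's theorem, or via the $C^{1}$ parabolic estimates over one period.
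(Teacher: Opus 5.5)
Your proposal is correct. For $\lambda_1>0$ it is the paper's argument: the upper solution $Me^{-\lambda_1 t}(\phi_1,\phi_2)$, with the impulse handled by $P(v)\le P'(0)v$. For the critical case, however, you take a genuinely different route.

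\textbf{The paper's route.} The paper adds $+\lambda U$, $+\lambda V$ to the periodic problem (problem (\ref{d09})). For $\lambda>\lambda_1$ it applies Theorems \ref{thm1.2}--\ref{thm1.3} to get $\limsup_n(u,v)(\cdot,n\tau+t)\le(U_\lambda,V_\lambda)$, and then lets $\lambda\downarrow\lambda_1$. It simply asserts that $(U_\lambda,V_\lambda)$ is monotone in $\lambda$ and tends to $(0,0)$, because (\ref{d09}) has only the zero periodic solution for $\lambda\le\lambda_1$.

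\textbf{Your route.} You let the solution issuing from a periodic upper solution decrease to the maximal nonnegative periodic solution $(U^*,V^*)$ of (\ref{a09}), and then rule out $(U^*,V^*)\neq0$.

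\textbf{Comparison.} The paper's route recycles its convergence theorem but leaves the decisive step unproved: the limit of $(U_\lambda,V_\lambda)$ is precisely a nonnegative periodic solution of (\ref{a09}), and nothing there shows it vanishes. Your route needs neither the perturbed family nor Theorem \ref{thm1.3}, and it isolates that nonexistence step explicitly, which is the real content.

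\textbf{The one unsupported step.} You rely on strict monotonicity of the principal eigenvalue with respect to $(y,t)$-dependent coefficients and a $y$-dependent impulse factor $P(V^*(y,0))/V^*(y,0)$. The paper does not supply this: Section 3 treats only the constant factor $P'(0)$, and Lemma \ref{thm3.4} varies only $P'(0)$ and $\Omega_0$.

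\textbf{A cheaper way to close it.} Use the sliding argument of Step 2 in the proof of Theorem \ref{thm1.2}.
Set $\gamma^*=\inf\{\gamma>0:(U^*,V^*)\le\gamma(\phi_1,\phi_2)\}$ and $(w,z)=\gamma^*(\phi_1,\phi_2)-(U^*,V^*)\ge0$.
Then $(w,z)$ satisfies the cooperative linearized system with sources $\lambda_1\gamma^*\phi_1+\frac{\beta_Hb}{N_H+L}U^*V^*$ and $\lambda_1\gamma^*\phi_2+\frac{\beta_Vb}{N_H+L}U^*V^*$. These are positive in $\Omega_0$ when $\lambda_1\ge0$ and $(U^*,V^*)\not\equiv0$.
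At the impulse, $z(\cdot,0^+)\ge P'(0)z(\cdot,0)$.
The strong maximum principle and the Hopf lemma then put $(w,z)$ in the interior of the cone, so $\gamma^*$ could be lowered, a contradiction.

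This uses only the eigenpair from Theorem \ref{thm3.1}. Unlike your reduced-coefficient system, it also does not need $U^*\le b_HN_H/m_H$ and $V^*\le\Lambda/m_V$ to keep the off-diagonal coefficients nonnegative.
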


The remaining content is organized as follows: Section 2 establishes the well-posedness of solutions to the problem (\ref{a08}). Furthermore, to provide a more intuitive understanding of the theoretical results, several numerical simulations are presented in Section 5 to illustrate the effects of impulsive interventions and regional variations. Finally, the paper concludes with a discussion.

\section{Well-posedness}

The existence, uniqueness, and nonnegativity of the solution to problem (\ref{a08}) are demonstrated in this section.

\begin{thm}
Problem (\ref{a08}) admits a unique nonnegative solution $(u(y,t),v(y,t))$ on $\overline{\Omega}_0 \times [0, \infty)$ for any initial value $({u_0}(y),{v_0}(y))$.
\end{thm}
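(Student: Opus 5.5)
The proof goes interval by interval: solve a parabolic problem on each impulse interval, apply the impulse at the jump, and restart. Write $\beta_1=\frac{\beta_H b}{N_H+L}$, $\beta_2=\frac{\beta_V b}{N_H+L}$, $M_1=\frac{b_HN_H}{m_H}$, $M_2=\frac{\Lambda}{m_V}$ for brevity. On $(0,\tau]$ we consider the non-impulsive system with coefficients $\frac{d_H}{\rho^2(t)},\frac{d_V}{\rho^2(t)}$ and $\frac{N\dot\rho}{\rho}$. By $\mathbf{(H)}$ these are continuous and bounded, and $\rho$ is bounded below by a positive constant on $[0,\tau]$. The initial data are $(u_0,P(v_0))$, which lie in $C^1(\overline\Omega_0)$ because $P\in C^1$ by $\mathbf{(A1)}$.

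The first step is to show that the rectangle $\Sigma=[0,M_1]\times[0,M_2]$ is invariant, using the method of upper and lower solutions (the Pao-type monotone iteration for quasimonotone systems). On $\Sigma$ the reaction terms
\[
f_1=\beta_1(M_1-u)v-(\tfrac{N\dot\rho}{\rho}+m_H+r_H)u,\qquad f_2=\beta_2(M_2-v)u-(\tfrac{N\dot\rho}{\rho}+m_V)v
\]
are quasimonotone nondecreasing, since $\partial_vf_1=\beta_1(M_1-u)\ge 0$ and $\partial_uf_2\ge 0$. They are also Lipschitz there.
\begin{itemize}
\item $(0,0)$ is a lower solution.
\item For an upper solution, the constant $(M_1,M_2)$ is not directly usable, because $-\frac{N\dot\rho}{\rho}M_i$ may be positive. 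Instead take $(\bar u,\bar v)=(M_1\rho(0)^N/\rho(t)^N,\;M_2\rho(0)^N/\rho(t)^N)$, or more simply $(K_1e^{\int_0^t \cdots},\dots)$. For example, $\bar u=M_1\rho^{-N}(t)$ satisfies $\bar u_t=-\frac{N\dot\rho}{\rho}\bar u$, so we need $0\ge f_1(\bar u,\bar v)+\frac{N\dot\rho}{\rho}\bar u$. It suffices to check this on the region where $\bar u\ge M_1$.
\end{itemize}
This bookkeeping is the main obstacle. The cleanest route is to work with $\tilde u=\rho^N u$, $\tilde v=\rho^N v$, which removes the dilution term, and to use the bound $\max\rho^N/\min\rho^N$ over a period. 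That yields a uniform-in-time bound $0\le u\le C_1$, $0\le v\le C_2$ with $C_i$ depending only on $M_i$ and $\rho$. This is all that is needed for global existence; the paper's sharper bounds $u\le M_1$, $v\le M_2$ for the biologically meaningful region can be kept when $\dot\rho\ge 0$ or handled in the same way.

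The second step gives existence and uniqueness on the first interval. Choose the ordered pair of upper and lower solutions $(0,0)\le(\bar u,\bar v)$. Monotone iteration then gives a classical solution on $\overline\Omega_0\times(0,\tau]$ lying between them, and uniqueness follows from the Lipschitz property in the sector via a Gronwall/maximum-principle argument. Nonnegativity follows from the lower solution $(0,0)$ and the strong maximum principle, which in fact gives positivity in the interior if the data are nontrivial.

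The third step is the impulse. At $t=\tau$ we set $u(\tau^+)=u(\tau)$ and $v(\tau^+)=P(v(\tau))$. By $\mathbf{(A1)}$–$\mathbf{(A2)}$ we have $0\le P(v)\le v$, so the new data remain nonnegative and stay within the same bounds. Periodicity of $\rho$ makes the problem on $((n\tau)^+,(n+1)\tau]$ identical in form to the one on $(0,\tau]$. Since the bounds are uniform in $n$, induction on $n$ yields a unique nonnegative solution on $\overline\Omega_0\times[0,\infty)$.

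One regularity subtlety needs attention: the restarted data $v(n\tau)$ are only as smooth as the parabolic solution. This is fine, since for $t>0$ that solution is $C^{2}$ in $y$. Compatibility at the boundary also holds, because $P(0)=0$ preserves the Dirichlet condition.
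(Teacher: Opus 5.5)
Your argument works in outline, but it takes a different route from the paper. The paper proceeds as follows:
\begin{itemize}
\item it takes a local solution from standard parabolic theory;
\item it proves nonnegativity through $w=e^{-kt}u$, $z=e^{-kt}v$ and a negative-minimum argument;
\item it asserts $(u,v)\le(\frac{b_HN_H}{m_H},\frac{\Lambda}{m_V})$;
\item it extends the solution by $L^p$/H\"older estimates and continuation, restarting at each $(n\tau)^+$.
\end{itemize}
You instead build ordered upper and lower solutions on each impulse interval, which gives existence, nonnegativity and a bound together. You then use $0\le P(v)\le v$ and the $\tau$-periodicity of $\rho$ to make the bound uniform in $n$.

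Your objection to the constant bound is correct. The maximum-principle argument behind it needs $\frac{N\dot\rho}{\rho}+m_H+r_H\ge0$ and $\frac{N\dot\rho}{\rho}+m_V\ge0$ on $[0,\tau]$, and $\mathbf{(H)}$ does not provide this. (Your condition $\dot\rho\ge0$ is vacuous for a nonconstant periodic $\rho$.) The rescaling by $\rho^N$ is the right repair. Since global existence only needs some bound on finite time intervals, the theorem itself is unaffected.

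Two details of your write-up need fixing.

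\textbf{The explicit candidate fails.} Your $\bar u=M_1\rho^{-N}(t)$ does not work at times where $\rho(t)>1$. There $\bar u<M_1$, and $\beta_1(M_1-\bar u)\bar v\le(m_H+r_H)\bar u$ can fail. Take instead $\bar u=K_1\rho^{-N}(t)$ and $\bar v=K_2\rho^{-N}(t)$, with $K_1\ge\max\{M_1\max_{[0,\tau]}\rho^N,\|u_0\|_\infty\}$ and $K_2\ge\max\{M_2\max_{[0,\tau]}\rho^N,\|v_0\|_\infty\}$. Then $\bar u\ge M_1$ and $\bar v\ge M_2$ for all $t$, and
\[
\bar u_t-\tfrac{d_H}{\rho^2}\Delta\bar u-f_1(\bar u,v)=\beta_1(\bar u-M_1)v+(m_H+r_H)\bar u\ge0\quad\text{for every } v\ge0 ,
\]
with the analogous inequality for $\bar v$ and every $u\ge0$. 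Since $\bar v$ is continuous and $\tau$-periodic with $\rho(n\tau)=1$, and $P(\bar v)\le\bar v$, this pair is an upper solution of the full impulsive problem.

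\textbf{Quasimonotonicity fails in this sector.} The sector $[0,\bar u]\times[0,\bar v]$ contains points with $u>M_1$ or $v>M_2$. There $\partial_vf_1=\beta_1(M_1-u)<0$ or $\partial_uf_2<0$, so the system is not quasimonotone nondecreasing. The Pao monotone iteration you cite, whose hypothesis you checked only on $\Sigma$, therefore does not apply as stated. You have two ways to fix this:
\begin{enumerate}
\item Invoke the upper--lower solution existence theorem for general Lipschitz systems. It requires the differential inequalities for all values of the other component in the sector. The display above, together with $f_1(0,v)=\beta_1M_1v\ge0$ and $f_2(u,0)=\beta_2M_2u\ge0$, supplies exactly that.
\item Take local existence from standard theory, prove nonnegativity, and then bound $\rho^Nu\le K_1$ and $\rho^Nv\le K_2$ by the scalar maximum principle. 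This is the paper's continuation scheme with a corrected a priori bound.
\end{enumerate}
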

\textbf{Proof:} Under the conditions that $u_0(y),v_0(y) \in {C}^1(\overline{\Omega}_0)$ and $P(v)\in {C}^1[0, +\infty)$, then the new initial values satisfy $u(y, 0^+), v(y, 0^+) \in C^1(\overline{\Omega}_0)$. According to the standard theory of parabolic equations\cite{wangmingxin2021}, there exists a $\tau^* > 0$ such that problem (\ref{a08}) admits a unique local nonnegative solution $(u(y,t),v(y,t))$ on the interval $[0, \tau^*]$.

We now suppose that $(u(y,t),v(y,t))$ is the solution to problem (\ref{a08}) on $\overline{\Omega}_0 \times [0, \tau_0]$ for any given $\tau_0$. Then there exists a positive constant $M(\tau_0) $ dependent on $\tau_0$ such that $|u|,|v| \leq M(\tau_0)$. The following result presents the nonnegativity and boundedness of $u(y,t)$ and $v(y,t)$ on $\overline{\Omega}_0 \times [0, \tau_0]$. Let
\begin{eqnarray}
w(y,t) = {e^{ - kt}}u(y,t),\;\;z(y,t) = {e^{ - kt}}v(y,t),\;\;{y \in {\overline\Omega _0},t \in [0,\tau_0],}
\label{b01}
\end{eqnarray}
where $k$ is a sufficiently large constant. Substituting (\ref{b01}) into the first two equations of problem (\ref{a08}) gives that
\begin{eqnarray}
\left\{ \begin{array}{l}
{w_t} = \frac{{{d_H}}}{{{\rho ^2}(t)}}\Delta w + \frac{{{\beta _H}b}}{{{N_H} + L}}\frac{{{b_H}{N_H}}}{{{m_H}}}z + \left( { - k - \frac{{{\beta _H}b}}{{{N_H} + L}}v - \frac{{N\dot \rho (t)}}{{\rho (t)}} - {m_H} - {r_H}} \right)w,\\
{z_t} = \frac{{{d_V}}}{{{\rho ^2}(t)}}\Delta z + \frac{{{\beta _V}b}}{{{N_H} + L}}\frac{\Lambda }{{{m_V}}}w + \left( { - k - \frac{{{\beta _V}b}}{{{N_H} + L}}u - \frac{{N\dot \rho (t)}}{{\rho (t)}} - {m_V}} \right)z.
\end{array} \right.
\label{b02}
\end{eqnarray}
We can thus prove that $ (w, z) \geq (0, 0)$ for $(y, t) \in \overline{\Omega}_0 \times [0, \tau_0]$. If not, then $\min (w, z) < 0$. Without loss of generality, assume that $\min(w, z) = w(y_0, t_0) < 0$ for $(y_0, t_0)\in \overline{\Omega}_0 \times [0, \tau_0]$. It follows that this is in contradiction with the first line of (\ref{b02}). In fact, the left-hand side of the first line satisfies $w_t(y_0, t_0) \le 0$, whereas the first term on the right-hand side satisfies $ \Delta w(y_0, t_0) \geq 0$. Besides, given that $w(y_0, t_0) \leq z(y_0, t_0)$ and the selection of $k$, one has
$$\frac{{{\beta _H}b}}{{{N_H} + L}}\frac{{{b_H}{N_H}}}{{{m_H}}}z({y_0},{t_0}) + \left( { - k - \frac{{{\beta _H}b}}{{{N_H} + L}}v - \frac{{N\dot \rho (t)}}{{\rho (t)}} - {m_H} - {r_H}} \right)w({y_0},{t_0}) > 0.$$
Therefore, $(w, z) \geq (0, 0)$ holds for $(y, t) \in \overline{\Omega}_0 \times [0, \tau_0]$. In view of (\ref{b01}), it follows that $(u(y,t),v(y,t)) \geq (0, 0)$ holds for $(y, t) \in \overline{\Omega}_0 \times [0, \tau_0]$. Furthermore, from problem (\ref{a08}) we obtain $(u(y,t), v(y,t)) \leq \left( \frac{b_H N_H}{m_H}, \frac{\Lambda}{m_V} \right) $ for $(y, t) \in \overline{\Omega}_0 \times [0, \tau_0]$.

Let $[0,\tau_{max})$ be the maximal interval of existence of the solution, with $\tau_{max}>0$. We prove that $\tau_{max}=\infty$. Otherwise, assume $\tau_{max}<\infty$. If there exists an integer $n_0$ such that $\tau_{max}\neq n_0\tau$, then the above result indicates that $u(y,t),v(y,t) \le \max \{ {\frac{{{b_H}{N_H}}}{{{m_H}}},\frac{\Lambda }{{{m_V}}}}\} := {C_0}$ for $(y, t) \in \overline{\Omega}_0 \times [0, \tau_{\text{max}})$. Now fix $\varepsilon_0 \in (0, \tau_{\text{max}})$ and $M_0 > \tau_{\text{max}}$. According to the $L^p$ theory, Sobolev embedding theorem and H$\ddot{o}$lder estimate\cite{Ladyzhenskaya1968}, there exists a $C_1$ depending only on $\varepsilon_0$, $M_0$ and $C_0$ such that ${\left\| {u( \cdot ,t)} \right\|_{{C^1}({{\overline \Omega }_0})}},{\left\| {v( \cdot ,t)} \right\|_{{C^1}({{\overline \Omega }_0})}} \le {C_1}$ for $t \in [{\varepsilon _0},{\tau _{\max }})$. Then there exists a $\bar \tau  > 0$ depending only on $C_0$ and $C_1$ such that the solution to problem (\ref{a08}) with ${\tau _{\max }} - \bar \tau /2$ as the initial time can be uniquely extended to ${\tau _{\max }} + \bar \tau /2$, which contradicts the assumption. If $\tau_{max}= n_0\tau$, we can replace the initial time with $(n_0\tau)^+$ and then apply the same method to obtain the above result.

Since the right-hand side of problem (1.6) satisfies the Lipschitz condition, problem (\ref{a08}) possesses a unique solution $(u(y,t),v(y,t))$ on $[0,\tau]$. Using $u(y, \tau^+)$ and $(v(y, \tau^+)$ as the new initial values of $u(y,\tau)$ and $v(y,\tau)$ in $[\tau,2\tau]$, we note that $u(y, \tau^+),v(y, \tau^+) \in C^1(\overline{\Omega}_0)$. Following the same process as before, the solution $(u(y,t),v(y,t))$ to problem (\ref{a08}) on $[\tau,2\tau]$ is also unique. By proceeding step by step, we conclude that problem (\ref{a08}) admits a globally unique nonnegative solution $(u(y,t),v(y,t))$ on $[0, \infty)$. The proof is hereby completed.\epf
\section{The existence and properties of the principal eigenvalue}
Upon linearizing the periodic problem (\ref{a09}) at the equilibrium $(0, 0)$, the corresponding periodic eigenvalue problem under consideration is given below:

\begin{eqnarray}
\left\{ {\begin{array}{*{20}{l}}
{\frac{{\partial {\phi _1}}}{{\partial t}} = \frac{{{d_H}}}{{{\rho ^2}(t)}}\Delta {\phi _1} + \frac{{{\beta _H}b}}{{{N_H} + L}}\frac{{{b_H}{N_H}}}{{{m_H}}}{\phi _2} - (\frac{{N\dot \rho (t)}}{{\rho (t)}} + {m_H} + {r_H}){\phi _1} + \lambda_1 {\phi _1},}&{y \in {\Omega _0},t \in ({0^ + },\tau ],}\\
{\frac{{\partial {\phi _2}}}{{\partial t}} = \frac{{{d_V}}}{{{\rho ^2}(t)}}\Delta {\phi _2} + \frac{{{\beta _V}b}}{{{N_H} + L}}\frac{\Lambda }{{{m_V}}}{\phi _1} - (\frac{{N\dot \rho (t)}}{{\rho (t)}} + {m_V}){\phi _2} + \lambda_1 {\phi _2},}&{y \in {\Omega _0},t \in ({0^ + },\tau ],}\\
{{\phi _1}(y,{0^ + }) = {\phi _1}(y,0),\;{\phi _2}(y,{0^ + }) = P'(0){\phi _2}(y,0),}&{y \in {\Omega _0},}\\
{{\phi _1}(y,t) = {\phi _2}(y,t) = 0,}&{y \in \partial {\Omega _0},t \in [0,\tau ],}\\
{{\phi _1}(y,0) = {\phi _1}(y,\tau ),\;{\phi _2}(y,0) = {\phi _2}(y,\tau ),}&{y \in {{\overline \Omega }_0}.}
\end{array}} \right.
\label{c01}
\end{eqnarray}

We only need to take $({\phi _1}(y,t),{\phi _2}(y,t)) = ({\psi _1}(y,t),{\psi _2}(y,t))$ in $\overline\Omega_0 \times [0, \tau]$ and ${\phi _2}(y,0) = {\phi _2}(y,\tau ) = {\psi _2}(y,\tau ) = 1/P'(0){\psi _2}(y,0)$ in $\overline\Omega_0$, which enables us to obtain the equivalent eigenvalue problem to problem (\ref{c01}) as follows:
\begin{eqnarray}
\left\{ {\begin{array}{*{20}{l}}
{\frac{{\partial {\psi _1}}}{{\partial t}} = \frac{{{d_H}}}{{{\rho ^2}(t)}}\Delta {\psi _1} + \frac{{{\beta _H}b}}{{{N_H} + L}}\frac{{{b_H}{N_H}}}{{{m_H}}}{\psi _2} - (\frac{{N\dot \rho (t)}}{{\rho (t)}} + {m_H} + {r_H}){\psi _1} + {\lambda _1}{\psi _1},}&{y \in {\Omega _0},t \in ({0 },\tau ],}\\
{\frac{{\partial {\psi _2}}}{{\partial t}} = \frac{{{d_V}}}{{{\rho ^2}(t)}}\Delta {\psi _2} + \frac{{{\beta _V}b}}{{{N_H} + L}}\frac{\Lambda }{{{m_V}}}{\psi _1} - (\frac{{N\dot \rho (t)}}{{\rho (t)}} + {m_V}){\psi _2} + \lambda_1 {\psi _2},}&{y \in {\Omega _0},t \in ({0 },\tau ],}\\
{{\psi _1}(y,0) = {\psi _1}(y,\tau ),\;{\psi _2}(y,0) = P'(0){\psi _2}(y,\tau ),}&{y \in {{\overline \Omega }_0},}\\
{{\psi _1}(y,t) = {\psi _2}(y,t) = 0,}&{y \in \partial {\Omega _0},t \in [0,\tau ].}\\
\end{array}} \right.
\label{c02}
\end{eqnarray}
The eigenvalue problem (\ref{c02}) is introduced primarily to address the analytical difficulties arising from impulses. Through comparison of problems (\ref{c01}) and (\ref{c02}), it can be found that ${\phi _1}(y,{0^ + }) = {\psi _1}(y,0)$ and ${\phi _2}(y,{0^ + }) = P'(0){\phi _2}(y,0) = {\psi _2}(y,0)$ for $y\in\overline\Omega_0$.

The result presented below is related to the existence of the principal eigenvalue for problem (\ref{c01}).
\begin{thm}\label{thm3.1} Problem (\ref{c01}) possesses a principal eigenvalue $\lambda_1$, with the corresponding eigenfunctions satisfying $(\phi_1(y,t), \phi_2(y,t)) \gg (0,0)$ in $\Omega_0 \times [0, \tau]$.
\end{thm}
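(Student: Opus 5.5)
We will work with the equivalent problem (\ref{c02}) and encode it through a Poincar\'e (period) map, then apply the Krein--Rutman theorem. Let $X=C_0(\overline\Omega_0)\times C_0(\overline\Omega_0)$ with positive cone $X^+$ of pairs of nonnegative functions; its interior (in a suitable $C^1_0$ setting, e.g. $X=C^1_0(\overline\Omega_0)^2$) consists of pairs positive in $\Omega_0$ with negative outward normal derivatives on $\partial\Omega_0$. For $(\xi_1,\xi_2)\in X$ let $(w_1,w_2)(y,t)$ solve the linear cooperative system
\begin{eqnarray*}
\left\{\begin{array}{l}
\partial_t w_1=\frac{d_H}{\rho^2(t)}\Delta w_1+\frac{\beta_H b}{N_H+L}\frac{b_HN_H}{m_H}w_2-(\frac{N\dot\rho}{\rho}+m_H+r_H)w_1,\\
\partial_t w_2=\frac{d_V}{\rho^2(t)}\Delta w_2+\frac{\beta_V b}{N_H+L}\frac{\Lambda}{m_V}w_1-(\frac{N\dot\rho}{\rho}+m_V)w_2,
\end{array}\right.
\end{eqnarray*}
with Dirichlet data and $(w_1,w_2)(\cdot,0)=(\xi_1,\xi_2)$. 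We then define the impulsive period map
$$\mathcal{L}(\xi_1,\xi_2)=\big(w_1(\cdot,\tau),\,P'(0)w_2(\cdot,\tau)\big).$$
By (A1), $P'(0)>0$, so multiplication by $P'(0)$ preserves the cone and its interior. Existence and uniqueness for the linear problem follow from the same parabolic theory used in Section 2.

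The key steps are as follows. First, $\mathcal{L}$ is bounded, linear and positive: the system is cooperative since both coupling coefficients are positive, so the maximum-principle argument of Section 2 (the $e^{-kt}$ substitution) gives $w\ge0$ when $\xi\ge0$. Second, $\mathcal{L}$ is compact, by parabolic $L^p$/Schauder smoothing on $[\tau/2,\tau]$ together with Sobolev embedding, which gives boundedness in $C^{1+\alpha}$ and hence compactness into $C^1_0$. Third, $\mathcal{L}$ is strongly positive. If $\xi>0$ with, say, $\xi_1\not\equiv0$, the strong maximum principle makes $w_1>0$ for $t>0$. The coupling term $c\,w_1>0$ then forces $w_2>0$ in $\Omega_0\times(0,\tau]$. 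Hopf's lemma gives negative normal derivatives, so $\mathcal{L}\xi\in\mathrm{int}X^+$.

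The Krein--Rutman theorem then yields $r(\mathcal{L})>0$, a simple eigenvalue with eigenvector $\xi^*\gg0$, and no other eigenvalue with a positive eigenvector. We set
$$\lambda_1=-\frac1\tau\ln r(\mathcal{L}).$$
Since adding $\lambda_1\psi$ to the system multiplies solutions by $e^{\lambda_1 t}$, we take $(\psi_1,\psi_2)=e^{\lambda_1 t}(w_1,w_2)$ with initial data $\xi^*$. Then $\psi_1(\cdot,\tau)=e^{\lambda_1\tau}w_1(\cdot,\tau)=\xi^*_1=\psi_1(\cdot,0)$, and $P'(0)\psi_2(\cdot,\tau)=\psi_2(\cdot,0)$, so (\ref{c02}) holds. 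Passing back to (\ref{c01}) as described before the statement, by setting $\phi_2(\cdot,0)=\psi_2(\cdot,0)/P'(0)$, gives eigenfunctions positive in $\Omega_0\times[0,\tau]$ for $\phi$.

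Principality means that $\lambda_1$ is the unique eigenvalue admitting a positive eigenfunction. It follows because any positive eigenpair of (\ref{c01}) produces a positive eigenvector of $\mathcal{L}$ with eigenvalue $e^{-\lambda\tau}$, and Krein--Rutman uniqueness forces $e^{-\lambda\tau}=r(\mathcal{L})$.

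I expect the main obstacle to be the strong positivity step, specifically propagating positivity from one component to the other and handling the jump at $t=0$. Working with (\ref{c02}) removes the discontinuity, since the factor $P'(0)$ acts only at the endpoint. I would first prove strict positivity of both components at $t=\tau$ via the strong maximum principle applied to each scalar equation with a nonnegative, nontrivial source. The cone's interior is then handled in $C^1_0$, with Hopf's lemma used at the boundary.
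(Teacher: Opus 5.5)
Your proof is correct, but it follows a different route from the paper's written proof.

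\textbf{What the paper does.} The paper never applies Krein--Rutman to a period map. It fixes a large shift $\overline M$ and applies the theorem to the space-time resolvent $\mathcal{A}G=H$, where $H$ solves the shifted problem (\ref{c03}) with source $G$. This operator acts on the space $Y\subset[C^{1,0}(\overline\Omega_0\times[0,\tau])]^2$, in which the twisted condition $\psi_2(y,0)=P'(0)\psi_2(y,\tau)$ is built in. The approach needs the separate Lemma \ref{lem3.2}: existence comes from a Schauder fixed point of an auxiliary map $\mathcal{T}$, and uniqueness from a maximum-principle argument that relies on the size of $\overline M$. The eigenvalue then comes out as $\lambda_1=1/\mathbf{r}(\mathcal{A})-\overline M$.

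\textbf{What you do instead.} You apply Krein--Rutman directly to the impulsive monodromy operator $\mathcal{L}=\mathrm{diag}(1,P'(0))\circ\Phi(\tau)$ on $[C^1_0(\overline\Omega_0)]^2$, where $\Phi(\tau)\xi=(w_1,w_2)(\cdot,\tau)$.
\begin{itemize}
\item It needs only well-posedness of the linear initial-boundary value problem, with no shift and no solvability lemma for an inhomogeneous periodic problem.
\item It gives the explicit formula $\lambda_1=-\tau^{-1}\ln r(\mathcal{L})$. This makes the threshold dichotomy used in Theorems \ref{thm1.2}--\ref{thm1.1} transparent ($\lambda_1<0$ exactly when $r(\mathcal{L})>1$).
\item It also makes the monotone dependence on $P'(0)$ in Lemma \ref{thm3.4} easy to see, since $\mathcal{L}$ is monotone in $P'(0)$.
\item It is arguably closer to the ``Poincar\'e map plus Krein--Rutman'' argument that the abstract and discussion advertise. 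The paper's own proof uses a Poincar\'e-type map only inside Lemma \ref{lem3.2}.
\end{itemize}

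\textbf{What the paper's route buys.} Its Krein--Rutman eigenvector is already a space-time eigenfunction in $\mathrm{Int}(Y^+)$. Positivity on all of $\Omega_0\times[0,\tau]$, including the Hopf condition on $\partial\Omega_0\times[0,\tau]$, therefore comes with the theorem. You instead must rebuild $\psi=e^{\lambda_1 t}w$ and re-derive positivity for $t\in(0,\tau]$ by the two-step strong maximum principle, passing through the coupling term. You do this correctly, and your principality argument via $\mathcal{L}\psi(\cdot,0)=e^{-\lambda\tau}\psi(\cdot,0)$ is sound.

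\textbf{One small fix.} Set $X=[C^1_0(\overline\Omega_0)]^2$ from the outset rather than starting from $[C_0(\overline\Omega_0)]^2$. In the latter space the cone of nonnegative pairs has empty interior, so the strong form of Krein--Rutman cannot be invoked there.
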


Before proving Theorem \ref{thm3.1}, we introduce some notations and preliminary content. Define
$$\begin{array}{l}
{Y} \triangleq \{ G := ({\psi _1},{\psi _2}) \in {[{{C}^{1,0}}({{\overline \Omega }_0} \times [0,\tau ])]^2}\,|\,G = (0,0)\;\forall (y,t) \in \partial {\Omega _0} \times [0,\tau ],\\
\quad \quad {\psi _1}(y,0) = {\psi _1}(y,\tau ),\;{\psi _2}(y,0) = P'(0){\psi _2}(y,\tau )\;\forall y \in {{\overline \Omega }_0}\};
\end{array}$$
$${{Y}^ + } \triangleq \texttt{closure}\{ G \in {Y}\,|\,G(y,t) \gg 0\;\forall (y,t) \in {\Omega _0} \times [0,\tau ],\;\frac{{\partial G}}{{\partial \nu }}(y,t) \ll 0\;\forall (y,t) \in \partial {\Omega _0} \times [0,\tau ]\},$$ and
$$\texttt{Int}({{Y}^ + }) \triangleq \{ G \in {Y}\,|\,G(y,t) \gg 0\;\forall (y,t) \in {\Omega _0} \times [0,\tau ],\;\frac{{\partial G}}{{\partial \nu }}(y,t) \ll 0\;\forall (y,t) \in \partial {\Omega _0} \times [0,\tau ]\} ,$$
where ${Y}$ is a Banach space, ${Y}^+$ stands for the positive cone of ${Y}$, Int$({Y}^+)$ represents the interior of ${Y}^+$, and $\nu$ the outward unit normal vector of $\partial \Omega_0$. Besides, Int$({Y}^+)$ is nonempty, as well as $G(y,t) \gg 0$ and $\frac{\partial G}{\partial \nu}(y,t) \ll 0$ indicate $\psi_1 > 0, \psi_2 > 0$ and $\frac{\partial \psi_1}{\partial \nu} < 0, \frac{\partial \psi_2}{\partial \nu} < 0$, respectively.

With respect to any given vector $G(y,t)\in {Y}$, we consider the auxiliary problem given below:
\begin{eqnarray}
\left\{ {\begin{array}{*{20}{l}}
{\frac{{\partial {\phi _1}}}{{\partial t}} = \frac{{{d_H}}}{{{\rho ^2}(t)}}\Delta {\phi _1} + \frac{{{\beta _H}b}}{{{N_H} + L}}\frac{{{b_H}{N_H}}}{{{m_H}}}{\phi _2} - (\frac{{N\dot \rho (t)}}{{\rho (t)}} + {m_H} + {r_H} + \overline M){\phi _1} + {\psi _1},}&{y \in {\Omega _0},t \in (0,\tau ],}\\
{\frac{{\partial {\phi _2}}}{{\partial t}} = \frac{{{d_V}}}{{{\rho ^2}(t)}}\Delta {\phi _2} + \frac{{{\beta _V}b}}{{{N_H} + L}}\frac{\Lambda }{{{m_V}}}{\phi _1} - (\frac{{N\dot \rho (t)}}{{\rho (t)}} + {m_V} + \overline M){\phi _2} + {\psi _2},}&{y \in {\Omega _0},t \in (0,\tau ],}\\
{{\phi _1}(y,0) = {\phi _1}(y,\tau ),\;{\phi _2}(y,0) = P'(0){\phi _2}(y,\tau ),}&{y \in {{\overline \Omega }_0},}\\
{{\phi _1}(y,t) = {\phi _2}(y,t) = 0,}&{y \in \partial {\Omega _0},t \in [0,\tau ],}
\end{array}} \right.
\label{c03}
\end{eqnarray}
where $\overline M = {-\min _{t \in [0,\tau ]}}\frac{{N\dot \rho (t)}}{{\rho (t)}} + \frac{{{\beta _V}b}}{{{N_H} + L}}\frac{\Lambda }{{{m_V}}}\max \{ 1,P'(0)\}  + \frac{{{\beta _H}b}}{{{N_H} + L}}\frac{{{b_H}{N_H}}}{{{m_H}}}$.
\begin{lem}\label{lem3.2} For any $G(y,t)\in {Y}$, problem (\ref{c03}) admits a unique solution $H:=({\phi _1}(y,t),{\phi _2}(y,t)) \in {Y} \cap {{C}^{1 + \alpha ,(1 + \alpha )/2}}({{\overline \Omega }_0} \times [0,\tau ])$ with any $0<\alpha<1$.
\end{lem}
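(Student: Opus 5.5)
Problem (\ref{c03}) is a linear cooperative parabolic system with a nonlocal-in-time ``twisted'' periodicity condition, so I will reduce it to a fixed-point problem for the initial datum. For $(\xi_1,\xi_2)\in [C_0(\overline\Omega_0)]^2$ (continuous, vanishing on $\partial\Omega_0$), let $(\phi_1,\phi_2)(\cdot,t;\xi)$ solve the initial--boundary value problem formed by the first two equations of (\ref{c03}) with the Dirichlet condition and $(\phi_1,\phi_2)(\cdot,0)=\xi$. Since the coefficients are continuous and $\tau$-periodic (by $\mathbf{(H)}$, $\rho\in C^1$ and $\rho>0$) and $G\in Y$, standard linear parabolic theory \cite{wangmingxin2021,Ladyzhenskaya1968} gives a unique solution. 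Define the affine map
$$\mathcal{T}\xi:=\bigl(\phi_1(\cdot,\tau;\xi),\,P'(0)\phi_2(\cdot,\tau;\xi)\bigr).$$
A solution of (\ref{c03}) is exactly a fixed point of $\mathcal{T}$. So the task is to show that $\mathcal{T}$ has a unique fixed point, and then to upgrade its regularity.

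To get the fixed point, I split $\mathcal{T}\xi=L\xi+g$. Here $L$ is the linear part (the map with $G=0$), and $g=\mathcal{T}0$ is the contribution of the source $G$. The key step is to show that $L$ is a strict contraction in the sup norm, and this is where the choice of $\overline M$ enters. Let $W=(\phi_1,\phi_2)$ solve the homogeneous system. I compare $\pm W$ with the spatially constant supersolution $\overline W(t)=\|\xi\|_\infty e^{-\gamma t}(1,1)$. This requires
$$\gamma\ \le\ \overline M+\min_t\frac{N\dot\rho}{\rho}+\min\{m_H+r_H,m_V\}-\max\Bigl\{\frac{\beta_Hb}{N_H+L}\frac{b_HN_H}{m_H},\frac{\beta_Vb}{N_H+L}\frac{\Lambda}{m_V}\Bigr\}.$$
By the definition of $\overline M$, the right-hand side is at least $\frac{\beta_Vb}{N_H+L}\frac{\Lambda}{m_V}\max\{1,P'(0)\}$ plus positive terms, so it is positive. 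The cooperative comparison principle, which holds because the off-diagonal coefficients are positive, then gives $\|W(\tau)\|_\infty\le e^{-\gamma\tau}\|\xi\|_\infty$. Hence
$$\|L\xi\|_\infty\le \max\{1,P'(0)\}\,e^{-\gamma\tau}\|\xi\|_\infty .$$
\textbf{This is the main obstacle.} I must make sure that $\gamma$ can be chosen large enough that $\max\{1,P'(0)\}e^{-\gamma\tau}<1$. By $\mathbf{(A2)}$ we have $P'(0)=\lim_{v\to0}P(v)/v\le1$, so $\max\{1,P'(0)\}=1$ and any $\gamma>0$ suffices. If $\overline M$ turns out to be insufficient, I would first try replacing the constant supersolution by $e^{-\gamma t}(c_1,c_2)$ with suitably weighted $c_i$, to exploit the margin that $\overline M$ provides. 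Once $L$ is a contraction, the Banach fixed-point theorem gives a unique $\xi^*$, and hence a unique solution $H$ of (\ref{c03}).

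For regularity, $H$ satisfies the boundary and twisted periodicity conditions by construction. The initial datum $\xi^*=\mathcal{T}\xi^*$ is itself a time-$\tau$ value of a parabolic solution, so it is smooth in $y$ and vanishes on $\partial\Omega_0$. I would then re-solve on $[0,\tau]$ starting from this smoother datum and apply the $L^p$ estimates on $\Omega_0\times(0,\tau)$, with $p$ large, since $G$ is bounded. The Sobolev embedding $W^{2,1}_p\hookrightarrow C^{1+\alpha,(1+\alpha)/2}$ for $p>(N+2)/(1-\alpha)$ then yields $H\in C^{1+\alpha,(1+\alpha)/2}(\overline\Omega_0\times[0,\tau])$ for every $\alpha\in(0,1)$, which in particular gives $H\in Y$.

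Two technical points need care. The first is the compatibility of the datum at $t=0$: the datum must lie in $W^{2-2/p}_p$ with zero trace for the estimates to hold up to $t=0$, and this follows from the regularity of the time-$\tau$ map. The second is uniqueness: the difference of two solutions is a fixed point of $L$, which must be $0$ because $L$ is a contraction.
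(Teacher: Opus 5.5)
Your proof is correct, but it takes a different route from the paper's, even though both use the same period map $\mathcal{T}$.

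The paper restricts $\mathcal{T}$ to the sup-norm ball $W$ of radius $\check C$. It asserts that $W$ is invariant (by comparison and the choice of $\overline M$) and that $\mathcal{T}$ is compact (by Sobolev embedding), and gets existence from Schauder's fixed point theorem. It then proves uniqueness separately, by a maximum-principle argument showing that the homogeneous problem (\ref{c05}) has only the trivial solution.

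You instead split off the linear part $L$ and use the spatially constant supersolution $\|\xi\|_\infty e^{-\gamma t}(1,1)$ to show that $L$ is a sup-norm contraction. Banach's theorem on all of $[C_0(\overline\Omega_0)]^2$ then gives existence and uniqueness in one step. Your decay estimate is essentially the quantitative form of the paper's uniqueness argument. What your route buys:
\begin{itemize}
\item No compactness and no invariant ball are needed. The paper's radius $\check C=(\psi_1^*+\psi_2^*)/2$ is not obviously invariant for an arbitrary source.
\item You get the explicit bound $\|\xi^*\|_\infty\le\|\mathcal{T}0\|_\infty/(1-e^{-\gamma\tau})$.
\item Your bootstrap through the fixed-point identity puts the datum in $W^{2-2/p}_p$ with zero trace. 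This justifies $C^{1+\alpha,(1+\alpha)/2}$ regularity up to $t=0$ more carefully than the paper, which applies $W^{2,1}_p$ estimates directly to merely continuous initial data.
\end{itemize}
Both arguments genuinely use $P'(0)\le 1$ from $\mathbf{(A2)}$. You use it to get $\max\{1,P'(0)\}e^{-\gamma\tau}<1$; the paper uses it implicitly in its case $t_0=0$, where the maximum must transfer to $t=\tau$.

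One small slip. Write $a_{12}=\frac{\beta_Hb}{N_H+L}\frac{b_HN_H}{m_H}$ and $a_{21}=\frac{\beta_Vb}{N_H+L}\frac{\Lambda}{m_V}$. Your admissible upper bound for $\gamma$ is not always at least $a_{21}\max\{1,P'(0)\}$ plus positive terms. When $a_{21}>a_{12}$ it equals $a_{21}\max\{1,P'(0)\}+a_{12}-a_{21}+\min\{m_H+r_H,m_V\}$, which drops below $a_{21}\max\{1,P'(0)\}$ once $a_{21}-a_{12}\ge\min\{m_H+r_H,m_V\}$. The correct lower bound is $\min\{a_{12},a_{21}\}+\min\{m_H+r_H,m_V\}$. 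This is still positive, so your conclusion is unaffected.
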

\textbf{Proof:} Set
$$\psi _1^* = \mathop {\max }\limits_{{y \in {{\overline \Omega }_0},t \in [0,\tau ]}} {\psi _1},\;\;\psi _2^* = \mathop {\max }\limits_{y \in {{\overline \Omega }_0},t \in [0,\tau ]} {\psi _2},\;\;\check{C} = (\psi _1^* + \psi _2^*)/2,$$
and define
$${W} \triangleq \{ ({w_1}(y),{w_2}(y)) \in {[{C}({{\overline \Omega }_0})]^2}\,|\,\left| {{w_1}(y)} \right| \le \check{C},\left| {{w_2}(y)} \right| \le \check{C},\;{w_1}(y) = {w_2}(y) = 0\;\forall y \in \partial {\Omega _0}\} .$$
By embedding theorem and classical $L^p$ theory\cite{Ladyzhenskaya1968}, the solution $(\eta_1(y,t), \eta_2(y, t))$ to the following initial-boundary value problem:
\begin{eqnarray}
\left\{ {\begin{array}{*{20}{l}}
{\frac{{\partial {\eta _1}}}{{\partial t}} = \frac{{{d_H}}}{{{\rho ^2}(t)}}\Delta {\eta _1} + \frac{{{\beta _H}b}}{{{N_H} + L}}\frac{{{b_H}{N_H}}}{{{m_H}}}{\eta _2} - (\frac{{N\dot \rho (t)}}{{\rho (t)}} + {m_H} + {r_H} + \overline M){\eta _1} + {\psi _1},}&{y \in {\Omega _0},t \in (0,\tau ],}\\
{\frac{{\partial {\eta _2}}}{{\partial t}} = \frac{{{d_V}}}{{{\rho ^2}(t)}}\Delta {\eta _2} + \frac{{{\beta _V}b}}{{{N_H} + L}}\frac{\Lambda }{{{m_V}}}{\eta _1} - (\frac{{N\dot \rho (t)}}{{\rho (t)}} + {m_V} + \overline M){\eta _2} + {\psi _2},}&{y \in {\Omega _0},t \in (0,\tau ],}\\
{{\eta _1}(y,0) = {w_1}(y),\;{\eta _2}(y,0) = {w_2}(y),}&{y \in {{\overline \Omega }_0},}\\
{{\eta _1}(y,t) = {\eta _2}(y,t) = 0,}&{y \in \partial {\Omega _0},t \in [0,\tau ]}
\end{array}} \right.
\label{c04}
\end{eqnarray}
satisfies $({\eta _1}(y,t),{\eta _2}(y,t)) \in {{Y}_p^{2,1}({\Omega _0} \times [0,\tau ]) \cap {{C}^{1 + \alpha ,(1 + \alpha )/2}}({{\overline \Omega }_0} \times [0,\tau ])}$ for any $p>0$ and $0<\alpha<1$. Given any arbitrary vector $({w_1}(y),{w_2}(y)) \in{W}$, we define $$\mathcal{T}(({w_1}(y),{w_2}(y)))=(\eta_1(y,\tau),P'(0)\eta_2(y,\tau)).$$
According to the comparison principle and the selection of $\overline M$, it can be readily concluded that $\mathcal{T}(({w_1}(y),{w_2}(y)))\in {W}$ for $({w_1}(y),{w_2}(y))\in {W}$. The Sobolev embedding theorem indicates that $\mathcal{T}$ is compact, which, combined with the fixed point theory, ensures that $\mathcal{T}$ possesses at least one fixed point $({w_1}(y),{w_2}(y))\in {W}$, and a solution $(\eta_1, \eta_2)(y, t)$ to problem (\ref{c04}) with $({w_1}(y),{w_2}(y))$ as the initial value. Thus,  $(\eta_1, \eta_2)(y, t)$ is the solution to problem (\ref{c03}), and $({\eta _1},{\eta _2}) \in {[{Y} \cap {{C}^{1 + \alpha ,(1 + \alpha )/2}}({{\overline \Omega }_0} \times [0,\tau ])]^2}$.

We now proceed to prove the uniqueness of the solution for problem (\ref{c03}). Actually, we merely need to prove that the homogeneous linear problem of problem (\ref{c03}) has only the trivial solution (0,0), as follows:
\begin{eqnarray}
\left\{ {\begin{array}{*{20}{l}}
{\frac{{\partial {\phi _1}}}{{\partial t}} = \frac{{{d_H}}}{{{\rho ^2}(t)}}\Delta {\phi _1} + \frac{{{\beta _H}b}}{{{N_H} + L}}\frac{{{b_H}{N_H}}}{{{m_H}}}{\phi _2} - (\frac{{N\dot \rho (t)}}{{\rho (t)}} + {m_H} + {r_H} + \overline M){\phi _1},}&{y \in {\Omega _0},t \in (0,\tau ],}\\
{\frac{{\partial {\phi _2}}}{{\partial t}} = \frac{{{d_V}}}{{{\rho ^2}(t)}}\Delta {\phi _2} + \frac{{{\beta _V}b}}{{{N_H} + L}}\frac{\Lambda }{{{m_V}}}{\phi _1} - (\frac{{N\dot \rho (t)}}{{\rho (t)}} + {m_V} + \overline M){\phi _2},}&{y \in {\Omega _0},t \in (0,\tau ],}\\
{{\phi _1}(y,0) = {\phi _1}(y,\tau ),\;{\phi _2}(y,0) = P'(0){\phi _2}(y,\tau ),}&{y \in {{\overline \Omega }_0},}\\
{{\phi _1}(y,t) = {\phi _2}(y,t) = 0,}&{y \in \partial {\Omega _0},t \in [0,\tau ],}
\end{array}} \right.
\label{c05}
\end{eqnarray}
By contradiction, suppose that
$$\max \{ \mathop {\max }\limits_{y \in {{\overline \Omega }_0},t \in [0,\tau ]} {\phi _1}(y,t),\mathop {\max }\limits_{y \in {{\overline \Omega }_0},t \in [0,\tau ]} {\phi _2}(y,t)\}  > 0\;\;or\;\;\min \{ \mathop {\min }\limits_{y \in {{\overline \Omega }_0},t \in [0,\tau ]} {\phi _1}(y,t),\mathop {\min }\limits_{y \in {{\overline \Omega }_0},t \in [0,\tau ]} {\phi _2}(y,t)\}  < 0.$$
Without loss of generality, we presume that there exists a point $(y_0, t_0)\in {\overline \Omega }_0\times [0,\tau ]$ so that $$\max \{ {\max _{y \in {{\overline \Omega }_0},t \in [0,\tau ]}}{\phi _1}(y,t),{\max _{y \in {{\overline \Omega }_0},t \in [0,\tau ]}}{\phi _2}(y,t)\}  = {\phi _2}({y_0},{t_0}) > 0.$$
In the second equation of problem (\ref{c05}), it shows that
\begin{eqnarray}
\frac{{\partial {\phi _2}({y_0},\tau )}}{{\partial t}} - \frac{{{d_V}}}{{{\rho ^2}(t)}}\Delta {\phi _2}({y_0},\tau ) = \frac{{{\beta _V}b}}{{{N_H} + L}}\frac{\Lambda }{{{m_V}}}{\phi _1}({y_0},\tau ) - (\frac{{N\dot \rho (t)}}{{\rho (t)}} + {m_V} + \overline M){\phi _2}({y_0},\tau ).
\label{c06}
\end{eqnarray}
When $t_0=0$, from (\ref{c06}) and the boundary conditions for problem (\ref{c05}), it follows that
\begin{eqnarray}
\frac{{\partial {\phi _2}({y_0},0)}}{{\partial t}} - \frac{{{d_V}}}{{{\rho ^2}(t)}}\Delta {\phi _2}({y_0},0) = \frac{{{\beta _V}b}}{{{N_H} + L}}\frac{\Lambda }{{{m_V}}}P'(0){\phi _1}({y_0},0) - (\frac{{N\dot \rho (t)}}{{\rho (t)}} + {m_V} + \overline M){\phi _2}({y_0},0).
\label{c07}
\end{eqnarray}
Evidently, the left-hand side of (\ref{c06}) is nonnegative, but the right-hand side is negative due to $\overline M > {-\min _{t \in [0,\tau ]}}\frac{{N\dot \rho (t)}}{{\rho (t)}} + \frac{{{\beta _V}b}}{{{N_H} + L}}\frac{\Lambda }{{{m_V}}}\max \{ 1,P'(0)\} $, which leads to a contradiction. When $t_0 \in (0,\tau ]$, we have
$$\frac{{\partial {\phi _2}({y_0},{t_0})}}{{\partial t}} - \frac{{{d_V}}}{{{\rho ^2}(t)}}\Delta {\phi _2}({y_0},{t_0}) \ge 0,\;\;\frac{{{\beta _V}b}}{{{N_H} + L}}\frac{\Lambda }{{{m_V}}}{\phi _1}({y_0},{t_0}) - (\frac{{N\dot \rho (t)}}{{\rho (t)}} + {m_V} + \overline M){\phi _2}({y_0},{t_0}) < 0,$$
which stands in contradiction to (\ref{c06}). Thus, the proof is completed.\epf\vspace{0.5\baselineskip}

Now, we present the proof of Theorem \ref{thm3.1} based on Lemma \ref{lem3.2}.\vspace{0.5\baselineskip}

\noindent\textbf{Proof of Theorem \ref{thm3.1}:} We define an operator $\mathcal{A }G = H$, where $G$ and $H$ are the solutions to problems (\ref{c02}) and (\ref{c03}), respectively. This operator $\mathcal{A}$ is strongly positive with respect to ${Y}$ as well as a linear compact operator. The former follows from the strong maximum principle and Hopf boundary lemma\cite{Protter1984}, and the latter is due to the fact that the embedding ${C^{1 + \alpha ,(1 + \alpha )/2}}\hookrightarrow{C^{1,0}}$ is compact.

It follows from the strong version of the Kerin-Rutman theorem\cite{Krein1950} that $\mathbf{r}(\mathcal{A})>0$ is an algebraically simple eigenvalue of $\mathcal{A}$ with a corresponding eigenvector $H \in \texttt{Int}({Y}^+)$, and there exists no other eigenvalue that admits a positive eigenvector.

Accordingly, ${\lambda _1} = 1/\mathbf{r}(\mathcal{A}) - \overline M$ is the eigenvalue of problem \ref{c01}, and the corresponding eigenfunction satisfies $H=({\phi _1}(y,t),{\phi _2}(y,t))\gg(0,0)$ in ${{\Omega _0}}\times{[0,\tau ]}$.\epf\vspace{0.5\baselineskip}

Analogue to Theorem 5.2 in \cite{zq2025}, the following properties of the principal eigenvalue ${\lambda _1}$ are given.

\begin{lem}\label{thm3.4} The principal eigenvalue $\lambda_1$ is strictly monotonically decreasing with respect to $P'(0)$ and $\Omega_0$, respectively.
\end{lem}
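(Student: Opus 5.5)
We prove both monotonicity claims in the same way: compare the two principal eigenfunctions through an adjoint (dual) periodic eigenvalue problem. For the dependence on $P'(0)$, write $\lambda_1(p)$ for the principal eigenvalue of (\ref{c02}) with $P'(0)=p$, and fix $0<p_1<p_2$. Let $(\psi_1,\psi_2)\gg 0$ be the eigenfunction for $p_1$, which exists by Theorem \ref{thm3.1} applied to the equivalent problem (\ref{c02}).

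The working object is the Poincar\'e (period) map of the linear cooperative system on $[0,\tau]$, followed by the impulse. Let $\Phi(\tau)$ denote the evolution operator of the linear system (first two equations of (\ref{c01}) without the $\lambda_1$ term) from $0$ to $\tau$. Set
$$Q_p:=\mathrm{diag}(1,p)\,\Phi(\tau),$$
acting on $[C_0(\overline\Omega_0)]^2$. The periodicity condition shows that $e^{-\lambda_1\tau}$ is an eigenvalue of $Q_p$ with a positive eigenvector $(\psi_1(\cdot,0),\psi_2(\cdot,0))$. Hence $e^{-\lambda_1(p)\tau}=\mathbf r(Q_p)$, the principal eigenvalue given by Krein--Rutman. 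This is the ``Poincar\'e map'' viewpoint mentioned in the abstract.

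Monotonicity in $p$ then reduces to strict monotonicity of $\mathbf r(Q_p)$. Since $\Phi(\tau)$ is strongly positive (strong maximum principle and Hopf lemma, as used in the proof of Theorem \ref{thm3.1}), $Q_{p_2}\ge Q_{p_1}$. Moreover $Q_{p_2}w-Q_{p_1}w=(0,(p_2-p_1)(\Phi(\tau)w)_2)$ is nonzero and positive for $w\gg0$. To get strictness I plan to apply $\Phi(\tau)$ once more and use the strong positivity of $\Phi(\tau)$. Alternatively, with $w$ the eigenvector of $Q_{p_1}$, note that $Q_{p_2}w\ge r_1 w$ with strict inequality in the second component. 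Then $Q_{p_2}^2 w\ge r_1^2 w+\delta w$ for some $\delta>0$, since $w\in\mathrm{Int}$ and the difference lies in $\mathrm{Int}$ after one more application. This gives $\mathbf r(Q_{p_2})>r_1$, hence $\lambda_1(p_2)<\lambda_1(p_1)$.

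For the domain, take $\Omega_0^1\subsetneq\Omega_0^2$ with respective eigenpairs $(\lambda^1,\phi^1)$ and $(\lambda^2,\phi^2)$. Suppose $\lambda^1\le\lambda^2$. The function $\phi^2$ is strictly positive on $\partial\Omega_0^1\cap\Omega_0^2$, which is nonempty by connectedness of $\Omega_0^2$ and strict inclusion. Hence, on $\Omega_0^1$, $\phi^2$ is a strict supersolution of the problem satisfied by $\phi^1$, with the $\lambda^1$-shifted system and the same impulsive periodicity condition. Define $s^*=\inf\{s>0: s\phi^2\ge\phi^1 \text{ on } \overline\Omega_0^1\times[0,\tau]\}$, which is finite by Hopf. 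Then $w=s^*\phi^2-\phi^1\ge0$ satisfies a cooperative linear system with $w_t-\dots\ge(\lambda^1-\lambda^2)w\cdot$ terms, so it is a supersolution, and it satisfies the same jump and periodicity relations. By the strong maximum principle applied over the period, either $w\gg0$ with negative normal derivative where $w=0$ on the boundary, contradicting minimality of $s^*$, or $w\equiv0$, contradicting $w>0$ on $\partial\Omega_0^1\cap\Omega_0^2$. Hence $\lambda^1>\lambda^2$.

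The main obstacle is handling the impulse and periodic coupling in this touching argument. The minimum of $w$ could occur at $t=0$, where the condition $w_2(\cdot,0)=P'(0)w_2(\cdot,\tau)$ must be used to transfer it to $t=\tau$. I would first work in the $\psi$-formulation (\ref{c02}) and extend $w$ periodically via the map $Q_p$, so that positivity propagates across the pulse because $P'(0)>0$. If this gets messy, I would instead rely on the spectral-radius comparison of $Q_p$, as in the first part, applied to the domain inclusion: extend by zero, use the comparison principle $\Phi^{\Omega^1}(\tau)\le\Phi^{\Omega^2}(\tau)$ and strictness from strong positivity.
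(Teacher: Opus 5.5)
Your proof is correct. The paper gives no argument of its own for this lemma; it only defers to Section 5 of \cite{zq2025}. Your proposal is therefore a self-contained proof rather than a variant of one in the text.

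\textbf{Comparison of the two routes.} For $P'(0)$ you use the period map, $e^{-\lambda_1(p)\tau}=\mathbf r(\mathrm{diag}(1,p)\Phi(\tau))$. This agrees with the paper's $\lambda_1=1/\mathbf r(\mathcal A)-\overline M$ by Krein--Rutman uniqueness of positive eigenvectors. Your sweeping argument for $\Omega_0$ would have handled $P'(0)$ equally well. If $p_1<p_2$ and $\lambda_1(p_2)\ge\lambda_1(p_1)$, then $w=s^*\phi^{(2)}-\phi^{(1)}$ is a supersolution of the $p_1$-problem with the strict jump $w_2(y,0^+)=p_1w_2(y,0)+s^*(p_2-p_1)\phi^{(2)}_2(y,0)$, and the same touching contradiction follows.

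What each route buys:
\begin{itemize}
\item The period-map route gives an explicit formula. From it, continuity of $\lambda_1$ in $P'(0)$ follows by perturbation of the simple isolated eigenvalue $\mathbf r(Q_p)$; Section 5 uses this continuity without proof.
\item The sweeping route treats both parameters uniformly and needs nothing beyond the maximum principle.
\end{itemize}

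\textbf{The $t=0$ obstacle you flag in the domain argument does not arise.}
\begin{itemize}
\item If $w(\cdot,0)\not\equiv0$, then $w(\cdot,0^+)=\mathrm{diag}(1,P'(0))w(\cdot,0)\not\equiv0$. The strong maximum principle and Hopf lemma then make $w$ positive on $(0,\tau]$, with negative normal derivative where it vanishes on $\partial\Omega_0^1$. By periodicity this also holds at $t=0$, which contradicts minimality of $s^*$.
\item If $w(\cdot,0)\equiv0$, then $w(\cdot,\tau)\equiv0$. The strong maximum principle forces $w\equiv0$, contradicting $w>0$ on $\partial\Omega_0^1\cap\Omega_0^2$.
\end{itemize}

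\textbf{Three details in the spectral part need tidying.}
\begin{itemize}
\item The cone of $[C_0(\overline\Omega_0)]^2$ has empty interior. Strong positivity and $\mathrm{Int}$ must therefore refer to $[C^1_0(\overline\Omega_0)]^2$, where $\Phi(\tau)$ is compact by parabolic smoothing.
\item The bound $Q_{p_2}(0,h)\ge\delta w$ holds because $Q_{p_2}(0,h)$ is interior, not because $w$ is. Interiority of $Q_{p_2}(0,h)$ uses that both cross-coupling coefficients are positive, so the datum $(0,h)$ also makes the first component positive.
\item The $C^1$ cone is not normal. To pass from $Q_{p_2}^2w\ge(r_1^2+\delta)w$ to $\mathbf r(Q_{p_2})^2\ge r_1^2+\delta$, use $\|Q_{p_2}^{2n}\|\ge(r_1^2+\delta)^n\|w\|_\infty/\|w\|_{C^1}$, or pair with the positive eigenfunctional of the adjoint.
\end{itemize}
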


The specific role of Theorem \ref{thm3.4} is embodied in the numerical simulations, and the detailed proof can be found in Section 5 of  \cite{zq2025}. We omit here.

\section{The dynamical behaviors of the solution}

In this section, we study the long-time behavior of the solution to problem (\ref{a08}) with respect to the principal eigenvalue $\lambda_1$ using the upper-lower solution method and the comparison principle.

The definitions of upper and lower solutions for the initial problem (\ref{a08}) and the periodic problem (\ref{a09}) are presented. Define
$$PC \triangleq PC({{\overline \Omega }_0} \times [0,\infty )) \triangleq \{ (u,v)|u,v \in { \cap _{n \in N}}C({{\overline \Omega }_0} \times (n\tau ,(n + 1)\tau ])\}$$
and
$$P{C^{2,1}} \triangleq P{C^{2,1}}({\Omega _0} \times (0,\infty )) \triangleq \{ (u,v)|u,v \in { \cap _{n \in N}}{C^{2,1}}({\Omega _0} \times (n\tau ,(n + 1)\tau ])\} .$$
\begin{defi}\label{defi4.1}Assume that $(\tilde{u}, \tilde{v}),(\hat{u}, \hat{v}) \in PC^{2,1} \cap PC$. If $(0,0) \le (\hat{u}, \hat{v}) \le (\tilde{u}, \tilde{v})$ holds and the following conditions satisfy

\begin{eqnarray}
\left\{ {\begin{array}{*{20}{l}}
{\frac{{\partial \tilde u}}{{\partial t}} \ge \frac{{{d_H}}}{{{\rho ^2}(t)}}\Delta \tilde u + \frac{{{\beta _H}b}}{{{N_H} + L}}(\frac{{{b_H}{N_H}}}{{{m_H}}} - \tilde u)\tilde v - (\frac{{N\dot \rho (t)}}{{\rho (t)}} + {m_H} + {r_H})\tilde u,}&{y \in {\Omega _0},t \in ({{(n\tau )}^ + },(n + 1)\tau ],}\\
{\frac{{\partial \hat u}}{{\partial t}} \le \frac{{{d_H}}}{{{\rho ^2}(t)}}\Delta \hat u + \frac{{{\beta _H}b}}{{{N_H} + L}}(\frac{{{b_H}{N_H}}}{{{m_H}}} - \hat u)\hat v - (\frac{{N\dot \rho (t)}}{{\rho (t)}} + {m_H} + {r_H})\hat u,}&{y \in {\Omega _0},t \in ({{(n\tau )}^ + },(n + 1)\tau ],}\\
{\frac{{\partial \tilde v}}{{\partial t}} \ge \frac{{{d_V}}}{{{\rho ^2}(t)}}\Delta \tilde v + \frac{{{\beta _V}b}}{{{N_H} + L}}(\frac{\Lambda }{{{m_V}}} - \tilde v)\tilde u - (\frac{{N\dot \rho (t)}}{{\rho (t)}} + {m_V})\tilde v,}&{y \in {\Omega _0},t \in ({{(n\tau )}^ + },(n + 1)\tau ],}\\
{\frac{{\partial \hat v}}{{\partial t}} \le \frac{{{d_V}}}{{{\rho ^2}(t)}}\Delta \hat v + \frac{{{\beta _V}b}}{{{N_H} + L}}(\frac{\Lambda }{{{m_V}}} - \hat v)\hat u - (\frac{{N\dot \rho (t)}}{{\rho (t)}} + {m_V})\hat v,}&{y \in {\Omega _0},t \in ({{(n\tau )}^ + },(n + 1)\tau ],}\\
{\tilde u(y,{{(n\tau )}^ + }) \ge \tilde u(y,n\tau ),\;\tilde v(y,{{(n\tau )}^ + }) \ge P(\tilde v(y,n\tau )),}&{y \in {\Omega _0},}\\
{\hat u(y,{{(n\tau )}^ + }) \le \hat u(y,n\tau ),\;\hat v(y,{{(n\tau )}^ + }) \le P(\hat v(y,n\tau )),}&{y \in {\Omega _0},}\\
{\hat u(y,t) = 0 \le \tilde u(y,t),\;\hat v(y,t) = 0 \le \tilde v(y,t),}&{y \in \partial {\Omega _0},t > 0,n = 0,1, \cdots, }
\end{array}} \right.
\label{d01}
\end{eqnarray}
and
\begin{eqnarray}
\left\{ {\begin{array}{*{20}{l}}
{\tilde u(y,0) \ge {u_0}(y),\;\tilde v(y,0) \ge {v_0}(y),}&{y \in {{\overline \Omega }_0},}\\
{\hat u(y,0) \le {u_0}(y),\;\hat v(y,0) \le {v_0}(y),}&{y \in {{\overline \Omega }_0}},
\end{array}} \right.
\label{d02}
\end{eqnarray}
then $(\tilde{u}, \tilde{v})$ and $(\hat{u}, \hat{v})$ are called the ordered upper and lower solutions to problem (\ref{a08}).
\end{defi}
\begin{defi}\label{defi4.2} Let $n=0$ in (\ref{d01}) and change the variables from $(\tilde{u}, \tilde{v}),(\hat{u}, \hat{v})$ to $(\widetilde{U}, \widetilde{V}),(\widehat{U}, \widehat{V})$. Suppose that $(\widetilde{U}, \widetilde{V}),(\widehat{U}, \widehat{V}) \in PC^{2,1} \cap PC$. If $(0,0) \le (\widehat{U}, \widehat{V}) \le (\widetilde{U}, \widetilde{V})$ holds and condition (\ref{d02}) is replaced by
\begin{eqnarray*}
\left\{ {\begin{array}{*{20}{l}}
{\widetilde U(y,0) \ge \widetilde U(y,\tau ),\;\widetilde V(y,0) \ge \widetilde V(y,\tau ),}&{y \in {{\overline \Omega }_0},}\\
{\widehat U(y,0) \le \widehat U(y,\tau ),\;\widehat V(y,0) \le \widehat V(y,\tau ),}&{y \in {{\overline \Omega }_0},}
\end{array}} \right.
\end{eqnarray*}
then $(\widetilde{U}, \widetilde{V})$ and $(\widetilde{U}, \widetilde{V})$ are called the ordered upper and lower solutions to problem (\ref{a09}).
\end{defi}

Combined with Lemma 3.1 of \cite{my2021}, the comparison principle for problem (\ref{a08}) with impulses is presented below.

\begin{lem}\label{lem4.1} Provided that $(\tilde{u}(y,t ), \tilde{v}(y,t ))$ and $(\hat{u}(y,t ), \hat{v}(y,t ))$ are the ordered upper-lower solutions to problem (\ref{a08}), the unique solution $(u(y,t ), v(y,t ))$ to problem (\ref{a08}) is found to satisfy
$$(\hat u(y,t),\hat v(y,t)) \le (u(y,t),v(y,t)) \le (\tilde u(y,t),\tilde v(y,t)),\;\; y \in {\Omega _0},t \ge 0.$$
\end{lem}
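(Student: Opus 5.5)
The comparison principle will be proved interval by interval. On each open interval $((n\tau)^+,(n+1)\tau]$ the system has no impulses and is a cooperative (quasi-monotone nondecreasing) parabolic system, because the coupling terms are nondecreasing in the other component. On the interval itself the argument is the classical comparison principle for cooperative systems. The impulse condition is then used to pass from one interval to the next, and induction on $n$ finishes the proof.

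\textbf{Interval $(0,\tau]$.} Set $w_1=\tilde u-u$ and $w_2=\tilde v-v$. Subtracting the equations in (\ref{a08}) from the inequalities in (\ref{d01}) and using the mean value theorem gives
\begin{eqnarray*}
\frac{\partial w_1}{\partial t}-\frac{d_H}{\rho^2(t)}\Delta w_1 \ge c_{11}(y,t)w_1+c_{12}(y,t)w_2,\quad
\frac{\partial w_2}{\partial t}-\frac{d_V}{\rho^2(t)}\Delta w_2 \ge c_{21}(y,t)w_1+c_{22}(y,t)w_2 .
\end{eqnarray*}
The coefficients are
\begin{eqnarray*}
c_{12}=\frac{\beta_H b}{N_H+L}\Big(\frac{b_HN_H}{m_H}-u\Big),\qquad c_{21}=\frac{\beta_V b}{N_H+L}\Big(\frac{\Lambda}{m_V}-v\Big).
\end{eqnarray*}
By the bounds from the well-posedness theorem, $u\le b_HN_H/m_H$ and $v\le \Lambda/m_V$, so $c_{12},c_{21}\ge0$. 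All the $c_{ij}$ are bounded, since $u,v,\tilde u,\tilde v$ are bounded and $\rho\in C^1$ with $\rho>0$.

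The boundary data satisfy $w_i\ge0$ on $\partial\Omega_0$. The initial data satisfy $w_i(y,0^+)\ge0$: for $w_1$ this uses $\tilde u(y,0^+)\ge\tilde u(y,0)\ge u_0=u(y,0^+)$. For $w_2$ it uses $\tilde v(y,0^+)\ge P(\tilde v(y,0))\ge P(v_0)=v(y,0^+)$, by monotonicity of $P$ from (A1).

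The substitution $W_i=e^{-kt}w_i$ with $k$ large makes the diagonal coefficients negative. The same contradiction argument as in the nonnegativity step of the well-posedness proof then applies: take a negative minimum of $\min(W_1,W_2)$ and compare signs, using the off-diagonal signs. This gives $w_1,w_2\ge0$ on $\overline\Omega_0\times(0,\tau]$. The lower-solution inequality $(\hat u,\hat v)\le(u,v)$ follows in the same way. On $(\hat u,\hat v)$ one does not need an a priori upper bound, because the mean-value coefficients are evaluated along $u,v$ and remain nonnegative. If needed, one can take the intermediate point so that the sign $b_HN_H/m_H-\hat u\ge0$ is not required, since $\hat u\le\tilde u$ and the nonlinearity can be rearranged as $(\frac{b_HN_H}{m_H}-u)(\hat v-v)-(\hat u-u)\hat v$.

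\textbf{Induction step.} Suppose the ordering holds at $t=n\tau$. The impulse conditions together with monotonicity of $P$ give
\begin{eqnarray*}
\tilde v((n\tau)^+)\ge P(\tilde v(n\tau))\ge P(v(n\tau))=v((n\tau)^+)\ge P(\hat v(n\tau))\ge\hat v((n\tau)^+).
\end{eqnarray*}
For the $u$-component the ordering at $(n\tau)^+$ follows because $u$ is continuous across $n\tau$. The interval argument then applies on $((n\tau)^+,(n+1)\tau]$ and the induction closes.

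The main obstacle is keeping the off-diagonal coefficients nonnegative, i.e.\ preserving quasi-monotonicity. The fix is to write the differences so that the coefficient of $w_2$ involves $\frac{b_HN_H}{m_H}-u$ and the coefficient of $w_1$ involves $\frac{\Lambda}{m_V}-v$, both evaluated on the true solution, where the invariant bounds from Section 2 guarantee the sign. A second point is that the functions belong only to $PC^{2,1}\cap PC$. This means the maximum principle has to be applied separately on each half-open interval, with the right-limit data carried forward through the impulse.
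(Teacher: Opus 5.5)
\textbf{Gap: the sign of the off-diagonal coefficients.} Your architecture is the standard one, and it is what the paper tacitly relies on. The paper itself only cites Lemma 3.1 of \cite{my2021}, which is a scalar comparison result, so no cooperativity issue arises there. You linearize on each impulse-free interval, absorb the diagonal terms with $e^{-kt}$, run the negative-minimum contradiction, and cross each impulse using the monotonicity of $P$. Evaluating the off-diagonal coefficients on the true solution rather than on $(\tilde u,\tilde v)$ is the right choice, since upper solutions used later, such as $(\overline C,\overline C)$ with $\overline C\ge b_HN_H/m_H$, leave the box.

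The gap is exactly the step you flag as the main obstacle. You obtain $c_{12}=\frac{\beta_Hb}{N_H+L}(\frac{b_HN_H}{m_H}-u)\ge0$ and $c_{21}\ge0$ from the bound $u\le b_HN_H/m_H$, $v\le\Lambda/m_V$ in Section 2. That bound is only asserted there, and it does not follow from (H). At $u=b_HN_H/m_H$ the $u$-reaction equals $-(\frac{N\dot\rho}{\rho}+m_H+r_H)\frac{b_HN_H}{m_H}$. This is positive whenever the domain contracts at relative rate exceeding $(m_H+r_H)/N$, and (H) puts no lower bound on $\dot\rho$. The simulations use $\rho=0.75e^{2(1-\cos\pi t)}$, for which $\dot\rho/\rho$ reaches $-2\pi$ while $m_H+r_H=0.4$.

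No rearrangement of the nonlinearity saves the argument. At a point where $w_1$ first reaches $0$ one has $\tilde u=u$, so $(\frac{b_HN_H}{m_H}-\tilde u)\tilde v-(\frac{b_HN_H}{m_H}-u)v=(\frac{b_HN_H}{m_H}-u)w_2$ exactly. If $u>b_HN_H/m_H$ there while $w_2>0$, this coupling pushes $w_1$ negative and no contradiction results. The system is genuinely non-cooperative at such points, so this is not a bookkeeping issue.

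\textbf{How to close it.} Add a hypothesis such as $\frac{N\dot\rho(t)}{\rho(t)}+\min\{m_H+r_H,m_V\}\ge0$ for all $t$, and prove the box bound directly instead of citing it. The function $z=u-\frac{b_HN_H}{m_H}$ satisfies
$z_t-\frac{d_H}{\rho^2}\Delta z+\big(\frac{\beta_Hb}{N_H+L}v+\frac{N\dot\rho}{\rho}+m_H+r_H\big)z=-\big(\frac{N\dot\rho}{\rho}+m_H+r_H\big)\frac{b_HN_H}{m_H}\le0,$
with $z<0$ on $\partial\Omega_0$ and $z(\cdot,0)\le0$. This is a scalar problem, so the weak maximum principle applies with no cooperativity needed. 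The same computation works for $v-\Lambda/m_V$, and at impulse times $P(v)\le v$ from (A2) keeps $v\le\Lambda/m_V$. With that in hand, the rest of your proof goes through. This defect is shared with the paper, whose comparison lemma and Section 2 bound are stated under (H) alone; it is not one you introduced.

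\textbf{A minor correction.} At $(n\tau)^+$, the ordering $\tilde u\ge u$ uses the upper solution's jump inequality $\tilde u((n\tau)^+)\ge\tilde u(n\tau)$, not merely the continuity of $u$.
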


In the following, we study the long-term dynamics of solutions to problem (\ref{a08}) when $\lambda_1 < 0$. For this purpose, we first prove Theorem \ref{thm1.2}, which is partitioned into two parts: one dedicated to the existence of $\tau$-periodic solutions, and the other to their uniqueness.\vspace{0.5\baselineskip}

\noindent\textbf{Proof of Theorem \ref{thm1.2}:} \textbf{Step 1.} We set \begin{eqnarray}
(\overline U(y,t),\overline V(y,t)) = (\overline C,\overline C),
\label{d05}
\end{eqnarray}
where $$\overline C \ge \max \{ \frac{{ - \mathop {\min }\limits_{t \in [0,\tau ]} \frac{{N\dot \rho (t)}}{{\rho (t)}} + \frac{{{\beta _H}b}}{{{N_H} + L}}\frac{{{b_H}{N_H}}}{{{m_H}}}}}{{\frac{{{\beta _H}b}}{{{N_H} + L}}}},\frac{{ - \mathop {\min }\limits_{t \in [0,\tau ]} \frac{{N\dot \rho (t)}}{{\rho (t)}} + \frac{{{\beta _V}b}}{{{N_H} + L}}\frac{\Lambda }{{{m_V}}}}}{{\frac{{{\beta _V}b}}{{{N_H} + L}}}},1\}.$$
The first and third equations in (\ref{d01}) after the substitution, periodic condition $(\overline U(y,0),\overline V(y,0)) = (\overline U(y,\tau ),\overline V(y,\tau ))$ for $y \in {\overline\Omega _0}$, and the boundary condition $(\overline{U}(y,t), \overline{V}(y,t))>(0,0)$ for $y \in \partial {\Omega _0},t>0$ hold. Besides, it follows that
$$\overline U(y,{0^ + }) - \overline U(y,0)=0$$
and
$$\overline V(y,{0^ + }) - P(\overline V(y,0)) = \overline C - P(\overline C) \ge 0$$
due to Assumption $\mathbf{(A2)}$. Therefore, according to Definition \ref{defi4.2}, $(\overline{U}(y,t), \overline{V}(y,t))$ is an upper solution to problem (\ref{a09}).

Next, a lower solution to problem (\ref{a09}) is constructed. Define
\begin{eqnarray*}
\underline U (y,t) = \left\{ {\begin{array}{*{20}{l}}
{\varepsilon {\phi _1}(y,t),}&{y \in {{\overline \Omega }_0},t = 0,}\\
{\varepsilon {e^{({\lambda _1} + \mu )\tau}}{\phi _1}(y,t),}&{y \in {{\overline \Omega }_0},t = {0^ + },}\\
{\varepsilon {e^{ - ({\lambda _1} + \mu )(t - \tau )}}{\phi _1}(y,t),}&{y \in {{\overline \Omega }_0},t = ({0^ + },\tau ]}
\end{array}} \right.
\label{d05'}
\end{eqnarray*}
and
\begin{eqnarray*}
\underline V (y,t) = \left\{ {\begin{array}{*{20}{l}}
{\varepsilon {\phi _2}(y,t),}&{y \in {{\overline \Omega }_0},t = 0,}\\
{\varepsilon {e^{({\lambda _1} + \mu )\tau}}{\phi _2}(y,t),}&{y \in {{\overline \Omega }_0},t = {0^ + },}\\
{\varepsilon {e^{ - ({\lambda _1} + \mu )(t - \tau )}}{\phi _2}(y,t),}&{y \in {{\overline \Omega }_0},t = ({0^ + },\tau ]},
\end{array}} \right.
\label{d05''}
\end{eqnarray*}
where $(\phi_1(y,t), \phi_2(y,t))$ stands for a pair of eigenfunctions corresponding to problem (\ref{c01}), and $\varepsilon$ is a sufficiently small positive constant so that
$$\underline V (y,{0^ + }) - P(\underline V (y,0)) = \varepsilon {e^{({\lambda _1} + \mu )\tau }}P'(0){\phi _2}(y,0) - P(\varepsilon {\phi _2}(y,0)) \le 0.$$
In addition, $\mu$ is a positive constant such that $\lambda_1 + \mu < 0$ for the given eigenvalue $ \lambda_1$. By problem (\ref{c01}), we have
$$\begin{array}{l}
\frac{{\partial \underline U }}{{\partial t}} - \frac{{{d_H}}}{{{\rho ^2}(t)}}\Delta \underline U  - \frac{{{\beta _H}b}}{{{N_H} + L}}(\frac{{{b_H}{N_H}}}{{{m_H}}} - \underline U )\underline V  + (\frac{{N\dot \rho (t)}}{{\rho (t)}} + {m_H} + {r_H})\underline U \\
 = \varepsilon {e^{ - ({\lambda _1} + \mu )(t - \tau )}}(\frac{{\partial {\phi _1}}}{{\partial t}} - ({\lambda _1} + \mu ){\phi _1}- \frac{{{d_H}}}{{{\rho ^2}(t)}}\Delta {\phi _1}\\
  \quad - \frac{{{\beta _H}b}}{{{N_H} + L}}(\frac{{{b_H}{N_H}}}{{{m_H}}} - \varepsilon {e^{ - ({\lambda _1} + \mu )(t - \tau )}}{\phi _1}){\phi _2} + (\frac{{N\dot \rho (t)}}{{\rho (t)}} + {m_H} + {r_H}){\phi _1})\\
 = \varepsilon {e^{ - ({\lambda _1} + \mu )(t - \tau )}}( - \mu {\phi _1} + \varepsilon {e^{ - ({\lambda _1} + \mu )(t - \tau )}}\frac{{{\beta _H}b}}{{{N_H} + L}}{\phi _1}{\phi _2})\\
 \le 0
\end{array}$$
and
$$\begin{array}{l}
\frac{{\partial \underline V }}{{\partial t}} - \frac{{{d_V}}}{{{\rho ^2}(t)}}\Delta \underline V  - \frac{{{\beta _V}b}}{{{N_H} + L}}(\frac{\Lambda }{{{m_V}}} - \underline V )\underline U + (\frac{{N\dot \rho (t)}}{{\rho (t)}} + {m_V})\underline V \\
 = \varepsilon {e^{ - ({\lambda _1} + \mu )(t - \tau )}}(\frac{{\partial {\phi _2}}}{{\partial t}} - ({\lambda _1} + \mu ){\phi _2} - \frac{{{d_V}}}{{{\rho ^2}(t)}}\Delta {\phi _2}\\
  \quad- \frac{{{\beta _V}b}}{{{N_H} + L}}(\frac{\Lambda }{{{m_V}}} - \varepsilon {e^{ - ({\lambda _1} + \mu )(t - \tau )}}{\phi _2}){\phi _1} + (\frac{{N\dot \rho (t)}}{{\rho (t)}} + {m_V}){\phi _2})\\
 = \varepsilon {e^{ - ({\lambda _1} + \mu )(t - \tau )}}( - \mu {\phi _2} + \varepsilon {e^{ - ({\lambda _1} + \mu )(t - \tau )}}\frac{{{\beta _V}b}}{{{N_H} + L}}{\phi _2}{\phi _1})\\
 \le 0
\end{array}$$
for $t\in (0^+,\tau]$. It is clear that $\underline U (y,{0^ + }) = \underline U (y,0)$, $(\underline U (y,0),\underline V (y,0)) = (\underline U (y,\tau ),\underline V (y,\tau ))$, and $(\underline{U}(y,t), \underline{V}(y,t))=(0,0)$ for $y \in \partial {\Omega _0}$. Therefore, by Definition \ref{defi4.2}, $(\underline U (y,t),\underline V (y,t))$ is a lower solution to problem (\ref{a09}).

Using the monotone iterative method, we can prove the existence of $\tau$-periodic solutions to problem (\ref{a09}). Let
$${G_1}(t,U,V) = {M_1}U+{g_1}(t,U,V),\;\;{G_2}(t,U,V) = {M_2}V+ {g_2}(t,U,V),$$
where
$${M_1} = \frac{{{\beta _H}b}}{{{N_H} + L}}\overline C + {m_H} + {r_H} + \mathop {\sup }\limits_{t \in [0,\tau ]} \frac{{N\dot \rho (t)}}{{\rho (t)}},\;\;{M_2} = \frac{{{\beta _V}b}}{{{N_H} + L}}\overline C + {m_V} + \mathop {\sup }\limits_{t \in [0,\tau ]} \frac{{N\dot \rho (t)}}{{\rho (t)}}$$
with $\overline{C}$ defined in (\ref{d05}) and
$${g_1}(t,U,V) = \frac{{{\beta _H}b}}{{{N_H} + L}}(\frac{{{b_H}{N_H}}}{{{m_H}}} - U)V - (\frac{{N\dot \rho (t)}}{{\rho (t)}} + {m_H} + {r_H})U,$$
$${g_2}(t,U,V) = \frac{{{\beta _V}b}}{{{N_H} + L}}(\frac{\Lambda }{{{m_V}}} - V)U - (\frac{{N\dot \rho (t)}}{{\rho (t)}} + {m_V})V.$$
Then, $G_1(t,U,V)$ and $G_2(t,U,V)$ are nondecreasing with respect to $U$ and $V$, respectively. We select $(\overline{U}^{(0)}, \overline{V}^{(0)}) = (\overline{U}, \overline{V})$ and $(\underline{U}^{(0)}, \underline{V}^{(0)}) = (\underline{U}, \underline{V})$ as initial values and consider the iterative process
\begin{eqnarray}
\left\{ {\begin{array}{*{20}{l}}
{\frac{{\partial {{\overline U }^{(m)}}}}{{\partial t}} - \frac{{{d_H}}}{{{\rho ^2}(t)}}\Delta {{\overline U }^{(m)}} + {M_1}{{\overline U }^{(m)}} = {G_1}(t,{{\overline U }^{(m - 1)}},{{\overline V }^{(m - 1)}}),}&{y \in {\Omega _0},t \in ({0^ + },\tau ],}\\
{\frac{{\partial {{\underline U }^{(m)}}}}{{\partial t}} - \frac{{{d_H}}}{{{\rho ^2}(t)}}\Delta {{\underline U }^{(m)}} + {M_1}{{\underline U }^{(m)}} = {G_1}(t,{{\underline U }^{(m - 1)}},{{\underline V }^{(m - 1)}}),}&{y \in {\Omega _0},t \in ({0^ + },\tau ],}\\
{\frac{{\partial {{\overline V }^{(m)}}}}{{\partial t}} - \frac{{{d_V}}}{{{\rho ^2}(t)}}\Delta {{\overline V }^{(m)}} + {M_2}{{\overline V }^{(m)}} = {G_2}(t,{{\overline U }^{(m - 1)}},{{\overline V }^{(m - 1)}}),}&{y \in {\Omega _0},t \in ({0^ + },\tau ],}\\
{\frac{{\partial {{\underline V }^{(m)}}}}{{\partial t}} - \frac{{{d_V}}}{{{\rho ^2}(t)}}\Delta {{\underline V }^{(m)}} + {M_2}{{\underline V }^{(m)}} = {G_2}(t,{{\underline U }^{(m - 1)}},{{\underline V }^{(m - 1)}}),}&{y \in {\Omega _0},t \in ({0^ + },\tau ],}\\
{{{\overline U }^{(m)}}(y,{0^ + }) = {{\overline U }^{(m - 1)}}(y,\tau ),\;{{\underline U }^{(m)}}(y,{0^ + }) = {{\underline U }^{(m - 1)}}(y,\tau ),}&{y \in {\Omega _0},}\\
{{{\overline V }^{(m)}}(y,{0^ + }) = P({{\overline V }^{(m - 1)}}(y,\tau )),\;{{\underline V }^{(m)}}(y,{0^ + }) = P({{\underline V }^{(m - 1)}}(y,\tau )),}&{y \in {\Omega _0},}\\
{{{\overline U }^{(m)}}(y,t) = {{\underline U }^{(m)}}(y,t) = {{\overline V }^{(m)}}(y,t) = {{\underline V }^{(m)}}(y,t) = 0,}&{y \in \partial {\Omega _0},t \in [0,\tau ]}
\end{array}} \right.
\label{d06}
\end{eqnarray}
with periodic conditions
\begin{eqnarray}
\left\{ {\begin{array}{*{20}{l}}
{{{\overline U }^{(m)}}(y,0) = {{\overline U }^{(m - 1)}}(y,\tau ),\;{{\underline U }^{(m)}}(y,0) = {{\underline U }^{(m - 1)}}(y,\tau ),}&{y \in {{\overline \Omega  }_0},}\\
{{{\overline V }^{(m)}}(y,0) = {{\overline V }^{(m - 1)}}(y,\tau ),\;{{\underline V }^{(m)}}(y,0) = {{\underline V }^{(m - 1)}}(y,\tau ),}&{y \in {{\overline \Omega  }_0},m = 1,2, \cdots .}
\end{array}} \right.
\label{d07}
\end{eqnarray}
Accordingly, two sequences $\{ ({{\overline U}^{(m)}},{{\overline V}^{(m)}})\} $ and $\{ ({\underline U ^{(m)}},{\underline V ^{(m)}})\} $ are derived from (\ref{d06}) and (\ref{d07}), which, by Lemma 3.1 in \cite{pcv2005}, satisfy
$$({\underline U ^{(0)}},{\underline V ^{(0)}}) \le ({\underline U ^{(m - 1)}},{\underline V ^{(m - 1)}}) \le ({\underline U ^{(m)}},{\underline V ^{(m)}}) \le ({\overline U ^{(m)}},{\overline V ^{(m)}}) \le ({\overline U ^{(m - 1)}},{\overline V ^{(m - 1)}}) \le ({\overline U ^{(0)}},{\overline V ^{(0)}})$$
for any $(y,t) \in {\overline \Omega _0} \times [0,\infty) $. The monotone bounded convergence theorem shows that the existence of limits for $\{ ({{\overline U}^{(m)}},{{\overline V}^{(m)}})\} $ and $\{ ({\underline U ^{(m)}},{\underline V ^{(m)}})\} $, that is,
\begin{eqnarray}
\mathop {\lim }\limits_{t \to \infty } ({{\overline U}^{(m)}},{{\overline V}^{(m)}}) = ({U^*},{V^*}),\;\;\mathop {\lim }\limits_{t \to \infty } ({\underline U ^{(m)}},{\underline V ^{(m)}}) = ({U_*},{V_*}),
\label{d07'}
\end{eqnarray}
where $(U^*, V^*) $ and $(U_*, V_*)$ are $\tau$-periodic solutions to problem (\ref{a09}).  We claim that $(U^*, V^*) $ is the maximal positive $\tau$-periodic solution to problem (\ref{a09}), and $(U_*, V_*)$ is the minimal one. In fact, it is sufficient to choose $({{\overline U}^{(0)}},{{\overline V}^{(0)}}) = \left( {\frac{{{b_H}{N_H}}}{{{m_H}}},\frac{\Lambda }{{{m_V}}}} \right)$ and $({\underline U ^{(0)}},{\underline V ^{(0)}}) = (U,V)$ as the initial iteration values, where $(U, V)$ is any solution to problem (\ref{a09}) satisfying $(\underline U ,\underline V ) \le (U,V) \le (\overline U,\overline V)$, to derive $(U, V) \le (U^*, V^*)$ for any $(y,t) \in {\overline \Omega _0} \times [0,\infty) $. In a similar manner, $(U_*, V_*)$ is the minimal positive $\tau$-periodic solution. For more details, see the proof of Theorem 4.2 in \cite{zq2025}.

\noindent\textbf{Step 2.} Define
$$\Theta  = \{ \gamma  \in [0,1]\;|\;\gamma ({U_1},{V_1}) \le ({U_2},{V_2}),\;(y,t) \in {{\overline \Omega }_0} \times [0,\tau ]\} ,$$
where $({U_1},{V_1})$ and $({U_2},{V_2})$ are two distinct solutions of problem (\ref{a09}). If $1 \in \Theta$, then we derive the conclusion that $({U_1},{V_1}) \le ({U_2},{V_2})$ for $(y,t) \in {{\overline \Omega }_0} \times [0,\tau ]$. Similarly, we have $({U_2},{V_2}) \le ({U_1},{V_1})$ for $(y,t) \in {{\overline \Omega }_0} \times [0,\tau ]$. Thus, the solution to problem (\ref{a09}) is unique, that is, $({U^*},{V^*})=({U_*},{V_*})\triangleq(U, V)$.

Now, we prove $1 \in \Theta$ by contradiction. We assume that ${\gamma ^*} = \sup \Theta  < 1$. Calculations yield that
\begin{eqnarray}
\begin{array}{l}
\frac{{\partial ({V_2} - {\gamma ^*}{V_1})}}{{\partial t}} - \frac{{{d_V}}}{{{\rho ^2}(t)}}\Delta ({V_2} - {\gamma ^*}{V_1}) + {M_2}({V_2} - {\gamma ^*}{V_1})\\
 = {g_2}(t,U,{V_2}) + {M_2}{V_2} - {\gamma ^*}{g_2}(t,U,{V_1}) - {\gamma ^*}{M_2}{V_1}\\
 \ge {g_2}(t,U,{\gamma ^*}{V_1}) - {\gamma ^*}{g_2}(t,U,{V_1})\\
 > 0
\end{array}
\label{d08}
\end{eqnarray}
because $g_2(t,U,V) + {M_2}V$ is nondecreasing and ${g_2}(t,U,V)/V$ is strictly decreasing with respect to $V$, respectively. In light of Assumptions $\mathbf{(A1)}$ and $\mathbf{(A2)}$,
$${V_2}(y,{0^ + }) - {\gamma ^*}{V_1}(y,{0^ + }) = P({V_2}(y,0)) - {\gamma ^*}P({V_1}(y,0)) \ge P({\gamma ^*}{V_1}(y,0)) - {\gamma ^*}P({V_1}(y,0)) \ge 0$$
for $y\in\Omega_0$. Furthermore, the boundary condition $V_2(y,t) - \gamma^* V_1(y,t) = 0$ holds for $(y,t) \in \partial\Omega_0 \times [0,\tau]$. By the strong maximum principle\cite{Protter1984}, exactly one of the two relations $V_2 - \gamma^* V_1 > 0$ and $V_2 - \gamma^* V_1 \equiv 0$ must hold for $(y,t) \in {\Omega _0} \times [{0^ + } \cup ({0^ + },\tau ]]$. Analogously, $U_2 - \gamma^* U_1 > 0$ or $U_2 - \gamma^* U_1 \equiv 0$ for $(y,t) \in {\Omega _0} \times [{0^ + } \cup ({0^ + },\tau ]]$.

Suppose that $V_2 - \gamma^* V_1 > 0$ holds for $(y,t) \in \Omega_0 \times \left( \{0^+\} \cup (0^+, \tau] \right)$. In view of the periodicity conditions $V_1(y,0) = V_1(y,\tau)$ and $V_2(y,0) = V_2(y,\tau)$, we conclude that $V_2 - \gamma^* V_1 > 0 $ holds for $(y,t) \in \Omega_0 \times [0,\tau]$. The Hopf boundary lemma states that the outward normal derivative $\frac{\partial (V_2 - {\gamma ^*}{V_1})} {\partial \eta } < 0$ holds for $(y,t) \in \partial\Omega_0 \times [0,\tau]$. Consequently, there exists a constant  $\varepsilon_0>0$ such that $V_2 - \gamma^* V_1 > \varepsilon_0 V_1$, which contradicts $\gamma^* = \sup\Theta$.

Assume that $V_2 - \gamma^* V_1 \equiv 0$ holds for $(y,t) \in \Omega_0 \times \left( \{0^+\} \cup (0^+, \tau] \right)$. From the periodicity of $u$ and $v$, it follows that $V_2 - \gamma^* V_1 \equiv 0$ for $ (y,t) \in \Omega_0 \times [0,\tau]$. By substituting this result into the second equation of problem (\ref{a09}), we obtain $g_2(t,U,V_2) = g_2(t,U,\gamma^* V_1) = \gamma^* g_2(t,U,V_1)$, which contradicts (\ref{d08}).

As a result, the $\tau$-periodic solution $(U(y,t), V(y,t))$ to problem (\ref{a09}) exists and is unique. The proof of Theorem \ref{thm1.2} is finished. \epf\vspace{0.5\baselineskip}

Based on the above results, we present the convergence of the solution to the problem (\ref{a08}).

\vspace{0.5\baselineskip}\noindent\textbf{Proof of Theorem \ref{thm1.3}:} Without loss of generality, let $(u_0(y), v_0(y)) > (0,0)$ for $y \in \overline\Omega_0$. If not, we replace the initial time $0$ by some $t^*>0$. Via the Hopf boundary lemma, there exists a sufficiently small constant $\varepsilon$ for which $(u_0(y), v_0(y)) \ge (\varepsilon \phi_1(y,0), \varepsilon \phi_2(y,0)) $, where $(\phi_1,\phi_2)$ is defined in (\ref{c01}). Provided that $\overline{C}$ is a sufficiently large constant, the relation $ (\overline{C}, \overline{C}) \ge (u_0(y), v_0(y))$ is valid. From the proof of Theorem \ref{thm1.2}, it follows that
$$\begin{array}{l}
({\underline U ^{(0)}}(y,0),{\underline V ^{(0)}}(y,0)) = (\underline U (y,0),\underline V (y,0)) \le ({u_0}(y),{v_0}(y))\\
\quad\quad\quad\quad\quad\quad\quad\quad\quad\quad\quad\quad\quad\quad\quad\quad\;\,  \le (\overline U (y,0),\overline V (y,0)) = ({\overline U ^{(0)}}(y,0),{\overline V ^{(0)}}(y,0))
\end{array}$$
for $y \in \overline\Omega_0$. Given that $P(v)$ is monotone nondecreasing (Assumption $\mathbf{(A1)}$), $P(\underline{V}^{(0)}(y,0)) \le P(v_0(y)) \le P(\overline{V}^{(0)}(y,0))$, whence $$({\underline U ^{(0)}}(y,{0^ + }),{\underline V ^{(0)}}(y,{0^ + })) \le (u(y,{0^ + }),v(y,{0^ + })) \le ({\overline U ^{(0)}}(y,{0^ + }),{\overline V ^{(0)}}(y,{0^ + }))$$
for $y \in \overline\Omega_0$. In accordance with the comparison principle (Lemma \ref{lem4.1}), we obtain
$$(\underline U^{(0)} (y,t),\underline V^{(0)} (y,t)) \le (u(y,t),v(y,t)) \le (\overline U^{(0)}(y,t),\overline V^{(0)}(y,t))$$
for $(y,t) \in {{\overline \Omega }_0} \times [0,\tau ]$. Mathematical induction can extend the range of $t$ from $[0,\tau]$ to $[0,\infty)$.

Relation (\ref{d07}) shows that $$(\underline{U}^{(0)}(y,\tau), \underline{V}^{(0)}(y,\tau)) = (\underline{U}^{(1)}(y,0), \underline{V}^{(1)}(y,0)),\;\;(\overline{U}^{(0)}(y,\tau), \overline{V}^{(0)}(y,\tau)) = (\overline{U}^{(1)}(y,0), \overline{V}^{(1)}(y,0)).$$ For $t = \tau$, we have
$$(\underline{U}^{(1)}(y,0), \underline{V}^{(1)}(y,0)) \le (u(y,\tau), v(y,\tau)) \le (\overline{U}^{(1)}(y,0), \overline{V}^{(1)}(y,0))$$
for $y \in \overline\Omega_0$. Upon repeating the above steps, one arrives at
$$({\underline U ^{(1)}}(y,t),{\underline V ^{(1)}}(y,t)) \le (u(y,t + \tau ),v(y,t + \tau )) \le ({{\overline U}^{(1)}}(y,t),{{\overline V}^{(1)}}(y,t))$$
for $(y,t) \in {{\overline \Omega }_0} \times [0,\infty)$. Step by step, we have
$$({\underline U ^{(n)}}(y,t),{\underline V ^{(n)}}(y,t)) \le (u(y,t + n\tau ),v(y,t + n\tau )) \le ({{\overline U}^{(n)}}(y,t),{{\overline V}^{(n)}}(y,t)),\;\; n=0,1,2,\cdots$$
for $(y,t) \in {{\overline \Omega }_0} \times [0,\infty)$. From Theorem \ref{thm1.2} and (\ref{d07'}), we see that
$$\lim_{n \to \infty} (\underline{U}^{(n)}(y,t), \underline{V}^{(n)}(y,t)) = \lim_{n \to \infty} (\overline{U}^{(n)}(y,t), \overline{V}^{(n)}(y,t)) = (U(y,t),V(y,t))$$
for $t \in [0,\infty )$. Hence, relation (\ref{a10}) holds, and Theorem \ref{thm1.3} is proven.\epf\vspace{0.5\baselineskip}

In conjunction with Definition \ref{defi4.1} and Lemma \ref{lem4.1}, we proceed to prove Theorem \ref{thm1.1}.\vspace{0.5\baselineskip}

\noindent\textbf{Proof of Theorem \ref{thm1.1}:} We first consider the case where $\lambda_1 > 0$. If we can find an upper solution $(\overline u (y,t), \overline v (y,t))$ to problem (\ref{a08}) such that
\begin{eqnarray}
\mathop {\lim }\limits_{t \to \infty } (\overline u (y,t),\overline v (y,t)) = (0,0),\quad y \in {{\overline \Omega }_0}.
\label{d04}
\end{eqnarray}
Then by Lemma \ref{lem4.1},
$$(u(y,t), v(y,t)) \le (\overline u (y,t), \overline v (y,t)),\quad y \in {{\overline \Omega }_0},t \in [0,\infty )$$
holds for any solution $(u(y,t), v(y,t))$ to problem (\ref{a08}), thereby proving Theorem \ref{thm1.1} for $\lambda_1 > 0$. Now we construct such an upper solution. Let
$$\overline u(y,t) = M e^{-\lambda_1 t} \phi_1(y,t),\;\;\overline v(y,t) = M e^{-\lambda_1 t} \phi_2(y,t),$$
where $M$ is a sufficiently large constant such that $M \phi_1(y,0) \ge u_0(y)$ and $M \phi_2(y,0) \ge v_0(y)$ for any given initial value $( u_0(y), v_0(y))$, as well as $\phi_1(y,t)$ and $\phi_2(y,t)$ are positive eigenfunctions of problem (\ref{c01}). Direct computation gives
$$\begin{array}{l}
\frac{{\partial \overline u}}{{\partial t}} - \frac{{{d_H}}}{{{\rho ^2}(t)}}\Delta \overline u - \frac{{{\beta _H}b}}{{{N_H} + L}}(\frac{{{b_H}{N_H}}}{{{m_H}}} - \overline u)\overline v + (\frac{{N\dot \rho (t)}}{{\rho (t)}} + {m_H} + {r_H})\overline u\\
 \ge \frac{{\partial \overline u}}{{\partial t}} - \frac{{{d_H}}}{{{\rho ^2}(t)}}\Delta \overline u - \frac{{{\beta _H}b}}{{{N_H} + L}}\frac{{{b_H}{N_H}}}{{{m_H}}}\overline v + (\frac{{N\dot \rho (t)}}{{\rho (t)}} + {m_H} + {r_H})\overline u\\
 = M{e^{ - {\lambda _1}t}}( - {\lambda _1}{\phi _1} + \frac{{\partial {\phi _1}}}{{\partial t}} - \frac{{{d_H}}}{{{\rho ^2}(t)}}\Delta {\phi _1} - \frac{{{\beta _H}b}}{{{N_H} + L}}\frac{{{b_H}{N_H}}}{{{m_H}}}{\phi _2} + (\frac{{N\dot \rho (t)}}{{\rho (t)}} + {m_H} + {r_H}){\phi _1})\\
 = 0
\end{array}$$
and
$$\begin{array}{l}
\frac{{\partial \overline v}}{{\partial t}} - \frac{{{d_V}}}{{{\rho ^2}(t)}}\Delta \overline v - \frac{{{\beta _V}b}}{{{N_H} + L}}(\frac{\Lambda }{{{m_V}}} - \overline v)\overline u + (\frac{{N\dot \rho (t)}}{{\rho (t)}} + {m_V})\overline v\\
 \ge \frac{{\partial \overline v}}{{\partial t}} - \frac{{{d_V}}}{{{\rho ^2}(t)}}\Delta \overline v - \frac{{{\beta _V}b}}{{{N_H} + L}}\frac{\Lambda }{{{m_V}}}\overline u + (\frac{{N\dot \rho (t)}}{{\rho (t)}} + {m_V})\overline v\\
 = M{e^{ - {\lambda _1}t}}( - {\lambda _1}{\phi _2} + \frac{{\partial {\phi _2}}}{{\partial t}} - \frac{{{d_V}}}{{{\rho ^2}(t)}}\Delta {\phi _2} - \frac{{{\beta _V}b}}{{{N_H} + L}}\frac{\Lambda }{{{m_V}}}{\phi _1} + (\frac{{N\dot \rho (t)}}{{\rho (t)}} + {m_V}){\phi _2})\\
 = 0.
\end{array}$$
In addition, based on Assumptions $\mathbf{(A1)}$ and $\mathbf{(A2)}$, we obtain
$$\overline u(y,{(n\tau )^ + }) - \overline u(y,n\tau ) = M{e^{ - {\lambda _1}n\tau }}({\phi _1}(y,{(n\tau )^ + }) - {\phi _1}(y,n\tau )) = 0$$
and
$$\overline v(y,{(n\tau )^ + }) - P(\overline v(y,n\tau )) = M{e^{ - {\lambda _1}n\tau }}{\phi _2}(y,{(n\tau )^ + } - P(\overline v(y,n\tau )) = P'(0)\overline v(y,n\tau ) - P(\overline v(y,n\tau )) \ge 0.$$
Therefore, in accordance with Definition \ref{defi4.1}, $(\overline u(y,t), \overline v(y,t))$ is an upper solution corresponding to problem (\ref{a08}), and (\ref{d04}) is satisfied.

For $\lambda_1 = 0$, we consider the following auxiliary problem:
\begin{eqnarray}
\left\{ {\begin{array}{*{20}{l}}
{\frac{{\partial U}}{{\partial t}} = \frac{{{d_H}}}{{{\rho ^2}(t)}}\Delta U + \frac{{{\beta _H}b}}{{{N_H} + L}}(\frac{{{b_H}{N_H}}}{{{m_H}}} - U)V - (\frac{{N\dot \rho (t)}}{{\rho (t)}} + {m_H} + {r_H})U + \lambda U,}&{y \in {\Omega _0},t \in ({0^ + },\tau ],}\\
{\frac{{\partial V}}{{\partial t}} = \frac{{{d_V}}}{{{\rho ^2}(t)}}\Delta V + \frac{{{\beta _V}b}}{{{N_H} + L}}(\frac{\Lambda }{{{m_V}}} - V)U - (\frac{{N\dot \rho (t)}}{{\rho (t)}} + {m_V})V + \lambda V,}&{y \in {\Omega _0},t \in ({0^ + },\tau ],}\\
{U(y,{0^ + }) = U(y,0),\;V(y,{0^ + }) = P(V(y,0)),}&{y \in {\Omega _0},}\\
{U(y,t) = V(y,t) = 0,}&{y \in \partial {\Omega _0},t \in (0,\tau ],}\\
{U(y,0) = U(y,\tau ),\;V(y,0) = V(y,\tau ),}&{y \in {{\bar \Omega }_0},}
\end{array}} \right.
\label{d09}
\end{eqnarray}
where $\lambda$ is an arbitrary constant. It is easily seen that when $\lambda\le\lambda_1$, problem (\ref{d09}) only admits the zero solution. When $\lambda > \lambda_1$, it follows from Theorem \ref{thm1.2} that problem (\ref{d09}) has a unique positive $\tau$-periodic solution, denoted $(U_{\lambda}(y,t),V_{\lambda}(y,t))$. Clearly, $(U_{\lambda}(y,t),V_{\lambda}(y,t))$ is monotone increasing with respect to
$\lambda$ and satisfies
\begin{eqnarray}
\mathop {\lim }\limits_{\lambda  \to {\lambda _1}} {\left\| {({U_\lambda },{V_\lambda })} \right\|_{C({{\overline \Omega  }_0} \times [0,\tau ])}} = (0,0).
\label{d10}
\end{eqnarray}

Let $(u_{\lambda}(y,t),v_{\lambda}(y,t))$ be the solution of the following problem
\begin{eqnarray*}
\left\{ {\begin{array}{*{20}{l}}
{\frac{{\partial u}}{{\partial t}} = \frac{{{d_H}}}{{{\rho ^2}(t)}}\Delta u + \frac{{{\beta _H}b}}{{{N_H} + L}}(\frac{{{b_H}{N_H}}}{{{m_H}}} - u)v - (\frac{{N\dot \rho (t)}}{{\rho (t)}} + {m_H} + {r_H})u + \lambda,}&{y \in {\Omega _0},t \in ({{(n\tau )}^ + },(n + 1)\tau ],}\\
{\frac{{\partial v}}{{\partial t}} = \frac{{{d_V}}}{{{\rho ^2}(t)}}\Delta v + \frac{{{\beta _V}b}}{{{N_H} + L}}(\frac{\Lambda }{{{m_V}}} - v)u - (\frac{{N\dot \rho (t)}}{{\rho (t)}} + {m_V})v + \lambda,}&{y \in {\Omega _0},t \in ({{(n\tau )}^ + },(n + 1)\tau ],}\\
{u(y,{{(n\tau )}^ + }) = u(y,n\tau ),\;v(y,{{(n\tau )}^ + }) = P(v(y,n\tau )),}&{y \in {\Omega _0},}\\
{u(y,t) = v(y,t) = 0,}&{y \in \partial {\Omega _0},t > 0,}\\
{u(y,0) = {I_{H,0}}(x(0)) \le \frac{{{b_H}{N_H}}}{{{m_H}}},\;v(y,0) = {I_{V,0}}(x(0)) \le \frac{\Lambda }{{{m_V}}},}&{y \in {{\overline \Omega }_0},n = 0,1, \cdots .}
\end{array}} \right.
\end{eqnarray*}
We thus find that $(u_{\lambda}(y,t),v_{\lambda}(y,t))$ fulfills
$$(u(y,t),v(y,t)) \le ({u_\lambda }(y,t),{v_\lambda }(y,t)),\quad y \in {{\bar \Omega }_0},t \in [0,\infty )$$
for $\lambda > \lambda_1$. By Theorem \ref{thm1.3} we further obtain
$$\mathop {\lim }\limits_{n \to \infty } \sup (u(y,n\tau  + t),v(y,n\tau  + t)) \le \mathop {\lim }\limits_{n \to \infty } ({u_\lambda }(y,n\tau  + t),{v_\lambda }(y,n\tau  + t)) = ({U_\lambda }(y,t),{V_\lambda }(y,t))$$
for $y \in {{\bar \Omega }_0},t \in [0,\infty )$. Taking $\lambda  \to {\lambda _1} \ge 0$ and combining with (\ref{d10}) yields
$$\mathop {\lim }\limits_{t \to \infty } \sup {\left\| {(u( \cdot ,t),v( \cdot ,t))} \right\|_{C({{\overline \Omega  }_0})}} = (0,0),$$
that is,
$$\mathop {\lim }\limits_{t \to \infty } {\left\| {(u( \cdot ,t),v( \cdot ,t))} \right\|_{C({{\overline \Omega  }_0})}} = (0,0).$$
This completes the proof of Theorem \ref{thm1.1}.\epf\vspace{0.5\baselineskip}

\section{Eigenvalue estimate, numerical simulation and discussion}

The existence and related properties of the principal eigenvalue ${\lambda _1}$ were established in Section 3. This section investigates how the initial region ${\Omega _0}$ and the impulsive intensity $P'(0)$ influence infectious disease dynamics.

\subsection{Eigenvalue estimate}
For the subsequent numerical simulations, an explicit expression for the threshold ${\lambda _1}$ is required. In problem (\ref{c01}), however, the existence of $P'(0)$ makes the calculation of the principal eigenvalue ${\lambda _1}$ complicated. Nevertheless, numerical simulation remains feasible by leveraging specific estimates and the known properties of ${\lambda _1}$.

In the following, we derive estimates and explicit expressions for the principal eigenvalue ${\lambda _1}$ in the special cases. For convenience, we define $\overline {{\rho ^{ - 2}}}  =
\frac{1}{ \tau }\int_0^\tau  {{\rho ^{ - 2}}(t)} dt$.
\begin{case}\label{case5.1}
Suppose $P'(0)=1$, then ${\lambda _1}$ satisfies
\begin{eqnarray*}
\begin{array}{l}
\displaystyle{\lambda _1} \le \left( {({d_H} + {d_V}){\lambda ^*}\overline {{\rho ^{ - 2}}}  + {m_H} + {r_H} + {m_V}} \right)/2\\
\displaystyle \quad\quad - {\left[ {{{\left( {({d_H} - {d_V}){\lambda ^*}\overline {{\rho ^{ - 2}}}  + {m_H} + {r_H} - {m_V}} \right)}^2} + 4\frac{{{\beta _H}{\beta _V}{b^2}}}{{{{({N_H} + L)}^2}}}\frac{{{b_H}{N_H}}}{{{m_H}}}\frac{\Lambda }{{{m_V}}}} \right]^{1/2}}/2.
\end{array}
\label{e01}
\end{eqnarray*}
\end{case}
\begin{case}\label{case5.2}
(i)\;Suppose $P'(0)=1$, ${{d_H} = {d_V}}$, and $\frac{{{\beta _H}b}}{{{N_H} + L}}\frac{{{b_H}{N_H}}}{{{m_H}}} = \frac{{{\beta _V}b}}{{{N_H} + L}}\frac{\Lambda }{{{m_V}}}:= c_0$, then ${\lambda _1}$ satisfies
\begin{eqnarray*}
{\lambda _1} \ge {d_H}{\lambda ^*}\overline {{\rho ^{ - 2}}}  - {c_0}.
\label{e06}
\end{eqnarray*}
(ii)\;If ${d_H} \ne {d_V}$ and $\frac{{{\beta _H}b}}{{{N_H} + L}}\frac{{{b_H}{N_H}}}{{{m_H}}} \ne \frac{{{\beta _V}b}}{{{N_H} + L}}\frac{\Lambda }{{{m_V}}}$, then
$${\lambda _1} \ge \min \{ {d_H},{d_V}\} {\lambda ^*}\overline {{\rho ^{ - 2}}}  - \max \{ \frac{{{\beta _H}b}}{{{N_H} + L}}\frac{{{b_H}{N_H}}}{{{m_H}}},\frac{{{\beta _V}b}}{{{N_H} + L}}\frac{\Lambda }{{{m_V}}}\} .$$
\end{case}
\begin{case}\label{case5.3}
Suppose $P'(0)=1$, ${{d_H} = {d_V}}$, and $\rho (t) \equiv 1$, then ${\lambda _1}$ satisfies
\begin{eqnarray*}
{\lambda _1} = {d_H}{\lambda ^*} + \left[ {{m_H} + {r_H} + {m_V} - {{\left( {{{\left( {{m_H} + {r_H} - {m_V}} \right)}^2} + 4\frac{{{\beta _H}{\beta _V}{b^2}}}{{{{({N_H} + L)}^2}}}\frac{{{b_H}{N_H}}}{{{m_H}}}\frac{\Lambda }{{{m_V}}}} \right)}^{1/2}}} \right]/2.
\label{e08}
\end{eqnarray*}
(ii)\;If ${d_H} \ne {d_V}$, then
$$\begin{array}{l}
\displaystyle{\lambda _1} = \left( {({d_H} + {d_V}){\lambda ^*} + {m_H} + {r_H} + {m_V}} \right)/2\\
\displaystyle \quad\quad - {\left[ {{{\left( {({d_H} - {d_V}){\lambda ^*} + {m_H} + {r_H} - {m_V}} \right)}^2} + 4\frac{{{\beta _H}{\beta _V}{b^2}}}{{{{({N_H} + L)}^2}}}\frac{{{b_H}{N_H}}}{{{m_H}}}\frac{\Lambda }{{{m_V}}}} \right]^{1/2}}/2.
\end{array}$$
\end{case}

To establish the statements above, it suffices to adapt Theorem 5.3 in \cite{zq2025} with only a few small changes.

\subsection{Numerical simulation}

The purpose of the numerical simulations in this study is twofold: to verify our theoretical results and to obtain a more direct understanding of the effects of different parameters on the transmission of dengue fever.

We now fix the following epidemiological parameters in (\ref{a08}):
$$N_H=35,\;\;\Lambda=42,\;\;m_H=0.1,\;\;b_H=0.15,\;\;r_H=0.3,$$\vspace{-0.8cm}
$$L=1,\;\;\beta_H=8,\;\;\beta_V=4,\;\;N=1,\;\;\tau =2.$$
Next, the initial region is chosen as ${\Omega _0} = [0,\pi ]$ with ${\lambda ^*} = 1$, and the initial functions are set to
$${u_0}(y) = 0.5\sin y + 0.2\sin 3y,\;\;{v_0}(y) = 2\sin y + \sin 3y.$$

Our simulations unfold along two paths. One is designed to explore the influence of periodic impulsive control on disease transmission, and the other to assess the effect of the scaling factor.

\begin{exm}\label{exm5.1} Fix $d_H=0.5,d_V=0.2,b=0.07$ and $m_V=0.55$. The impulsive function and the scaling factor are selected in three combinations: $P(v) = v,\rho (t) \equiv 1$; $P(v) =\frac{{7v}}{10+v},\rho (t) \equiv 1$; and $P(v) = v,\rho (t)={0.75e^{2(1 - \cos \pi t)}}$.
\end{exm}

\begin{figure}[htbp]
\centering
\subfigure[]{ {
\includegraphics[width=0.4\textwidth]{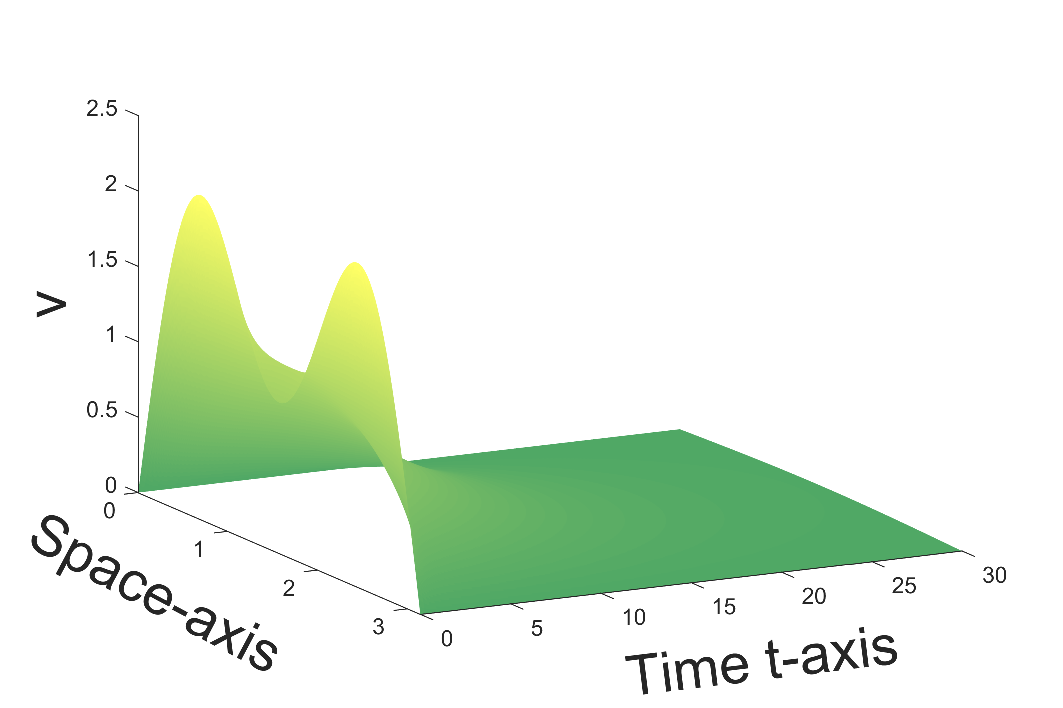}
} }
\subfigure[]{ {
\includegraphics[width=0.4\textwidth]{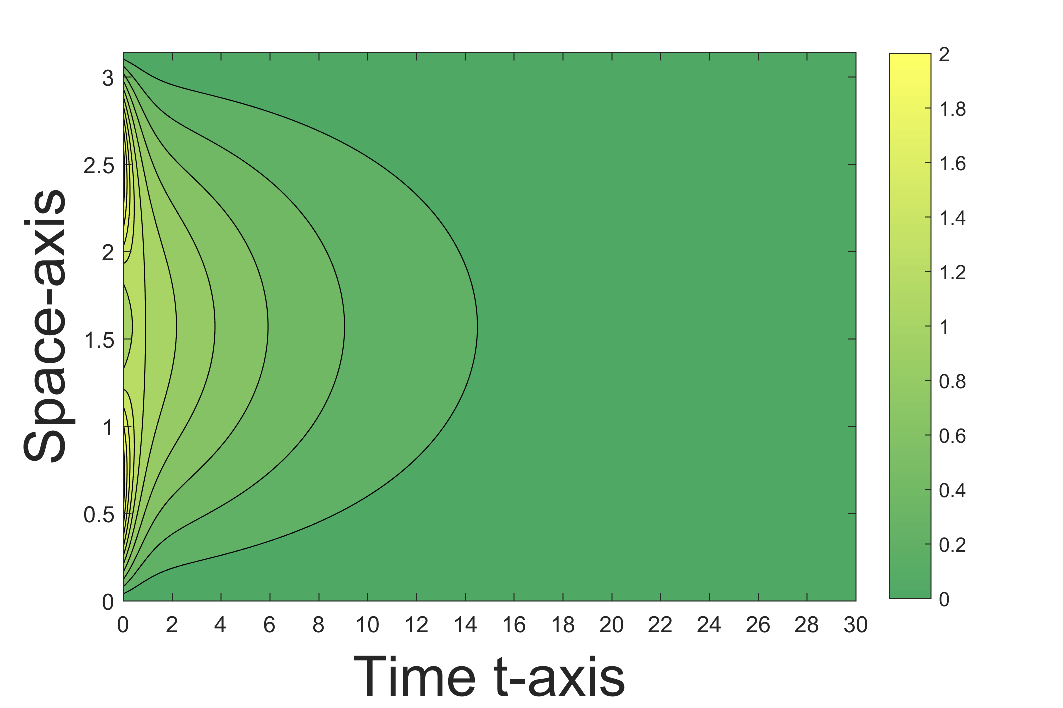}
} }
\subfigure[]{ {
\includegraphics[width=0.4\textwidth]{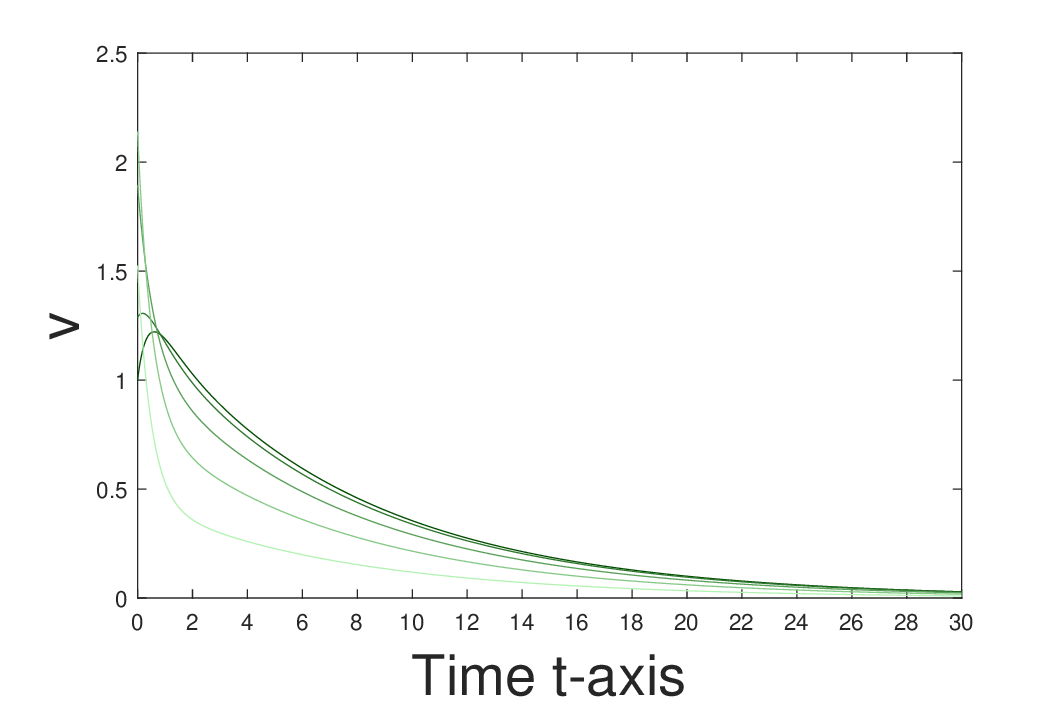}
} }
\renewcommand{\figurename}{\scriptsize Fig.}
\caption{\scriptsize $P(v) = v$ and $\rho (t) \equiv 1$. From graphs (a)-(f), it can be seen that the variables $u$ and $v$ ultimately approach zero. Graphs (a) and (d), (b) and (e), as well as (c) and (f) are a 3D surface plot, a contour plot, and a spatial projection plot, respectively.}
\end{figure}

In the case where $P(v) = v$ and $\rho (t) \equiv 1$ (so that there is no impulsive control and no domain evolution), the calculation in Case \ref{case5.3}(ii) yields ${\lambda _1} \approx 0.125 > 0$. Theorem \ref{thm1.1} states that when $\lambda_1 > 0$, the solution of problem (\ref{a08}) converges to the trivial solution, which matches the results shown in Fig.\,2(a-f). Moreover, Fig.\,2(c,f) illustrates that a lower mosquito biting rate and a higher mosquito mortality rate effectively control the spread of the dengue fever.

\begin{figure}[htbp]
\centering
\subfigure[]{ {
\includegraphics[width=0.4\textwidth]{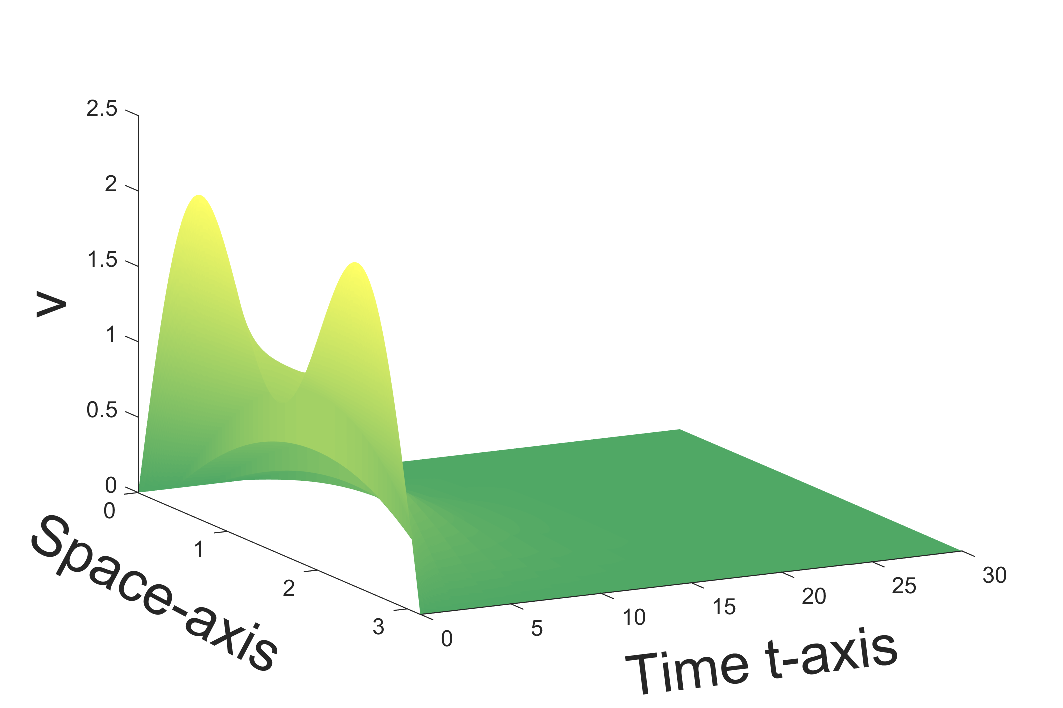}
} }
\subfigure[]{ {
\includegraphics[width=0.4\textwidth]{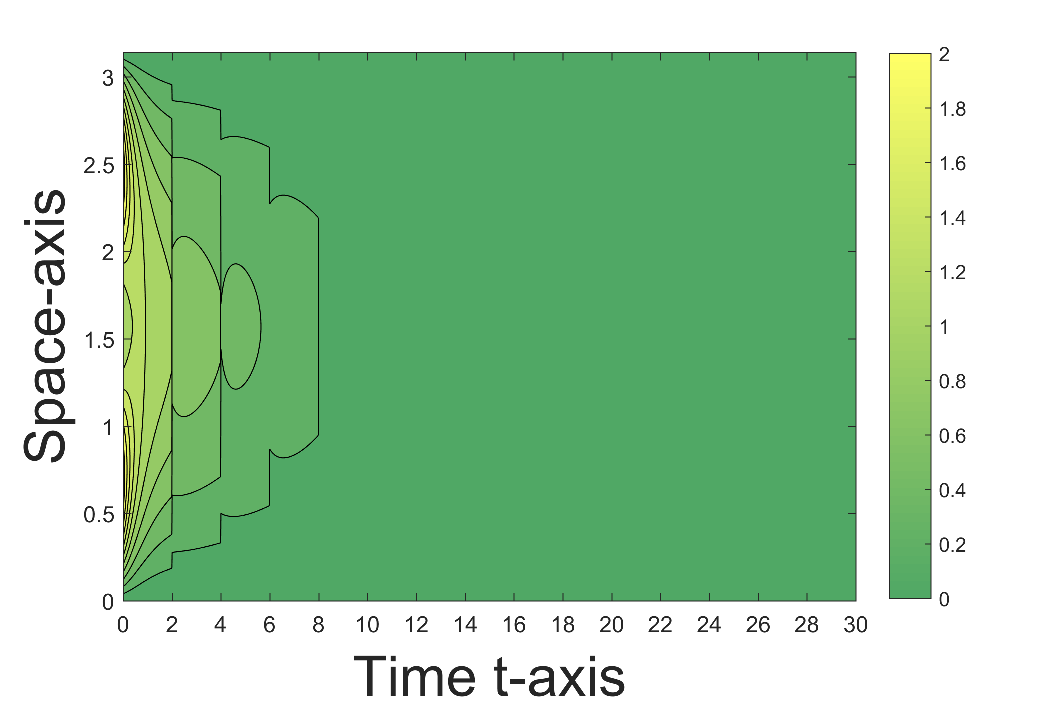}
} }
\subfigure[]{ {
\includegraphics[width=0.4\textwidth]{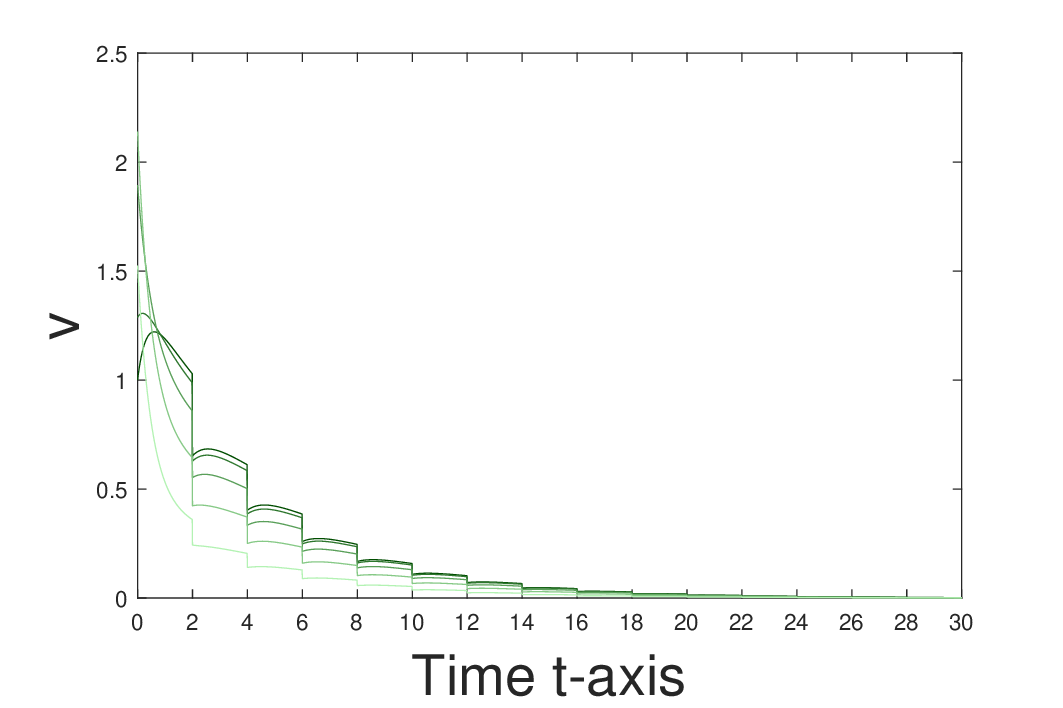}
} }
\renewcommand{\figurename}{\scriptsize Fig.}
\caption{\scriptsize $P(v) = \frac{{7v}}{{v + 10}}$ and $\rho (t) \equiv 1$. Graphs (a)-(f) indicate that the variable $v$ ( and $u$ omitted by simplification) eventually converge to zero. As shown in graph (f), the impulsive control is implemented at every time $\tau = 2,4, \cdots$.}
\end{figure}

For the case $P(v) = \frac{{7v}}{{v + 10}}$ and $\rho (t) \equiv 1$ (so that impulsive control is active but the domain remains fixed), calculation using Theorem \ref{thm3.4} and Case \ref{case5.3}(ii) gives ${\lambda _1}(0.7) > {\lambda _1}(1) \approx 0.125 > 0$. By Theorem \ref{thm1.1}, the solution of problem (1.6) also converges to zero, which is seen in Fig.\,3(a-f). A comparison of graphs (c) and (f) in Figs.\,2 and 3 shows that both variables $u$ and $v$ tend to zero earlier than before. This implies a faster decline in dengue fever incidence. Consequently, periodic impulsive control favors the elimination of this epidemic.
\begin{figure}[H]
\centering
\subfigure[]{ {
\includegraphics[width=0.4\textwidth]{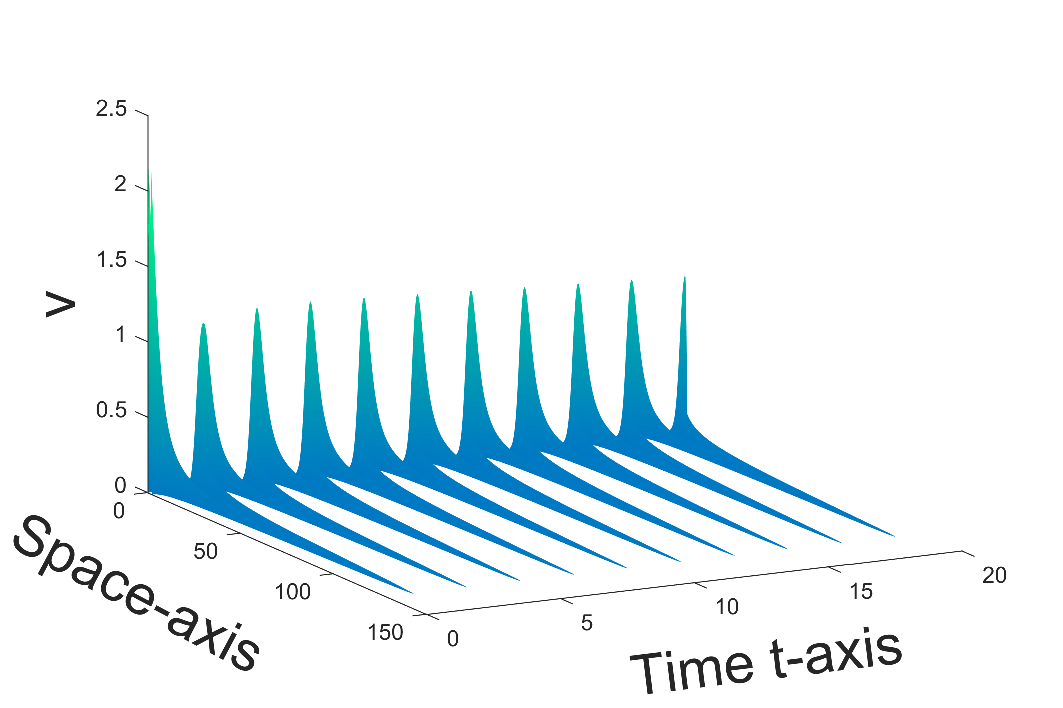}
} }
\subfigure[]{ {
\includegraphics[width=0.4\textwidth]{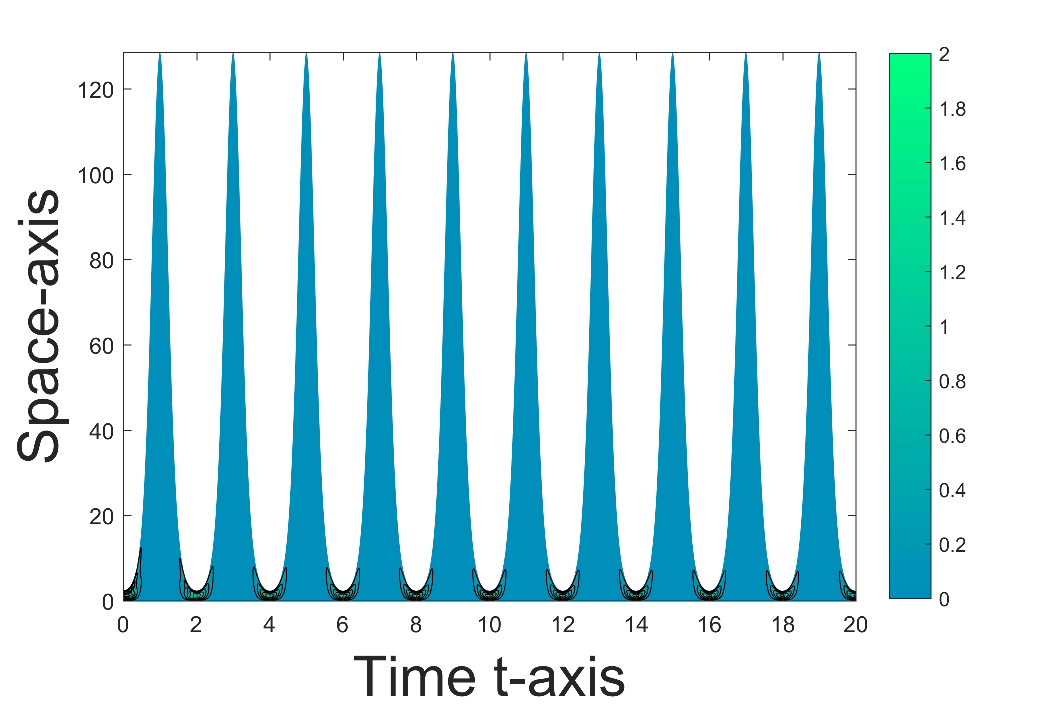}
} }
\subfigure[]{ {
\includegraphics[width=0.4\textwidth]{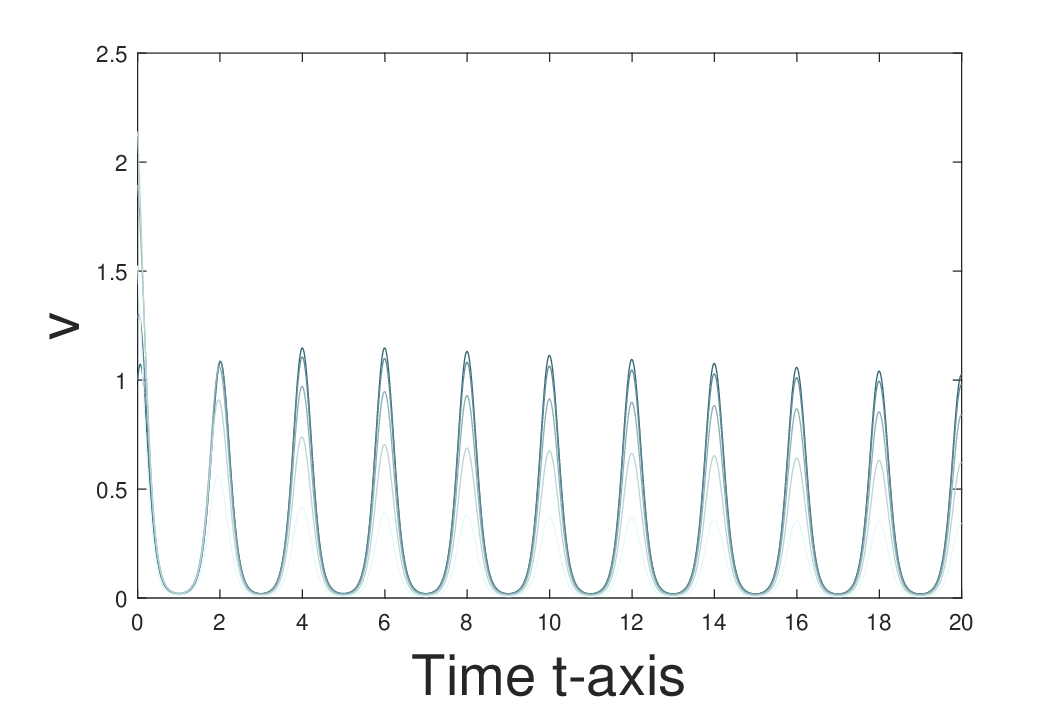}
} }
\renewcommand{\figurename}{\scriptsize Fig.}
\caption{\scriptsize $P(v) = v$ and $\rho (t)={0.75e^{2(1 - \cos \pi t)}}$. Graphs (a)-(f) show that the variable $v$  ( and $u$ omitted by simplification)  tend to a positive periodic steady state. Graphs (b) and (e) clearly illustrate the periodic variation of the domain ${\Omega _t}$ over time.}
\end{figure}

In this scenario where $P(v) = v$ and $\rho (t)={0.75e^{2(1 - \cos 0.5\pi t)}}$ (so that only domain evolution is present), calculation based on Case \ref{case5.1} yields ${\lambda _1} \le  - 0.093 < 0$. According to Theorem \ref{thm1.3}, the solution $(u,v)$ of problem (\ref{a08}) converges to a positive $\tau$-periodic solution of the steady state problem (\ref{a09}), which corresponds to the behavior shown in Fig.\,4(a-f). The contrast between Figs.\,2 and 4 reveals that enlarging the habitat region can turn an otherwise extinct infection into an endemic. In other words, a larger scaling factor hinders disease eradication.

Example \ref{exm5.1} corresponds to the scenario where impulsive control and domain variation are applied after the variables $u$ and $v$ have approached zero. We next investigate how these factors influence the subsequent dynamics once $u$ and $v$ have approached a positive equilibrium.

\begin{exm}\label{exm5.2} Fix $d_H=0.75,d_V=0.7,b=0.15$ and, $m_V=0.4$. The impulsive function and the scaling factor are selected in three combinations: $P(v) = v,\rho (t) \equiv 1$; $P(v) =\frac{{9.9v}}{10+v},\rho (t) \equiv 1$; and $P(v) = v,\rho (t)={0.75e^{-0.16(1 - \cos \pi t)}}$.
\end{exm}

\begin{figure}[htbp]
\centering
\subfigure[]{ {
\includegraphics[width=0.4\textwidth]{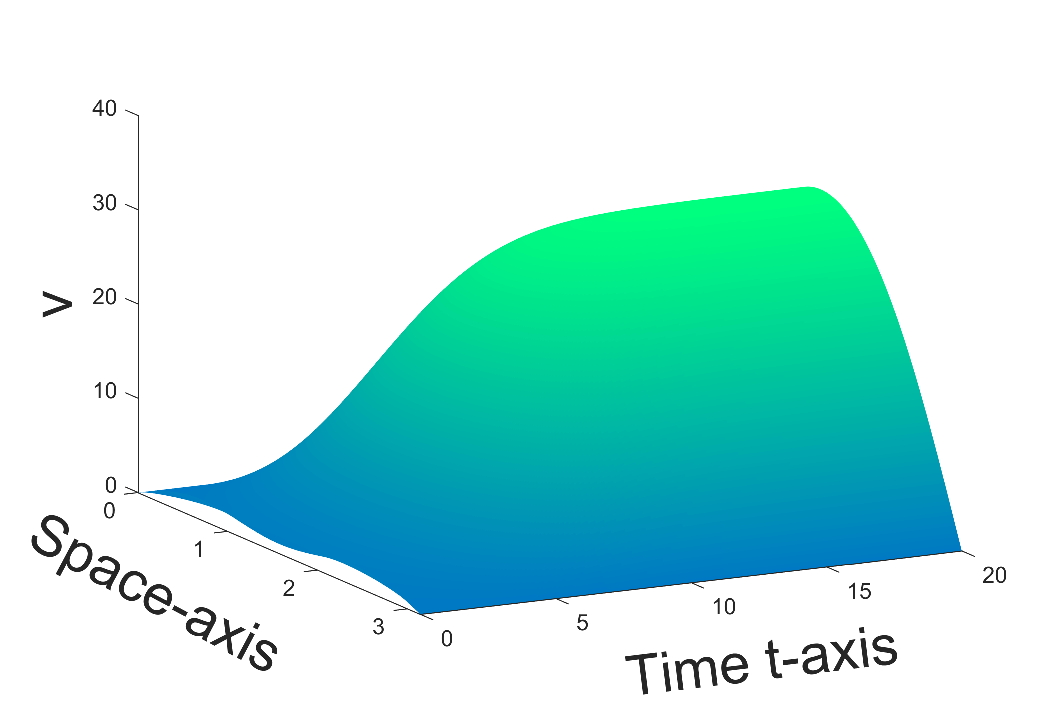}
} }
\subfigure[]{ {
\includegraphics[width=0.4\textwidth]{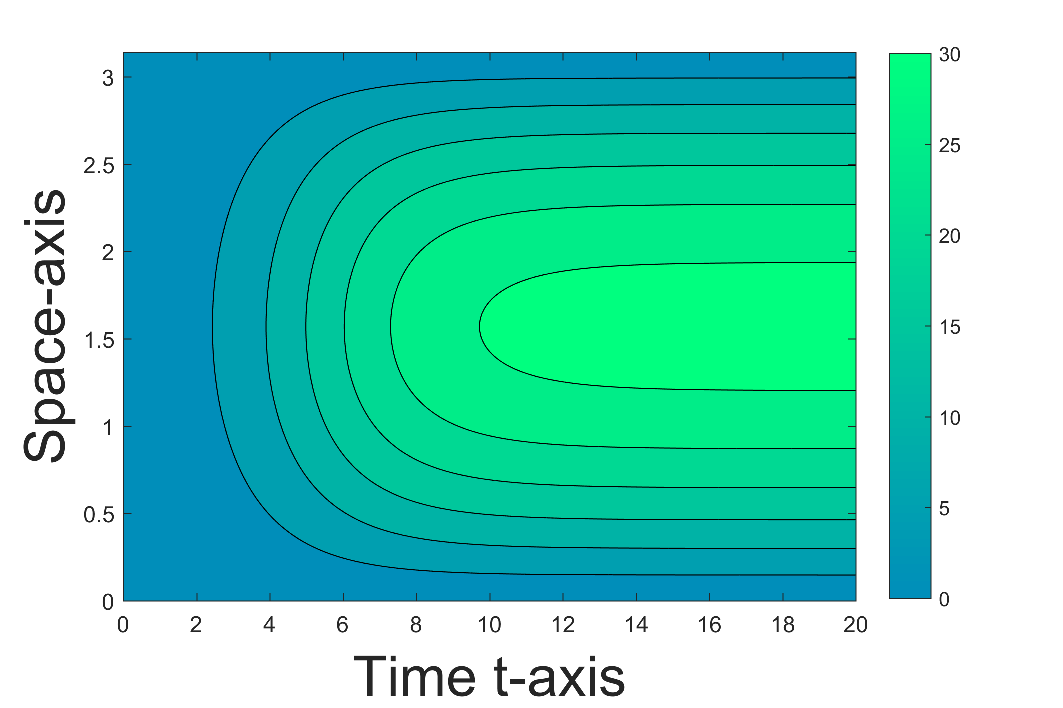}
} }
\subfigure[]{ {
\includegraphics[width=0.4\textwidth]{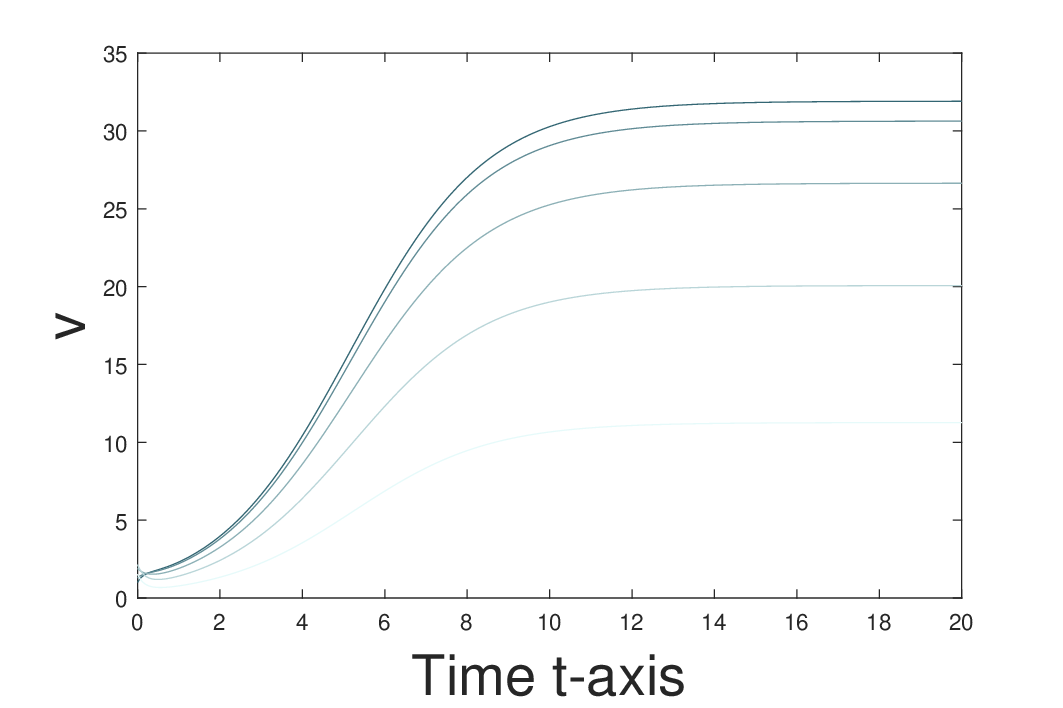}
} }
\renewcommand{\figurename}{\scriptsize Fig.}
\caption{\scriptsize $P(v) = v$ and $\rho (t) \equiv 1$. As shown in graphs (a)-(f), the variable $v$  ( and $u$ omitted by simplification) tend to a positive steady state.}
\end{figure}

Theorem \ref{thm1.3} shows that when $\lambda_1 =-0.625< 0$, the solution of problem (\ref{a08}) converges to a positive periodic steady state for any initial value, which corresponds to the results shown in Fig.\,5(a)-(f). A comparison between Figs.\,2 and 5 reveals that increasing the mosquito biting rate while decreasing its mortality rate can cause the dengue fever to transition from dying out to sustained transmission.

\begin{figure}[htbp]
\centering
\subfigure[]{ {
\includegraphics[width=0.4\textwidth]{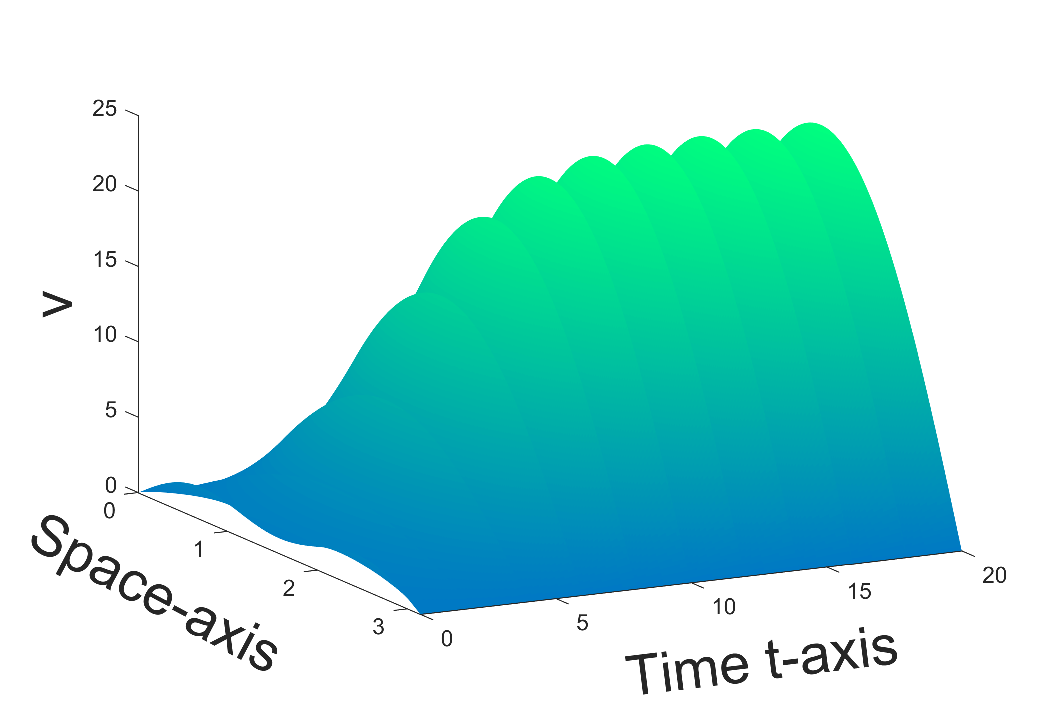}
} }
\subfigure[]{ {
\includegraphics[width=0.4\textwidth]{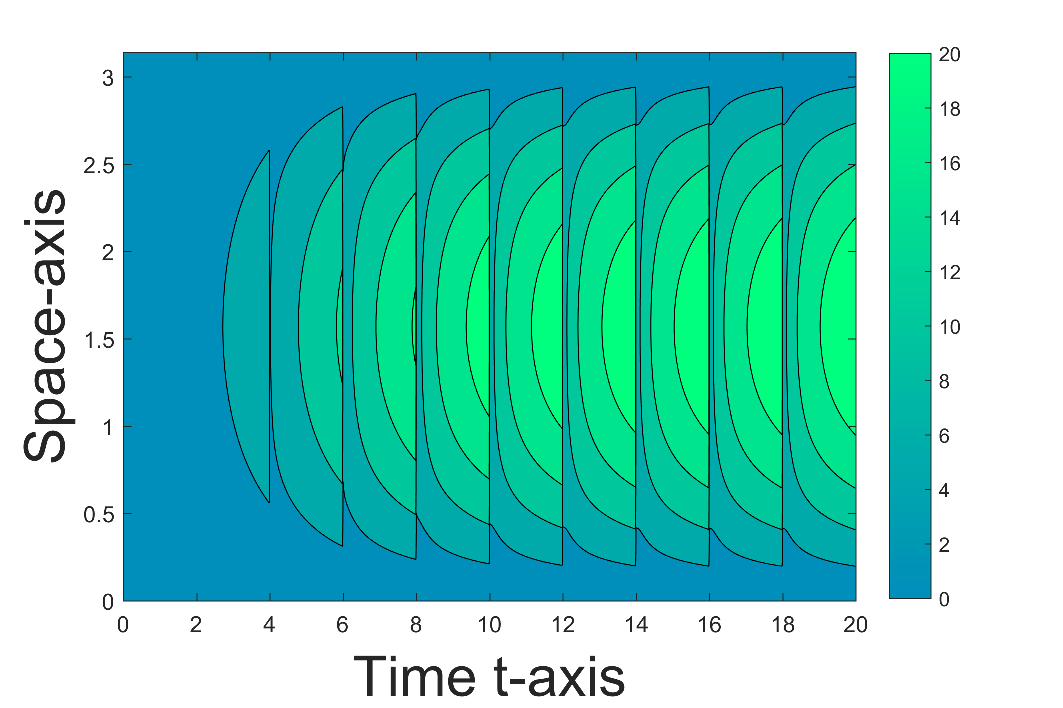}
} }
\subfigure[]{ {
\includegraphics[width=0.4\textwidth]{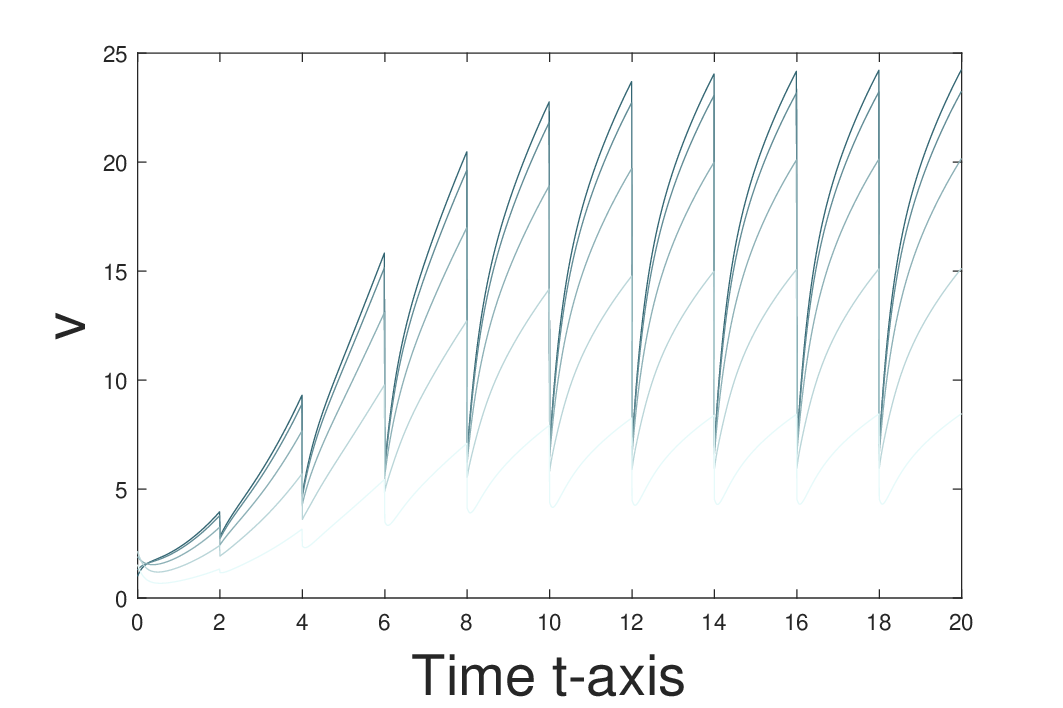}
} }
\renewcommand{\figurename}{\scriptsize Fig.}
\caption{\scriptsize $P(v) = \frac{{9.9v}}{{v + 10}}$ and $\rho (t) \equiv 1$. Graphs (a)-(f) indicate that the variable $v$  ( and $u$ omitted by simplification)  eventually converge to a positive periodic steady state.}
\end{figure}

From the continuity of the principal eigenvalue ${\lambda _1}$ and case \ref{case5.3}, it follows that ${\lambda _1}(0.99) \approx {\lambda _1}(1) \approx  - 0.625 < 0$. Fig.\,6(a)-(f) provide numerical support for Theorem \ref{thm1.3}. While both $u$ and $v$ tend to a positive periodic equilibrium, their densities are substantially lower, as seen by comparing Figs.\,5 and 6. This indicates that impulsive control has a positive effect on containing the infection.

\begin{figure}[htbp]
\centering
\subfigure[]{ {
\includegraphics[width=0.4\textwidth]{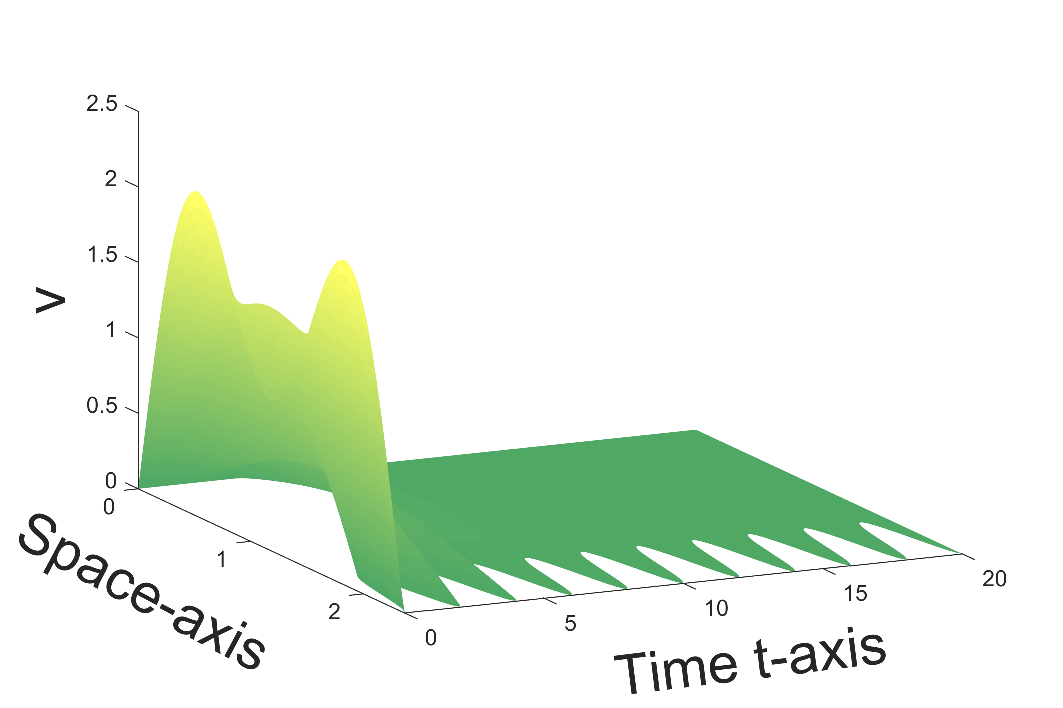}
} }
\subfigure[]{ {
\includegraphics[width=0.4\textwidth]{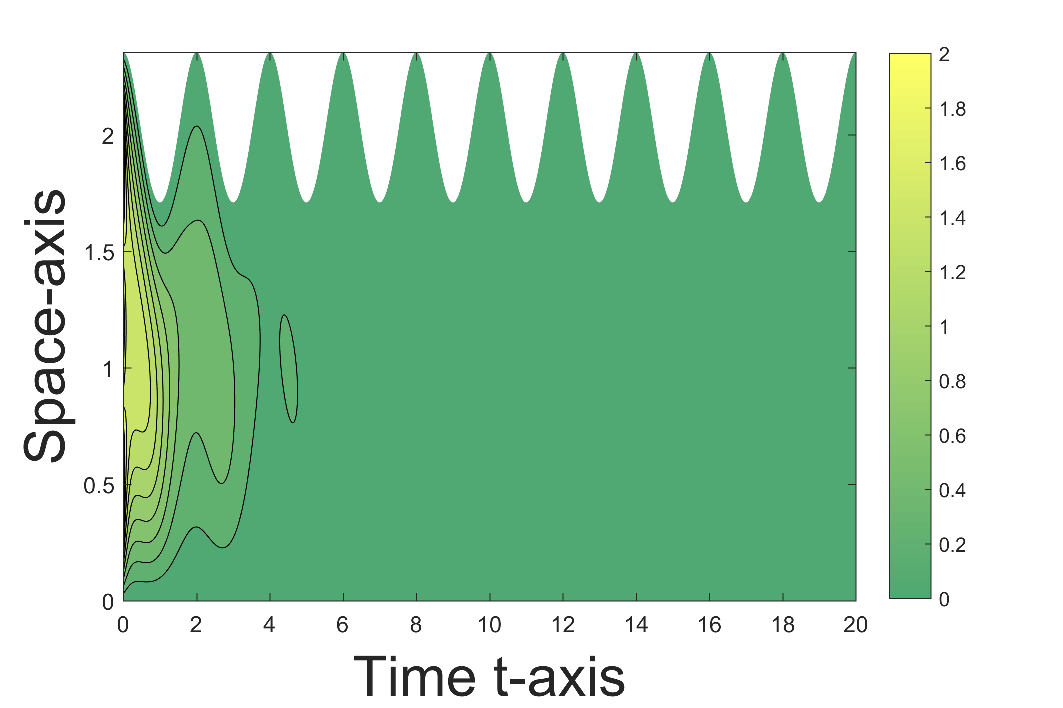}
} }
\subfigure[]{ {
\includegraphics[width=0.4\textwidth]{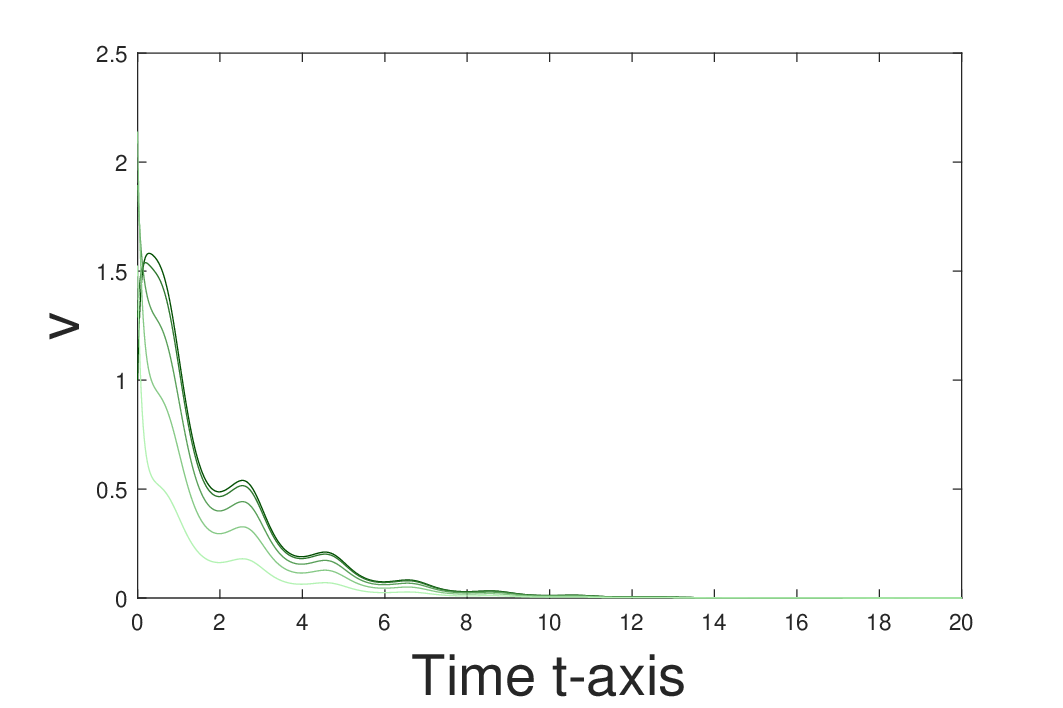}
} }
\renewcommand{\figurename}{\scriptsize Fig.}
\caption{\scriptsize $P(v) = v$ and $\rho (t)={0.75e^{-0.16(1 - \cos \pi t)}}$. Graphs (a)-(f) indicate that the variable $v$  ( and $u$ omitted by simplification) eventually converge to zero.}
\end{figure}
Based on Case \ref{case5.2}(ii), we obtain ${\lambda _1} \ge 0.008 > 0$, which implies that the solution of problem (\ref{a08}) tends to zero. This is supported by the numerical results shown in Fig.\,7(a)-(f). From the contrast between Figs.\,5 and 7, it can be seen that the epidemic, which was initially expanding, disappears after the implementation of a small scaling factor. Consequently, a small scaling factor can rapidly eliminate the spread of dengue fever.

\subsection{Discussion}
As is widely recognized, the periodic variation of climate constitutes an inherent objective law in the natural world. Under such influences, the habitat ranges of mosquitoes and their hosts likewise undergo periodic changes. Impulsive strategies, such as vaccination and regular mosquito suppression, directly affect host and vector populations, thereby altering their abundances or infection states in the short term. To address this, we develop a dengue fever model to investigate the effects of these factors on disease transmission. The initial problem is transformed into a fixed-domain model to facilitate theoretical analysis. However, the introduction of impulses and domain evolution significantly increases the difficulty in analyzing the existence of the principal eigenvalue ${\lambda _1}$ for such periodic eigenvalue problems. Fortunately, we utilize the Poincar$\acute{e}$ map and the Krein-Rutman theorem to resolve this problem. This result is presented in Theorem \ref{thm3.1}.

The theoretical results indicate that if $\lambda _1 <0$, then the solution $(u(y,t),v(y,t))$ to problem (\ref{a08}) converges to the periodic solution $(U(y,t),V(y,t))$ of the corresponding periodic problem (\ref{a09}), which implies that dengue infections will persist in periodic outbreaks. If $\lambda _1 \ge 0$, then $(u(y,t),v(y,t))$ eventually converges to $(0,0)$, that is, the disease will be completely eradicated. It should be noted that previous studies (e.g.,\cite{zq2025,xhy2023}) focus predominantly on the case of $\lambda _1 >0$, while this paper further examines the critical case of $\lambda _1 =0$ in Theorem \ref{thm1.1}.

Numerical simulations presented in Examples \ref{exm5.1} and \ref{exm5.2} illustrate the impacts of impulsive control and domain evolution on dengue transmission dynamics. Specifically, impulsive interventions always positively suppress disease transmission, with their inhibitory effect strengthening as the impulsive intensity $1-P'(0)$ grows. Meanwhile, the scaling factor $\rho (t)$ has a dual role: a relatively small $\overline {{\rho ^{ - 2}}}  =\frac{1}{ \tau }\int_0^\tau  {{\rho ^{ - 2}}(t)} dt$ aggravates disease transmission; conversely, a relatively large $\overline {{\rho ^{ - 2}}}$ helps inhibit epidemic spread. Since the eigenvalue problem incorporates the impulsive term $P'(0)$, however, we cannot derive an explicit expression for the principal eigenvalue $\lambda _1$ and can only obtain estimates under special cases. Consequently, addressing this issue in more complex dynamical models represents an important direction for future research.

\section*{Conflict of interest}
The authors declare there is no conflict of interest.


\begin{thebibliography}{99}
{
\bibitem{Sharif2024}
N. Sharif, S.K. Dey, Infectious Diseases and Global Health Inequity, Springer, New York, 2024.

\bibitem{Dengue2025}
Dengue, https://www.who.int/news-room/fact-sheets/detail/dengue-and-severe-dengue.

\bibitem{Dengue2023-1}
Dengue: The Region of the Americas, https://www.who.int/emergencies/disease-outbreak -news/item/2023-DON475.

\bibitem{Dengue2023}
Dengue: Global situation, https://www.who.int/emergencies/disease-outbreak-news/item/ 2023-DON498.

\bibitem{WHO2026}
World Health Organization: Global Dengue Surveillance Dashboard [Internet], https:// worldhealthorg.shinyapps.io/dengue$\_$global/.

\bibitem{Li2018}
M.Y. Li, An Introduction to Mathematical Modeling of Infectious Diseases, Springer, New York, 2018.

\bibitem{Fischer1970}
D.B. Fischer, S.B. Halstead, Observations related to pathogenesis of dengue hemorrhagic fever. V. Examination of agspecific sequential infection rates using a mathematical model, \textit{J. Biol. Med.}, 1970, 42(5): 329-349.

\bibitem{FengZhilan1997}
Z.L. Feng, J.X. Velasco-Hern$\acute{a}$ndez, Competitive exclusion in a vector-host model for the dengue fever, \textit{J. Math. Biol.}, 1997, 35(5): 523-544.

\bibitem{Coutinho2005}
F.A.B. Coutinho, M.N. Burattini, L.F. Lopez, E. Massad, An approximate threshold condition for non-autonomous system: an application to a vector-borne infection, \textit{Math. Comput. Simulation}, 2005, 70(3): 149-158.

\bibitem{wangwendi2011}
W.D. Wang, X.Q. Zhao, A nonlocal and time-delayed reaction-diffusion model of dengue transmission, \textit{SIAM J. Appl. Math.}, 2011, 71(1): 147-168.

\bibitem{Tewa2009}
J.J. Tewa, J.L. Dimi, S. Bowong, Lyapunov functions for a dengue disease transmission model, \textit{Chaos Solitons Fractals}, 2009, 39(2): 936-941.

\bibitem{zhumin2018}
M. Zhu, Z.G. Lin, Q.Y. Zhang, Coexistence of a cross-diffusive dengue fever model in a heterogeneous environment, \textit{Comput. Math. Appl.}, 2018, 75(3): 1004-1015.

\bibitem{Zitko2014}
T. $\check{Z}$itko, E. Merdi$\acute{c}$, Seasonal and spatial oviposition activity of Aedes albopictus (Diptera: Culicidae) in Adriatic Croatia, \textit{J. Med. Entomol.}, 2014, 51(4): 760-768.

\bibitem{liyuepeng2023}
Y.P. Li, Q. An, Z. Sun, X. Gao, H.B. Wang, Distribution areas and monthly dynamic distribution changes of three Aedes species in China: Aedes aegypti, Aedes albopictus and Aedes vexans, \textit{Parasites $\&$ Vectors}, 2023, 16: 297.

\bibitem{zm2019}
M. Zhu, Y. Xu, J.D. Cao, The asymptotic profile of a dengue fever model on a periodically evolving domain, \textit{Appl. Math. Comput.}, 2019, 362: 124531.

\bibitem{Wangjie2025}
J. Wang, P.Y. Song, S.M. Wang, Y. Zhang, Threshold propagation in a May-Nowak type degenerate reaction-diffusion viral model on periodically evolving domain, \textit{Discrete Contin. Dyn. Syst. Ser. B}, 2025, 30(4): 1415-1440.

\bibitem{Adam2019}
B. Adam, Z.G. Lin, A.K. Tarboush, Asymptotic profile of a mutualistic model on a periodically evolving domain, \textit{Int. J. Biomath.}, 2019, 12(7): 1950078.

\bibitem{Fangjian2024}
J. Fang, Y.F. Li, Y. Su, Population dynamics on periodically evolving domain with periodic growth mechanisms, \textit{SIAM J. Appl. Math.}, 2024, 84(6): 2219-2237.

\bibitem{zhanghan2025}
H. Zhang, M. Zhu, Population dynamics of a logistic model incorporating harvesting pulses on a growing domain, \textit{Commun. Nonlinear Sci. Numer. Simul.}, 2025, 146: 108768.

\bibitem{Wijaya2021}
K.P. Wijaya, J. P$\acute{a}$ez Ch$\acute{a}$vez, T. G$\ddot o$tz, A dengue epidemic model highlighting vertical-sexual transmission and impulsive control strategies, \textit{Appl. Math. Model.}, 2021, 95: 279-296.

\bibitem{Lewis2012}
M.A. Lewis, B.T. Li, Spreading speed, traveling waves, and minimal domain size in impulsive reaction-diffusion models, \textit{Bull. Math. Biol.}, 2012, 74(10): 2383-2402.

\bibitem{Nie2013}
L.F. Nie, Z.D. Teng, B.Z. Guo, A state dependent pulse control strategy for a SIRS epidemic system, \textit{Bull. Math. Biol.}, 2013, 75(10), 1697-1715.

\bibitem{Fazly2017}
M. Fazly, M. Lewis, H. Wang, On impulsive reaction-diffusion models in higher dimensions, \textit{SIAM J. Appl. Math.}, 2017, 77(1): 224-246.

\bibitem{zhangYuron2024}
Y.R. Zhang, T.S. Yi, Y.M Chen, Spreading dynamics of an impulsive reaction-diffusion model with shifting environments, \textit{J. Differential Equations}, 2024, 381: 1-19.

\bibitem{Lu2026}
L. Lu, J.B. Wang, Persistence and spatial propagation of an impulsive integro-differential model with non-local pulse, \textit{J. Math. Biol.}, 2026, 92(1): 14.

\bibitem{Baker2007}
R.E. Baker, P.K. Maini, A mechanism for morphogen-controlled domain growth, \textit{J. Math. Biol.}, 2007, 54: 597-622.

\bibitem{cej1999}
E.J. Crampin, E.A. Gaffney, P.K. Maini, Reaction and diffusion on growing domains: Scenarios for robust pattern formation, \textit{Bull. Math. Biol.}, 1999, 61(6): 1093-1120.

\bibitem{my2021}
Y. Meng, Z.G. Lin, M. Pedersen, Effects of impulsive harvesting and an evolving domain in a diffusive logistic model, \textit{Nonlinearity}, 2021, 34(10): 7005-7029.

\bibitem{wangmingxin2021}
M.X. Wang, Nonlinear Second Order Parabolic Equations, CRC, Boca Raton, 2021.

\bibitem{Ladyzhenskaya1968}
O.A. Ladyzhenskaya, V.A. Solonnikov, N.N. Uraltseva, Linear and quasilinear equations of parabolic type, Academic Press, New York, 1968.

\bibitem{Protter1984}
M.H. Protter, H.F. Weinberger, Maximum Principles in Differential Equations, Springer-Verlag, New York, 1984.

\bibitem{Krein1950}
M.G. Krein, M.A. Rutman, Linear operators leaving invariant a cone in a Banach space, American Mathematical Society, New York, 1950.

\bibitem{pcv2005}
C.V. Pao, Stability and attractivity of periodic solutions of parabolic systems with time delays, \textit{J. Math. Anal. Appl.}, 2005, 304(2): 423-450.

\bibitem{zq2025}
Q. Zhou, Z.G. Lin, C.A. Santos, On an impulsive faecal-oral model in a periodically evolving environment, \textit{Chaos Solitons Fractals}, 2025, 191: 115825.

\bibitem{xhy2023}
H.Y. Xu, Z.G. Lin, C.A. Santos, Spatial dynamics of a juvenile-adult model with impulsive harvesting and evolving domain, \textit{Commun. Nonlinear Sci. Numer. Simul.}, 2023, 122: 107262.

}
\end{thebibliography}
\end{document}